\documentclass{lmcs}
\pdfoutput=1

\usepackage[utf8]{inputenc}
\usepackage{lastpage}
\lmcsdoi{17}{3}{4}
\lmcsheading{}{\pageref{LastPage}}{}{}%
{Nov.~21,~2019}{Jul.~20,~2021}{}

\usepackage{mathtools, amssymb, amsfonts, amsthm, enumitem, mathdots, enumitem, pdfsync}
\usepackage{epsfig, psfrag, color, multicol, graphicx, tikz, bm}
\usepackage{amsfonts}

\usepackage{bookmark}

\theoremstyle{plain}
\theoremstyle{thmC}
\newtheorem{factC}[thm]{Fact}

\theoremstyle{definition}

\numberwithin{equation}{section}


\def\sq{\square}

\def\zz{\mathbb Z}

\def\rr{\mathbb R}
\def\qqq{\mathbb Q}

\def\ga{\gamma}

\def\al{\alpha}
\def\be{\beta}

\def\T{\mathbf{T}}

\def\wt{\widetilde}
\def\<{\langle}
\def\>{\rangle}

\def\Z{ {\text {\rm Z} } }

\def\Q{{\text {\rm Q} } }

\def\0{{\mathbf 0}}

\def\NN{{\mathbb N}}

\def\.{\hskip.06cm}
\def\ts{\hskip.03cm}

\def\poly{\textup{\textsf{P}}}
\def\pspace{\textup{\textsf{PSPACE}}}
\def\PSPACE{\textup{\textsf{PSPACE}}}
\def\SigmaP{\Sigma^{\poly}}


\def\Z{\mathbb{Z}}
\def\R{\mathbb{R}}
\def\N{\mathbb{N}}
\def\Q{\mathbb{Q}}

\def\T{\bm T}

\newcommand{\cj}[1]{\overline{#1}}

\renewcommand{\b}{\cj{b}}

\def\a{\cj{a}}
\def\c{\cj{c}}
\def\d{\cj{d}}

\newcommand{\x}{\mathbf{x}}
\renewcommand{\t}{\mathbf{t}}

\def\w{\mathbf{w}}

\newcommand{\Qrc}{\mathbb{Q}_\textsf{\textup{alg}}}

\newcommand{\y}{\mathbf{y}}
\newcommand{\z}{\mathbf{z}}

\newcommand{\polyin}{\textup{poly}}

\newcommand{\ex}{\exists\ts}
\renewcommand{\for}{\forall\ts}

\def\nin{\noindent}

\renewcommand\L{\mathcal{L}}

\def\NP{{\textup{\textsf{NP}}}}

\def\polyH{\textup{\textsf{PH}}}

\def\s{\mathbf{s}}

\renewcommand{\mod}[1]{
\;\, (\textup{mod} \; #1)
}

\def\cal{\mathcal}

\def\phi{\varphi}

\def\markerblock{[\xsymb, \xsymb]}
\def\when{\quad \text{if} \quad}
\def\and{\quad \text{and} \quad}
\def\s{{\bf s}}
\def\lcm{\textup{lcm}}

\def\B{{\mathcal B}}
\def\M{{\bf M}}

\newcommand*{\bfrac}[2]{\begin{pmatrix} #1 \\ #2 \end{pmatrix}}
\def\Shift{\textup{Shift}}

\def\Ost{\textup{Ost}}

\def\C{\textbf{C}}
\def\Conv{\textbf{C}_{\for}}
\def\ConvK{\textbf{D}^{K}_{\for}}
\def\ConvM{\textbf{D}^{M}_{\for}}
\def\After{\textbf{After}}

\def\ZeroOne{\textup{\bf ZeroOne}_{\for\ex}}
\def\Pref{\textbf{Pref}_{\for\ex}}
\def\Read{\textup{\bf Read}_{\ex}}
\def\Next{\textup{\bf Next}_{\ex}}
\def\Tran{\textbf{Tran}_{\ex}}
\def\End{\textbf{E}_{\ex\for}}
\def\Alph{{\bf \Sigma}}
\def\States{{\bf Q}}

\def\Comp{\textbf{Compatible}}
\def\Cons{\textbf{Consec}}

\def\Best{\textbf{\bf Best}}
\def\Adm{\textbf{\bf Admissible}}
\def\Member{\textbf{\bf Member}}

\newcommand{\encode}[1]{\langle #1 \rangle}
\def\M{\mathcal{M}}
\def\Tape{\mathcal{T}}
\def\Transcript{{\bm T}}
\def\xsymb{{\bm \times}}

\newcommand\restr[3]{{
  \left.\kern-\nulldelimiterspace 
  #1 
  \vphantom{\big|} 
  \right|_{#2}^{#3} 
  }}

\def\wedge{\; \land \;}
\def\vee{\; \lor \;}


\title[Presburger Arithmetic with algebraic scalar multiplications]{Presburger Arithmetic with algebraic \\
scalar multiplications}

\author[P.~Hieronymi]{Philipp Hieronymi\rsuper{a}}
\address{\lsuper{a}Department of Mathematics\\
University of Illinois at Urbana-Champaign\\ 
Urbana, IL, 61801\\
USA}
\email{phierony@illinois.edu}

\author[D.~Nguyen]{Danny Nguyen\rsuper{b}}

\author[I.~Pak]{Igor Pak\rsuper{b}}
\address{\lsuper{b}Department of Mathematics\\
UCLA
\\ Los Angeles, CA, 90095\\ 
USA
}
\email{ldnguyen@math.ucla.edu}
\email{pak@math.ucla.edu}

\begin{document}

\maketitle

\begin{abstract}
We consider Presburger arithmetic (PA) extended by scalar multiplication by an algebraic irrational number $\alpha$, and call this extension $\alpha$-Presburger arithmetic ($\alpha$-PA). We show that the complexity of deciding sentences in $\alpha$-PA is substantially harder than in PA. Indeed, when $\al$ is quadratic and $r\geq 4$, deciding $ \alpha$-PA sentences
with $r$ alternating quantifier blocks and at most \ts $c\ts\ts r$ \ts
variables and inequalities requires space at least

$$K\. 2^{2^{ \, \iddots^{ \; 2^{ \, C \ell(S) } }}} \quad
\text{\rm $\bigl(\ts$tower of height \.\ts $r-3\ts\bigr)$},
$$
where the constants \ts $c, \ts K,\ts C>0$ \ts only depend on~$\alpha$, and $\ell(S)$ is the length of the given $\al$-PA sentence $S$. Furthermore deciding \ts $\ex^{6}\for^{4}\ex^{11}$ \ts $\alpha$-PA sentences with at most  $k$ inequalities is $\pspace$-hard, where $k$ is another constant depending only on~$\al$. When $\al$ is non-quadratic, already  four alternating quantifier blocks suffice for undecidability of $\alpha$-PA sentences.
\end{abstract}

\section{Introduction}

\subsection{Main results}
Let $\alpha$ be a real number. An \emph{$\alpha$-Presburger  sentence} (short: an $\al$-PA sentence) is a statement of the form
\begin{equation}\label{eq:sentence}
Q_{1} \x_{1} \in \zz^{n_{1}} \; \dots \; Q_{r} \x_{r} \in \zz^{n_{r}} \;\; \Phi(\x_{1},\dots,\x_{r}),
\end{equation}
where $Q_{1},\dots,Q_{r} \in \{\for,\ex\}$ are $r$ \emph{alternating quantifiers}, $\x_{1},\dots,\x_{r}$ are $r$ blocks of integer variables,
and $\Phi$ is a Boolean combination of linear inequalities in $\x_{1},\dots,\x_{r}$
with coefficients and constant terms in $\Z[\al]$.
As the number~$r$ of alternating quantifier blocks and the dimensions 
$n_1,\ldots,n_r$ increase, the truth of $\al$-PA sentences becomes harder to decide. 
In this paper, we study the computational complexity of deciding $\alpha$-PA 
sentences.

\bigskip

\noindent Sentences of the form ~\eqref{eq:sentence} have nice geometric interpretations in many
special cases.
Assume the formula $\Phi$ is a conjunction of linear equations
and inequalities, then $\Phi$ defines a convex polyhedron~$P$ defined over $\qqq[\al]$.  For $r=1$ and $Q_1=\exists$,
a sentence~of the form ~\eqref{eq:sentence} asks for existence of an integer point in~$P$:
\begin{equation}\label{eq:sentence1}
\exists\. \x \in \. P \cap \zz^{n}.
\end{equation}
In a special case of $r=2$,
$Q_1=\forall$ and $Q_2=\exists$, the sentence~$S$ can ask whether
projections of integer points in a convex polyhedron $P\subseteq \rr^{k+m}$
cover all integer points in another polyhedron $R\subseteq \rr^{k}$:
\begin{equation}\label{eq:sentence2}
\forall\. \x \in R \cap \zz^k  \; \; \exists \. \y \in  \zz^{m} \;\; : \;\; (\x,\y) \in P\ts.
\end{equation}
Here both $P$ and $R$ are defined over $\Q[\al]$.
When $\al$ is rational, the classical problems~\eqref{eq:sentence1} and~\eqref{eq:sentence2} are repectively known as \emph{Integer Programming} and \emph{Parametric Integer Programming} (see eg.~\cite{Schrijver},~\cite{L},~\cite{K1}).
Further variations on the theme and increasing number of quantifiers
allow more general formulas with integer valuations of the polytope algebra. For a survey of this area, see Barvinok~\cite{B2}.

\bigskip

\noindent Recall that classical Presburger arithmetic ($\text{PA}$) is the first-order theory of $(\Z,<,+)$, introduced by Presburger in~\cite{Pres}. When $\alpha\in \Q$, then deciding the truth of $\alpha$-PA sentence is equivalent to deciding whether or not a PA sentence is true. The latter decision problem has been studied extensively and we review some of their results below. The focus of this paper is the case when $\alpha$ is irrational, which is implicitly assumed whenever we mention $\alpha$-PA.

\bigskip

\noindent Let $\al \in \Qrc$, where $\Qrc$ is the field of real algebraic numbers. We think of $\al \in \Qrc$ as being given by its defining $\Z[x]$-polynomial of
degree $d$, with a rational interval to single out a unique root.
We say that $\al\in \Qrc$ is \emph{quadratic} if $d=2$.
Similarly, the elements $\ga \in \Z[\al]$ are represented in the form \ts
$\ga = c_{0} + c_{1}\al + \ldots + c_{d-1}\al^{d-1}$, where \ts $c_{0},\ldots,c_{d-1} \in \Z$.
For example, $\al=\sqrt{2}$ is quadratic and given by \ts
$\{\al^2-2=0, \al>0\}$. Thus \ts $\Z[\sqrt{2}]= \{a + b\ts\sqrt{2}, \, a,b\in \Z\}$. For $\ga \in \Z[\al]$, the \emph{encoding length} $\ell(\ga)$ is the
total bit length of the $c_i$'s defined above.
Similarly, the encoding length $\ell(S)$ of an $\alpha$-PA sentence $S$ is defined to be the total bit length
of all symbols in~$S$, with integer coefficients and constants represented in binary.

\bigskip

\noindent The only existing result that directly relates to the complexity of deciding $\alpha$-PA sentences is the following theorem due to Khachiyan and Porkolab, which extends Lenstra's classical result~\cite{L} on Integer Programming in fixed dimensions.

\begin{thmC}[\cite{KP}]\label{t:alg-IP}
  For every fixed $n$, sentences of the form
  $\ex\y \in \zz^{n} \, : \, A\y \le \b$ with $A \in \Qrc^{m \times n}, \b \in \Qrc^{m}$
  can be decided in polynomial time.
\end{thmC}

\noindent Note that the system \ts $A \y \le \cj b$ \ts in Theorem \ref{t:alg-IP} can involve arbitrary algebraic irrationals. This is a rare positive result on irrational polyhedra. Indeed,  for a non-quadratic~$\al$, this gives the only positive result
on deciding $\alpha$-PA sentences that we know of.

\bigskip

\noindent In this paper we establish that Theorem \ref{t:alg-IP} is an exception and in general deciding $\alpha$-PA sentences is often substantially harder. We first consider the case of $\alpha$ being quadratic. In this situation the theory $\al$-PA is decidable, and in this paper we prove both lower and upper bounds on the complexity of deciding the truth of a given $\al$-PA sentence. In the following theorems, the constants $K,C$ vary from one context to another.

\begin{thm}\label{th:quad-upper}
Let $\al\in \Qrc$ be a quadratic irrational number,
and let $r\ge 1$. An $\alpha$-\textup{PA} sentence $S$ with~$r$
alternating quantifier blocks can be decided in time at most
$$K\. 2^{2^{ \, \iddots^{ \; 2^{ \, C \ts \ell(S) } }}}  \quad
\text{\rm $\bigl(\ts$tower of height \. $r\ts\bigr)$},
 $$
where the constants \ts $K,\ts C>0$ \ts
depend only on $\alpha$.
\end{thm}

\noindent In the opposite direction, we have the following lower bound:

\begin{thm}\label{th:quad_lower}
Let $\al\in \Qrc$ be a quadratic irrational number,
and let $r\ge 4$.  Then deciding $\alpha$-\textup{PA} sentences
with $r$ alternating quantifier blocks and at most \ts $c\ts\ts r$ \ts
variables and inequalities requires space at least:
$$K\. 2^{2^{ \, \iddots^{ \; 2^{ \, C \ts \ell(S) } }}}  \quad
\text{\rm $\bigl(\ts$tower of height \.\ts $r-3\ts\bigr)$},
$$
where the constants \ts $c, \ts K,\ts C>0$ \ts only depend on~$\alpha$.
\end{thm}

\noindent These results should be compared with the triply exponential
upper bound and doubly exponential lower bounds for~PA (discussed in Section~\ref{sec:prev}).
In the borderline case of  $r=3$, the problem is especially
interesting.
We give the following lower bound, which only needs a few variables:

\begin{thm}\label{th:quad-pspace}
Let $\al\in \Qrc$ be a quadratic irrational number.
Then deciding \ts $\ex^{6}\for^{4}\ex^{11}$ \ts $\alpha$-\textup{PA} sentences with at most $K$ inequalities is $\pspace$-hard,
where the constant $K$ depends only on~$\al$.  Furthermore,
for $\al=\sqrt{2}$, one can take $K=10^6$.
\end{thm}

\nin
This should be compared with Gr\"{a}del's theorem on
$\SigmaP_{2}$-completeness for $\ex^{*}\for^{*}\ex^{*}$ integer sentences
in~PA (also discussed in Section~\ref{sec:prev}). The sudden jump from the polynomial hierarchy in PA is due to the power of irrational quadratics.
Specifically, any irrational quadratic $\al$ has an infinite periodic continued fraction, and this allows us to work with Ostrowski representations of integers in base $\al$. Because of this we are able to code string relations such as shifts, suffix/prefix and subset, which were not at all possible to define in PA.
Such operations are rich enough to encode arbitrary automata computation, and in fact Turing Machine computation in bounded space. Section~\ref{sec:outline} further discusses our method.

\bigskip

\noindent The situation is even worse when $\alpha$ is a non-quadratic irrational number. In this case $\al$-PA is undecidable and we prove here that just four alternating quantifier blocks are enough.

\begin{thm}\label{th:nonquad-undec}
  Let $\al \in \Qrc$ be a non-quadratic irrational number.
  Then \ts $\ex^{k}\for^{k}\ex^{k}\for^{k}$ $\alpha$-\textup{PA} sentences are undecidable, where $k=20000$.
\end{thm}

\subsection{Previous results}\label{sec:prev}
In this paper, we build on earlier related works on certain expansions of the real ordered additive group.
Let $\mathcal{S}_{\al} \coloneqq (\R, <, +,\zz, x \mapsto \al x)$. Denote by $T_{\al}$ the first-order theory of $\mathcal{S}_{\al}$; this is the first-order theory of the real numbers viewed as a $\qqq(\alpha)$-vector space with a predicate for the set of the integers. This theory is an extension of Presburger Arithmetic, and it is not hard to see that $T_{\al}$ contains an $\alpha$-PA sentence if only if the $\alpha$-PA sentence is true. 

\bigskip

\noindent
It has long been known that $T_{\al}$ is decidable when $\al$ is rational. This is arguably due to Skolem \cite{skolem} and was later
rediscovered independently by Weispfenning \cite{weis} and Miller~\cite{ivp}. More recently, Hieronymi~\cite{H} proved that $T_{\alpha}$ is still decidable when
$\al$ is quadratic. Thus it was known that checking $\alpha$-PA sentences is decidable for quadratic $\alpha$. 

\bigskip

\noindent
When $\alpha$ is irrational non-quadratic, the theory $T_{\alpha}$ is undecidable by Hieronymi and Tychonievich~\cite{HT-Proj}. This result itself does not imply undecidability of $\alpha$-PA sentences in this case, as $T_{\alpha}$ is a proper extension of $\alpha$-PA. However, the undecidability of $\alpha$-PA can be obtained by a careful analysis of the proof in ~\cite{HT-Proj}. In Theorem~\ref{th:nonquad-undec}, we not only give an explicit proof of this result, but also precisely quantify this result by showing that four alternating quantifier blocks are enough for undecidability. While our argument is based on the ideas in~\cite{HT-Proj}, substantial extra work is necessary to reduce the number of alternations to four from the lower bound implicit in the proofs in \cite{HT-Proj}.

\bigskip

\noindent While this paper is the first systematic study of the complexity of decision problems related to $\al$-PA, there is a large body of work for Presburger arithmetic. A quantifier elimination algorithm for PA was given by
Cooper~\cite{C} to effectively solve the decision problem.
Oppen~\cite{Oppen} showed that such sentences can be decided in at most triply
exponential time (see also~\cite{RL}).  In the opposite direction,
a nondeterministic doubly exponential lower bound was obtained
by Fischer and Rabin~\cite{FR} (see also~\cite{Wei}). We also refer the reader to Berman~\cite{Ber} for the precise complexity of Presburger arithmetic.
As one restricts the number of alternations, the complexity of PA drops by roughly one exponent (see~\cite{Fur,Sca,RL}), but still remains exponential.

\bigskip

\noindent For a bounded number of variables, two important cases are known to be polynomial time decidable,
namely the analogues of~\eqref{eq:sentence1} and~\eqref{eq:sentence2} with rational
polyhedra $P$ and~$R$.  These are classical results by Lenstra~\cite{L}
and Kannan~\cite{K1}, respectively.  Scarpellini~\cite{Sca} showed that all
$\ex^{n}$-sentences are still polynomial time decidable for every $n$ fixed.
However, for two alternating quantifiers, Sch\"{o}ning proved in~\cite{Sch} that
deciding $\ts\exists y \ts \forall x\ts:\ts\Phi(x,y)$ is \NP-complete.
Here $\Phi$ is any Boolean combination of linear inequalities in two variables,
instead of those in the particular form~\eqref{eq:sentence2}.
This improved an earlier result by Gr\"{a}del in~\cite{Gra}, who also showed
that PA sentences with $m+1$ alternating quantifier blocks and $m+5$ variables
are complete for the $m$-th level in the Polynomial Hierarchy~\polyH.
In these results, the number of inequalities (atoms) in $\Phi$ is still part of the input, i.e., allowed to vary.

\bigskip

\noindent Much of the recent work concerns the most restricted PA sentences for which the number of alternations ($r+2$),  number of variables \emph{and} number
of inequalities in $\Phi$ are all fixed.
Thus, the input is essentially a bounded list of integer coefficients
and constants in~$\Phi$, encoded in binary.
For $r=0$, such sentences are polynomial time decidable by Woods~\cite{Woods}.
For $r=1$, Nguyen and Pak~\cite{NP} showed that deciding \ts $\exists^1\forall^2\exists^2$ \ts
PA-sentences with at most $10$ inequalities is \NP-complete.
More generally, they showed that such sentences with $r+2$ alternations, $O(r)$ variables and inequalities are complete for the $r$-th level in~\polyH.
Thus, limiting the ``format'' of a
PA formula does not reduce the complexity by a lot.  This is our main motivation
for the lower bounds in Theorems~\ref{th:quad_lower} and~\ref{th:quad-pspace}
for $\alpha$-PA sentences.

\subsection{Proofs outline}\label{sec:outline}
Let \textup{S1S} be the monadic second order theory of $(\N,+1)$, where $+1$ denotes the usual successor function, and let \textup{WS1S} be the weak monadic second order theory of $(\N,+1)$, that is the monadic second order logic of $(\N,+1)$ in which quantification over sets is restricted to quantification over finite subsets. The main results of \cite{H} states that for quadratic $\alpha$, one can decide $T_{\alpha}$-sentences by translating them into corresponding \textup{S1S}-sentences, and then decide the latter. Since $\alpha$-PA sentences form a subset of all $T_{\alpha}$-sentences, this method can be used to decide $\alpha$-PA sentences. Thus, upper complexity bounds for \textup{S1S} can theoretically be transferred to deciding $\alpha$-PA sentences. Moreover, the work in \cite{H} also shows that one can translate \textup{S1S}-sentences into $T_{\alpha}$-sentences. However, no efficient direct translation between $\cal{L}_{\alpha}$-sentence and \textup{S1S}-sentence was given in~\cite{H2,H}.
Ideally, one would like to do this translation with as few extra alternations of quantifiers as possible.
In Theorems~\ref{th:quad-upper} and~\ref{th:quad_lower}, we explicitly quantify this translation. We strengthen the result from \cite{H} by showing that one can translate $\al$-PA sentences to \textup{WS1S}-sentences. The translation then allows to us find upper and lower complexity bounds for deciding $\alpha$-PA sentences.

\bigskip

\noindent The most powerful feature of $\alpha$-PA sentences is that we can talk about Ostrowski representation of integers, which will be used throughout the paper as the main encoding tool. We first obtain the upper bound in Theorem~\ref{th:quad-upper} by directly translating $\al$-PA sentences into the statements about automata using Ostrowski encoding and using known upper bounds for certain decision problems about automata. Next, we show the lower bound for three alternating quantifiers
(Theorem~\ref{th:quad-pspace}) by a general argument on the
Halting Problem with polynomial space constraint, again using Ostrowski encoding. We generalize the above argument to get a lower bound for $r \ge 3$ alternating quantifier blocks (Theorem~\ref{th:quad_lower}). For the latter result, we first translate \textup{WS1S}-sentences to $\alpha$-PA sentences with only one extra alternation,
and then invoke a known tower lower bound for \textup{WS1S}. Finally in the proof of Theorem \ref{th:nonquad-undec}, we again use the expressibility of Ostrowski representation to reduce the upper bound of the number of  alternating quantifier blocks needed for undecidability of $\alpha$-PA sentences. The use of Ostrowski representations allows us to replace more general arguments from \cite{HT-Proj} by explicit computations, and thereby reduce the quantifier-complexity of certain $\alpha$-PA sentences.

\subsection*{Notation}
We use bold notation like $\x,\y$ to indicate vectors of variables.

\section{Continued fractions and Ostrowski representation}\label{sec:ctn_frac}

Ostrowski representation and continued fractions play a crucial role throughout this paper. 
We recall basic definitions and facts in this subsection. We refer the reader to 
Rockett and Sz\"usz~\cite{RS} for more details and proofs.

\medskip

\noindent A \emph{finite continued fraction expansion} $[a_0;a_1,\dots,a_k]$ is an expression of the form
\[
a_0 + \frac{1}{a_1 + \frac{1}{a_2+ \frac{1}{\ddots +  \frac{1}{a_k}}}}
\]
For a real number $\alpha$, we say $[a_0;a_1,\dots,a_k,\dots]$ is \emph{the continued fraction expansion of $\alpha$} if $\alpha=\lim_{k\to \infty}[a_0;a_1,\dots,a_k]$ and $a_0\in \Z$, $a_i\in \N_{>0}$ for $i>0$. For the rest of this subsection, fix a positive irrational real number $\alpha$ and let $[a_0;a_1,a_2,\dots]$ be the continued fraction expansion of $\alpha$.

\medskip

\noindent Let $k\geq 1$. A quotient $p_k/q_k \in \Q$ is said to be the \emph{$k$-th convergent of $\alpha$} if $p_k\in \N$, $q_k\in \Z$, $\gcd(p_k,q_k)=1$ and
\[
\frac{p_k}{q_k} \. = \. [a_0;a_1,\dots,a_k]\ts.
\]
It is well known that the convergents of $\al$ follow the recurrence relation:
\begin{equation}\label{eq:rec}
\gathered
(p_{-1},q_{-1}) \. = \. (1,0);\; (p_{0},q_{0}) \. = \. (a_{0},1);\; \\
p_{n} = a_{n}p_{n-1} + p_{n-2},\; q_{n} = a_{n}q_{n-1} + q_{n-2} \quad\text{for } n \ge 1.
\endgathered
\end{equation}
This can be written as:
\begin{equation}\label{eq:matrix_rec}
 \begin{pmatrix}
    p_{n} & p_{n-1} \\
    q_{n} & q_{n-1}
 \end{pmatrix} = \Gamma_{0} \cdots \Gamma_{n} \  , \ \quad
 \text{where} \quad \Gamma_{i} =
 \begin{pmatrix}
  a_{i} & 1\\
  1 & 0
\end{pmatrix}.
\end{equation}

\begin{factC}[{\cite[Chapter II.2 Theorem 2]{RS}}]\label{bestrational} The set of best rational approximations of $\alpha$ is precisely the set of all convergents $\{p_{k}/q_{k}\}$ of $\alpha$.
In other words, for every $p_{k}/q_{k}$, we have:
\begin{equation}\label{eq:conv1}
\for x,y \in \Z \;\; (0 < y < q_k) \to |y \al -x| >  |q_k \al - p_{k}|.
\end{equation}
\end{factC}

\noindent
The \emph{$k$-th difference of $\alpha$} is defined as $\beta_k := q_k \alpha - p_k$. We use the following properties of the $k$-th difference:
\begin{equation}\label{eq:odd_even}
\be_{n} > 0 \;\;\text{if}\;\; 2|n,\quad \be_{n} < 0 \;\;\text{if}\;\; 2\nmid n.
\end{equation}
\begin{equation}\label{eq:beta_dec}
\be_{0} \; > \; -\be_{1} \; > \; \be_{2} \; > \; -\be_{3} \; > \; \dots
\end{equation}
\begin{equation}\label{eq:sum_beta}
-\be_{n} \; = \; a_{n+2} \be_{n+1} + a_{n+4} \be_{n+3} + a_{n+6} \be_{n+5} + \dots \quad \for n \in \N.
\end{equation}
These can be easily proved using~\eqref{eq:rec}. We now introduce a class of numeration systems introduced by Ostrowski \cite{Ost}. 

\begin{fact}\label{f:Ostrowski}
Let $X \in \N$. Then $X$ can be written uniquely as
\begin{equation}\label{eq:Ost}
X = \sum_{n=0}^{N}b_{n+1}q_{n}.
\end{equation}
where $0 \le b_{1}<a_{1}$, $0 \le b_{n+1} \le a_{n+1}$ and $b_{n}=0$ whenever $b_{n+1}=a_{n+1}$.
\end{fact}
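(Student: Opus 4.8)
The plan is to prove existence by a greedy (largest-term-first) algorithm and uniqueness by a sharp bound on partial sums. For existence, observe first that~\eqref{eq:rec} together with $a_n\ge 1$ for $n\ge 1$ forces $1=q_0\le q_1<q_2<\cdots\to\infty$, so for a given $X\in\N$ (say $X>0$) there is a largest index $N$ with $q_N\le X$. I would set $b_{N+1}:=\lfloor X/q_N\rfloor$; since $q_{N+1}=a_{N+1}q_N+q_{N-1}>X$ we get $b_{N+1}q_N\le X<a_{N+1}q_N+q_{N-1}$, hence $b_{N+1}\le a_{N+1}$, and then recurse on the remainder $X':=X-b_{N+1}q_N\in[0,q_N)$. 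The step needing care is the side condition that $b_n=0$ whenever $b_{n+1}=a_{n+1}$: if $b_{N+1}=a_{N+1}$ then $X'=X-a_{N+1}q_N<q_{N+1}-a_{N+1}q_N=q_{N-1}$, so at the next stage the leading index is at most $N-2$ and the digit $b_N$ (the coefficient of $q_{N-1}$) comes out $0$. Iterating down to $q_0=1$ terminates, and in the final step $b_1=X''<q_1=a_1$ gives the required $0\le b_1<a_1$.

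For uniqueness, the engine is the inequality
\[
\sum_{n=0}^{M} b_{n+1}\ts q_{n} \; < \; q_{M+1} \qquad \text{for every } M\ge 0,
\]
valid for any digit sequence obeying the constraints of Fact~\ref{f:Ostrowski}. I would prove this by induction on $M$: the base case is $b_1 q_0=b_1<a_1=q_1$; for the inductive step write the sum as $b_{M+1}q_M+\sum_{n=0}^{M-1}b_{n+1}q_n$ and split into the case $b_{M+1}\le a_{M+1}-1$ (bound the tail by $q_M$ via the inductive hypothesis, so the total is $<a_{M+1}q_M\le q_{M+1}$) and the case $b_{M+1}=a_{M+1}$ (then $b_M=0$, so the tail is really $\sum_{n=0}^{M-2}b_{n+1}q_n<q_{M-1}$, giving total $<a_{M+1}q_M+q_{M-1}=q_{M+1}$), both using $q_{M-1}>0$ for $M\ge 1$. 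Given two legal representations of the same $X$ that differ, I take the largest position $M$ at which the digits differ, say $b_{M+1}>b'_{M+1}$, cancel the identical higher-order terms to obtain $\sum_{n=0}^{M}b_{n+1}q_n=\sum_{n=0}^{M}b'_{n+1}q_n$, and then derive a contradiction, since the left side is $\ge(b'_{M+1}+1)q_M$ while the right side is $<b'_{M+1}q_M+q_M=(b'_{M+1}+1)q_M$ by the displayed inequality.

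I expect the only real (and mild) obstacle to be the bookkeeping around the carry rule: checking that the greedy construction never outputs $b_{n+1}=a_{n+1}$ alongside a nonzero $b_n$, and handling the boundary conventions $q_{-1}=0$, $q_0=1$ and the asymmetric constraint $0\le b_1<a_1$ cleanly in the base cases of both inductions (including the degenerate possibility $q_0=q_1$ when $a_1=1$). Everything else is elementary manipulation of the recurrence~\eqref{eq:rec}; in particular, no properties of the differences $\beta_n$ from~\eqref{eq:odd_even}--\eqref{eq:sum_beta} are needed for this fact.
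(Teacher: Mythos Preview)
Your proposal is correct; the greedy construction and the partial-sum bound $\sum_{n=0}^{M} b_{n+1}q_n < q_{M+1}$ are exactly the standard argument. The paper itself does not give a proof at all---it simply cites \cite[Ch.~II-\S4]{RS}---so you are supplying a self-contained argument where the paper defers to the literature. The proof you sketch is essentially the one found in that reference, so there is nothing to contrast: your write-up would be a welcome expansion of the one-line citation, and the only care needed is precisely the boundary bookkeeping you already flag (the case $a_1=1$ with $q_0=q_1=1$, where the greedy algorithm must pick the larger index).
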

\begin{proof}
See~\cite[Ch.~II-\S4]{RS}.
\end{proof}
\nin
We refer to~\eqref{eq:Ost} as the \emph{$\al$-Ostrowski representation of $X$}. When $\alpha$ is clear from the context, we simply say the Ostrowski representation of $X$.
We also denote the coefficients $b_{n+1}$ in~\eqref{eq:Ost} by $[q_{n}](X)$. When $X$ is obvious from the context, we just write $[q_n]$. We denote by $\Ost(X)$ the set of $q_{n}$ with $[q_{n}](X) > 0$.

\medskip

\noindent 
Observe that $a_0 -\alpha \in (-1,0)$.
Let $I_{\alpha}$ be the interval $\bigl[a_0-\alpha,1 + (a_0 - \alpha)\bigr)$.
  Define $f_{\alpha} : \N \to I_{\al}$ to be the function that maps $X$ to $\alpha X - U$, where $U$ is the unique natural number such that $\alpha X - U \in I_{\alpha}$.
In other words:
\begin{equation}\label{eq:f}
f_{\alpha}(X)= \alpha X - U \ \Longleftrightarrow \ a_0-\alpha\leq \alpha X - U <1 + (a_0- \alpha)\..
\end{equation}
Let $g_{\alpha} : \N \to \N$ be the function that maps $X$ to the natural number $U$ satisfying $\alpha X - U\in I_{\al}$. The reader can check that $\alpha X = f_{\alpha}(X) + g_{\alpha}(X)$.

\begin{fact}\label{fact:f}
  Let $X\in \N$. Then
\begin{equation}\label{eq:snap_rem}
  f_{\alpha}(X) = \sum_{n=0}b_{n+1} \be_{n} \qquad \text{and} \qquad g_{\alpha}(X) = \sum_{n=0}b_{n+1}p_{n},
\end{equation}
where the coefficients $b_{n}$ are from~\eqref{eq:Ost}, and $f_{\alpha}(\N) = \{f_{\alpha}(X) \,:\, X \in \N\}$ is a dense subset of the interval $[-\frac{1}{\zeta_{\alpha}}, 1 - \frac{1}{\zeta_{\alpha}})$.
\end{fact}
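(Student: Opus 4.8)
The plan is to read both identities directly off the Ostrowski representation of $X$, then check that the number they produce lands inside the interval $I_\alpha$; the density assertion will follow from the classical equidistribution of $\{\alpha X \bmod 1\}$. Write $X=\sum_{n=0}^{N}b_{n+1}q_n$ as in~\eqref{eq:Ost}. Using $\beta_n=q_n\alpha-p_n$, multiply by $b_{n+1}$ and sum:
\[
\sum_{n=0}^{N}b_{n+1}\beta_n \;=\; \alpha\sum_{n=0}^{N}b_{n+1}q_n-\sum_{n=0}^{N}b_{n+1}p_n \;=\; \alpha X-U,\qquad U:=\sum_{n=0}^{N}b_{n+1}p_n .
\]
Since $U$ is a nonnegative integer and, by~\eqref{eq:f}, there is a unique integer with that property placing $\alpha X$ minus it into $I_\alpha$, it suffices to prove that $S:=\sum_{n=0}^{N}b_{n+1}\beta_n$ lies in $I_\alpha$: this forces $U=g_\alpha(X)$ and $f_\alpha(X)=\alpha X-U=S$, which are the two claimed formulas. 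Note also that $1/\zeta_\alpha=\alpha-a_0=\beta_0$, so $I_\alpha=[-\beta_0,\,1-\beta_0)$.

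For the two bounds on $S$ I would use only the sign pattern~\eqref{eq:odd_even}, the digit bounds $0\le b_1<a_1$ and $0\le b_{n+1}\le a_{n+1}$, and the identity~\eqref{eq:sum_beta}. For the lower bound, the even-indexed terms of $S$ are $\ge 0$ and each odd-indexed term satisfies $b_{n+1}\beta_n\ge a_{n+1}\beta_n$ because $\beta_n<0$; hence
\[
S \;\ge\; \sum_{\substack{0\le n\le N\\ n\text{ odd}}}b_{n+1}\beta_n \;\ge\; \sum_{\substack{0\le n\le N\\ n\text{ odd}}}a_{n+1}\beta_n \;\ge\; \sum_{n\text{ odd}}a_{n+1}\beta_n \;=\; -\beta_0,
\]
the last step being~\eqref{eq:sum_beta} at $n=0$, and the inequality before it holding because extending a finite sum of negative terms to the whole series only decreases it. For the upper bound, write $S+\beta_0=(b_1+1)\beta_0+\sum_{1\le n\le N}b_{n+1}\beta_n$; the odd-indexed terms are $\le 0$, while $b_1+1\le a_1$ and $b_{n+1}\le a_{n+1}$, so
\[
S+\beta_0 \;\le\; a_1\beta_0+\sum_{\substack{2\le n\le N\\ n\text{ even}}}a_{n+1}\beta_n \;<\; \sum_{n\text{ even}}a_{n+1}\beta_n \;=\; 1,
\]
the strict inequality since $X$ has only finitely many nonzero digits and the omitted even-indexed terms are positive. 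Here $\sum_{n\text{ even}}a_{n+1}\beta_n=1$ is~\eqref{eq:sum_beta} at $n=1$ combined with $\beta_1=a_1\beta_0+\beta_{-1}$ (from~\eqref{eq:rec}, with $\beta_{-1}=q_{-1}\alpha-p_{-1}=-1$); equivalently it is~\eqref{eq:sum_beta} formally at the index $n=-1$. Together these give $-\beta_0\le S<1-\beta_0$, i.e.\ $S\in I_\alpha$.

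For density, $f_\alpha(X)$ is by construction the unique representative of the coset $\alpha X+\Z$ in the fundamental domain $I_\alpha$ of length $1$. Given a nonempty subinterval $J\subseteq I_\alpha$, reducing $J$ modulo $1$ into $[0,1)$ yields a nonempty relatively open set $J'$, and $f_\alpha(X)\in J$ precisely when $\{\alpha X\}\in J'$. Since $\alpha$ is irrational, $\{\alpha X\bmod 1:X\in\N\}$ is dense in $[0,1)$, so such an $X$ exists; hence $f_\alpha(\N)$ is dense in $I_\alpha$. The only real work is the bookkeeping in the two displayed bounds on $S$ — isolating the auxiliary identity $\sum_{n\text{ even}}a_{n+1}\beta_n=1$ and keeping track of which inequalities are strict, since $I_\alpha$ is half-open — while everything else is substitution from~\eqref{eq:Ost} and a routine density argument.
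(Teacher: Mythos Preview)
Your proof is correct. The paper itself does not prove this fact but simply defers to Rockett--Sz\"usz \cite[Th.~1 on p.~25 and Th.~1 on p.~33]{RS}; what you have written is an explicit, self-contained version of that textbook argument, using only the sign pattern~\eqref{eq:odd_even}, the telescoping identity~\eqref{eq:sum_beta}, and Kronecker's density of $\{\alpha X \bmod 1\}$ for irrational~$\alpha$. Your derivation of the auxiliary identity $\sum_{n\text{ even}}a_{n+1}\beta_n=1$ via $\beta_1=a_1\beta_0-1$ is exactly the step needed to extend~\eqref{eq:sum_beta} to $n=-1$, and your tracking of which inequality is strict (only the upper one, matching the half-open interval $I_\alpha$) is accurate.
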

\begin{proof}
See ~\cite[Th.~1 on p.~25]{RS} and~\cite[Th.~1 on p.~33]{RS}.
\end{proof}

\subsection{Periodic continued fractions}\label{sec:periodic} Lagrange's theorem states that
an irrational number $\alpha$  is  quadratic  if and only if it has a periodic continued fraction expansion (see \cite[Chapter III.1 Theorem 1 and 2]{RS}). In this situation, we write
\[
\al = [a_{0};a_{1}, \dots, a_{m}, \cj{c_{0},c_{1},\dots,c_{k_{\alpha}-1}}],
\]
where $c_0,\dots,c_{k_{\alpha}-1}$ is the repeating block in the continued fraction expansion  with the \emph{minimum} period $k_{\alpha}$.

\medskip

\noindent Let $\gamma = [\cj{c_{0}; c_{1},\dots,c_{k-1}}]$.
It is easy to see that $\gamma = (c\al + d)/(e\al + f)$ for some $c,d,e,f \in \zz$.
Therefore, $\alpha$-PAs sentences can be expressed in $\gamma$-PA sentences and vice versa. Since $\alpha$ is constant, so are $c,d,e$ and $f$. Thus, for our complexity purposes, we can always assume that our quadratic irrational $\al$ is purely periodic, i.e.,
\begin{equation}\label{eq:periodic}
\al = [\cj{a_{0}; a_{1}, \dots, a_{k_{\alpha}-1}}]
\end{equation}
  with the minimum period $k_{\alpha}$.

\begin{fact}\label{fact:div_k}
  Let $i \in \N$.
  There exist $c_{i},d_{i} \in \Z$ such that for every $n \in \N$ with $k_{\alpha} | n$, we have:
  $$(p_{n+i},q_{n+i}) \;=\; c_{i}(p_{n},q_{n}) + d_{i}(p_{n+1},q_{n+1}).$$
  The coefficients $c_{i},d_{i}$ can be computed in time $\polyin(i)$.
\end{fact}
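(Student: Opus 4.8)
The plan is to exploit the matrix recurrence~\eqref{eq:matrix_rec} together with periodicity of the continued fraction. Since $\al$ is purely periodic with minimum period $k$, the partial quotients satisfy $a_{n}=a_{n \bmod k}$ for all $n$, so the sequence of transfer matrices $\Gamma_{n}$ is $k$-periodic: $\Gamma_{n+k}=\Gamma_{n}$. Set $M \coloneqq \Gamma_{0}\Gamma_{1}\cdots\Gamma_{k-1}$. Then for $k \mid n$ the product $\Gamma_{0}\cdots\Gamma_{n-1}$ equals $M^{n/k}$, so by~\eqref{eq:matrix_rec} the column vectors $(p_{n},q_{n})^{T}$ and $(p_{n+1},q_{n+1})^{T}$ are precisely the two columns of $M^{n/k}$ (up to the usual index shift; I would set up the bookkeeping so that $\begin{pmatrix} p_{n} & p_{n-1} \\ q_{n} & q_{n-1}\end{pmatrix} = \Gamma_{0}\cdots\Gamma_{n}$ as in the stated recurrence, which for $k\mid n$ becomes $M^{n/k}\Gamma_{0}$, and adjust $c_i,d_i$ accordingly).

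The first key step is to observe that for any fixed $i$, the pair $(p_{n+i},q_{n+i})$ is obtained from $(p_{n},q_{n})$ and $(p_{n+1},q_{n+1})$ by applying the fixed matrix $\Gamma_{n+1}\cdots\Gamma_{n+i}$, and that by $k$-periodicity this matrix depends only on $i \bmod k$ and on the starting residue — but since $k\mid n$ the starting residue is always $0$, so the matrix $N_{i} \coloneqq \Gamma_{1}\Gamma_{2}\cdots\Gamma_{i}$ is literally the same for every such $n$. Concretely, iterating~\eqref{eq:rec} gives $p_{n+i}=A_{i}p_{n}+B_{i}p_{n-1}$ and $q_{n+i}=A_{i}q_{n}+B_{i}q_{n-1}$ with the same integers $A_{i},B_{i}$ (the entries of $N_{i}$), independent of $n$ as long as $k\mid n$; rewriting $(p_{n-1},q_{n-1})$ in terms of $(p_{n},q_{n})$ and $(p_{n+1},q_{n+1})$ via $p_{n-1}=p_{n+1}-a_{n+1}p_{n}=p_{n+1}-a_{1}p_{n}$ (again using periodicity, $a_{n+1}=a_{1}$) yields the desired $c_{i},d_{i}\in\Z$. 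The second step is the complexity bound: each $\Gamma_{j}$ has entries bounded by $\max_j a_j + 1 = O(1)$, so $N_{i}=\Gamma_{1}\cdots\Gamma_{i}$ can be computed by $i-1$ matrix multiplications of $2\times 2$ integer matrices whose entries grow at most geometrically in $i$, hence have $O(i)$ bits; this is $\polyin(i)$ time, and $c_{i},d_{i}$ are then extracted by $O(1)$ further arithmetic operations on $O(i)$-bit integers.

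I do not expect a genuine obstacle here — this is a bookkeeping lemma. The one place requiring care is the index alignment: the recurrence~\eqref{eq:rec} is stated with the convention $(p_{-1},q_{-1})=(1,0)$, $(p_0,q_0)=(a_0,1)$, so one must be careful whether the relevant "two-step" data at level $n$ is $\{(p_n,q_n),(p_{n+1},q_{n+1})\}$ or $\{(p_{n-1},q_{n-1}),(p_n,q_n)\}$, and ensure the extracted $c_i,d_i$ match the exact form in the statement. It is also worth noting explicitly that $c_i,d_i$ depend only on $i$ and not on $n$ precisely because $k\mid n$ forces every relevant partial quotient $a_{n+1},\dots,a_{n+i}$ to equal $a_{1},\dots,a_{i}$ respectively; without the divisibility hypothesis the claim would fail. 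Finally, one should remark that $c_i,d_i$ can be taken to depend only on $i\bmod k$ together with $\lfloor i/k\rfloor$ (via a power of $M$), which is where the $\polyin(i)$ — rather than merely "computable" — bound comes from, since $M^{\lfloor i/k\rfloor}$ is computed by fast exponentiation in $\polyin(\log i)$ matrix multiplications with $O(i)$-bit entries.
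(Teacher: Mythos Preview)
Your proposal is correct and follows essentially the same approach as the paper: use the matrix recurrence~\eqref{eq:matrix_rec} together with the $k$-periodicity $\Gamma_{n+j}=\Gamma_{j}$ (valid since $k\mid n$) to see that the tail product of transfer matrices is independent of $n$, and read off $c_i,d_i$ from its entries. The only cosmetic difference is that the paper starts from the matrix $\begin{pmatrix} p_{n+1} & p_{n} \\ q_{n+1} & q_{n}\end{pmatrix}$ and multiplies on the right by $\Gamma_{2}\cdots\Gamma_{i+1}$, obtaining $c_i,d_i$ directly, whereas you first express $(p_{n+i},q_{n+i})$ in terms of $(p_n,q_n),(p_{n-1},q_{n-1})$ via $N_i=\Gamma_1\cdots\Gamma_i$ and then rewrite $(p_{n-1},q_{n-1})$ using $a_{n+1}=a_1$; your detour is harmless and you correctly flag the index-alignment bookkeeping as the only point needing care.
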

\begin{proof}
By~\eqref{eq:matrix_rec},  we have:
  $$
\begin{pmatrix}
p_{n+i+1} & p_{n+i} \\ q_{n+i+1} & q_{n+i}
\end{pmatrix}
\; = \;
\Gamma_{0}\, \cdots\, \Gamma_{n+1} \,\, \Gamma_{n+2} \cdots \Gamma_{n+i+1}
\; = \;
\begin{pmatrix}
p_{n+1} & p_{n} \\ q_{n+1} & q_{n}
\end{pmatrix}
\, \Gamma_{n+2} \cdots \Gamma_{n+i+1}\ts.
  $$
Since \. $\Gamma_{k+t}=\Gamma_{t}$ \. for every $t \in \N$ and $k_{\alpha} | n$, 
we have \, $\Gamma_{n+2} \dots \Gamma_{n+i+1} \. = \. \Gamma_{2} \cdots \Gamma_{i+1}$.
Let \ts $c_i,d_i,c_i',d_i' \in \Z$ \ts be such that
\begin{equation}\label{eq:cd_rec}
\begin{pmatrix}
d'_{i} & d_{i} \\ c'_{i} & c_{i}
\end{pmatrix} \. = \. \Gamma_{2}\,\cdots\,\Gamma_{i+1}\ts .
\end{equation}
This choice immediately gives that
\[
\begin{pmatrix}
p_{n+i+1} & p_{n+i} \\ q_{n+i+1} & q_{n+i}
\end{pmatrix}
\; = \;
\begin{pmatrix}
p_{n+1} & p_{n} \\ q_{n+1} & q_{n}
\end{pmatrix}
\begin{pmatrix}
d'_{i} & d_{i} \\ c'_{i} & c_{i}
\end{pmatrix}.
\]
Thus $(p_{n+i},q_{n+i}) \;=\; c_{i}(p_{n},q_{n}) + d_{i}(p_{n+1},q_{n+1})$. Note that $c_{i},d_{i}$ only depend on $i$ and can be computed in time $\polyin(i)$ by~\eqref{eq:cd_rec}.
\end{proof}

\begin{rem}\label{rem:gamma}
  For $i=0$, we have $c_{0}=1,d_{0}=0$.
  For $i=1$, we have $c_{1}=0,d_{1}=1$.
  By~\eqref{eq:cd_rec},
  if we let $\gamma_{i}(v,v') \coloneqq c_{i} v + d_{i} v'$ then they follow the recurrence:
\begin{equation}\label{eq:gamma_rec}
\gamma_{0}(v,v') = v,\; \gamma_{1}(v,v') = v',\; \gamma_{i}(v,v')=a_{i}\gamma_{i-1}(v,v')+\gamma_{i-2}(v,v'),
\end{equation}
as similar to~\eqref{eq:rec}.
\end{rem}

\begin{fact}\label{fact:nice_rec}
There are $\mu,\nu,\mu',\nu' \in \Q$ such that for all $n\in \N$
\[
p_{n} = \mu q_{n} + \mu' q_{n+k_{\alpha}}, \quad q_{n} = \nu p_{n} + \nu' p_{n+k_{\alpha}}
\]
\end{fact}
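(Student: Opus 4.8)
The plan is to exploit the periodicity of $\al$ at the level of the matrices $\Gamma_i$ appearing in~\eqref{eq:matrix_rec}. Since we have normalized $\al$ to be purely periodic with period $k$ as in~\eqref{eq:periodic}, the partial quotients satisfy $a_{n+k}=a_n$ for \emph{every} $n\ge 0$, and hence $\Gamma_{n+k}=\Gamma_n$ for every $n\ge 0$. Writing $M_n := \Gamma_0\Gamma_1\cdots\Gamma_n = \begin{pmatrix} p_n & p_{n-1}\\ q_n & q_{n-1}\end{pmatrix}$, I would first record the identity
\[
M_{n+k} \;=\; (\Gamma_0\cdots\Gamma_{k-1})\,(\Gamma_k\cdots\Gamma_{n+k}) \;=\; (\Gamma_0\cdots\Gamma_{k-1})\,(\Gamma_0\cdots\Gamma_n) \;=\; M_{k-1}\,M_n
\]
for all $n\ge 0$, where the middle equality uses $\Gamma_{k+t}=\Gamma_t$. (Alternatively, the two scalar identities below can be proved directly by induction on $n$ from~\eqref{eq:rec}, using $a_{n+k}=a_n$; the matrix form is just more compact.)

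Comparing the first columns of $M_{n+k}=M_{k-1}M_n$ then yields, for every $n\ge 0$,
\[
p_{n+k} \;=\; p_{k-1}\,p_n + p_{k-2}\,q_n,
\qquad
q_{n+k} \;=\; q_{k-1}\,p_n + q_{k-2}\,q_n .
\]
The second step is to invert these two relations. Because $\al$ is purely periodic we have $a_0 = \floor{\al} \ge 1$, so $p_j > 0$ for all $j\ge -1$ and $q_j>0$ for all $j\ge 0$; in particular $q_{k-1}\neq 0$ and $p_{k-2}\neq 0$. Solving the second equation for $p_n$ gives
\[
p_n \;=\; -\frac{q_{k-2}}{q_{k-1}}\,q_n \;+\; \frac{1}{q_{k-1}}\,q_{n+k},
\]
so one may take $\mu := -q_{k-2}/q_{k-1}$ and $\mu' := 1/q_{k-1}$; solving the first equation for $q_n$ gives
\[
q_n \;=\; -\frac{p_{k-1}}{p_{k-2}}\,p_n \;+\; \frac{1}{p_{k-2}}\,p_{n+k},
\]
so one may take $\nu := -p_{k-1}/p_{k-2}$ and $\nu' := 1/p_{k-2}$. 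All four quantities lie in $\Q$, which finishes the proof. (The same argument works verbatim if $k$ is replaced by any positive multiple of the minimal period.)

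This is a short computation and I do not expect a genuine obstacle. The only points that need a moment's care are: (i) the equality $\Gamma_{k+t}=\Gamma_t$, i.e. that pure periodicity of the continued fraction forces the matrix sequence $(\Gamma_i)_{i\ge 0}$ to be periodic \emph{starting from index $0$} — this is where the normalization in~\eqref{eq:periodic} is essential, since for a general irrational $\al$ no such fixed linear relation between $p_{n+k}, p_n, q_n$ can hold; and (ii) the non-vanishing of the denominators $q_{k-1}$ and $p_{k-2}$, which again uses $a_0\ge 1$.
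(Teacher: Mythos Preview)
Your proof is correct and follows essentially the same approach as the paper's: both use the periodicity $\Gamma_{t+k}=\Gamma_t$ to obtain the matrix identity $M_{n+k}=M_{k-1}M_n$ (equivalently, left-multiplication by $\Gamma_0\cdots\Gamma_{k-1}$), read off the linear relations between $(p_{n+k},q_{n+k})$ and $(p_n,q_n)$, and then invert. Your version is in fact a bit more explicit than the paper's, since you write down $\mu,\mu',\nu,\nu'$ and verify the non-vanishing of the denominators $q_{k-1}$ and $p_{k-2}$ using $a_0\ge 1$; the paper simply says ``from here we easily get $\mu,\nu,\mu',\nu'$.''
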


\begin{proof}

From~\eqref{eq:matrix_rec}, for every $n \in \N$ we have: 
$$\begin{pmatrix}
p_{n} \\ q_{n}
\end{pmatrix}
\; = \;
\Gamma_{0} \, \Gamma_{1} \, \cdots \, \Gamma_{n} \,
\begin{pmatrix}
1 \\ 0
\end{pmatrix}.
$$
Since $\Gamma_{i+k_{\alpha}}=\Gamma_{i}$,
$$
\aligned
\bfrac{p_{n+k_{\alpha}}}{q_{n+k_{\alpha}}} \, &= \,  \bigl(\Gamma_{0} \dots \Gamma_{k_{\alpha}-1}\bigr) \; (\Gamma_{k_{\alpha}} \cdots \Gamma_{n+k_{\alpha}}) \bfrac{1}{0}
\, = \, \bigl(\Gamma_{0} \cdots \Gamma_{k_{\alpha}-1}\bigr) \; \bigl(\Gamma_{0} \cdots \Gamma_{n}\bigr) \bfrac{1}{0}  \\
& = \; \bigl(\Gamma_{0} \cdots \Gamma_{k_{\alpha}-1}\bigr) \bfrac{p_{n}}{q_{n}}
\; = \;
\begin{pmatrix}
p_{k_{\alpha}-1} & p_{k_{\alpha}-2}\\
q_{k_{\alpha}-1} & q_{k_{\alpha}-2}
\end{pmatrix}
\bfrac{p_{n}}{q_{n}}
.
\endaligned
$$
Note that $p_{k_{\alpha}-1},q_{k_{\alpha}-1},p_{k_{\alpha}-1},q_{k_{\alpha}-2}$ do not depend on $n$. From here we easily get $\mu,\nu,\mu',\nu'$ with the desired property.
\end{proof}


\bigskip

\section{$\al$-Presburger formulas}\label{section:alphapa}
Fix some $\alpha \in \R$. An \emph{$\alpha$-PA formula} is of the form
\[
Q_{1} \y_{1} \; \dots \; Q_{r} \y_{r} \;\; \Phi(\y_{1},\dots,\y_{r},\x),
\]
where $\Phi$ is a Boolean combination of linear inequalities in $\y_{1} \in \Z^{n_{1}},\dots,\y_{r}\in\Z^{n_{r}}, \x\in\Z^{m}$ with coefficients and constant terms in $\Z[\al]$ and $\y_{1},\dots,\y_{r},\mathbf{x}$ are integer variables; or any logically equivalent first-order formula in the language $\mathcal{L}_{\alpha}=\{+,0,1,<,\lambda_{p} : p \in \Z[\al]\}$ where $\lambda_{p}$ is a unary function symbol for multiplication by $p \in \Z[\alpha]$. We will denote a generic $\al$-PA formula as $F(\x)$, where $\x$ are the free variables of $F$, i.e., those not associated with a quantifier. An \emph{$\alpha$-PA sentence} is an $\alpha$-PA formula without free variables.

\medskip

\noindent Given an $\al$-PA formula $F(\x)$ and $\mathbf{X}\in \Z^{|\x|}$, we say $F(\mathbf{X})$ holds (or is true) if the statement obtained by replacing the free variables in $F$ by $\mathbf{X}$ and letting the quantified variables $\y_{i}$ range over $\Z^{n_{i}}$, is true.
We say that a set $S\subseteq \Z^m$ is \emph{$\al$-PA definable} (or an \emph{$\al$-PA set}) if there exists an $\al$-PA formula $F(\x)$ such that
\[
S = \{ \mathbf{X} \in \Z^{|\x|} \ : \ F(\mathbf{X})\}.
\]

\noindent When $\alpha=0$, then a $\al$-PA formula is just a classical  PA-formula. Hence $0$-PA is just PA, and therefore decidable.  Let $\mathcal{S}_{\al} = (\R, <, +,\zz, x \mapsto \al x)$. As pointed out in the introduction,  the first-order theory $T_{\al}$ of $\mathcal{S}_{\al}$ contains all true $\alpha$-PA sentence. Since $T_{\al}$ is decidable by \cite{H}, we have:

\begin{thm} Let $\alpha$ be quadratic. Then $\alpha$-\textup{PA} is decidable.
\end{thm}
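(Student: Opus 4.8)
The plan is essentially to observe that this theorem is a near-immediate corollary of results already recalled in the excerpt, so the ``proof'' consists mainly of correctly assembling the chain of implications. First I would recall the key structural fact established in the introduction: for any real $\alpha$, the first-order theory $T_{\al}$ of the structure $\mathcal{S}_{\al} = (\R, <, +, \zz, x \mapsto \al x)$ contains every true $\alpha$-PA sentence, and more precisely a given $\alpha$-PA sentence $S$ is true (in the integers, with the quantified variables ranging over $\Z^{n_i}$) if and only if the corresponding $\mathcal{L}_{\al}$-sentence is a consequence of $T_{\al}$. This is because each $\alpha$-PA sentence, by definition, is logically equivalent to an $\mathcal{L}_{\al}$-sentence in which the integer quantifiers are relativized to the predicate $\zz$, and inequalities with coefficients in $\Z[\al]$ are expressed using the scalar-multiplication function symbols $\lambda_p$ together with $+$ and $<$; the truth value of such a relativized sentence in $\mathcal{S}_{\al}$ matches its truth value as an $\alpha$-PA sentence.

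Next, since $\alpha$ is assumed quadratic, I would invoke Hieronymi's theorem (\cite{H}), already cited in the excerpt, which states that $T_{\al}$ is decidable for quadratic irrational $\alpha$. (The case $\alpha \in \qqq$, including $\alpha = 0$, is the classical decidability of Presburger arithmetic, already noted; but the theorem as stated concerns quadratic irrationals, so the relevant input is \cite{H}.) Combining the two facts: given an $\alpha$-PA sentence $S$, translate it into the corresponding $\mathcal{L}_{\al}$-sentence $S^{\ast}$, decide whether $S^{\ast}$ holds in $\mathcal{S}_{\al}$ using the decision procedure for $T_{\al}$, and output that answer; by the equivalence above this correctly decides the truth of $S$. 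Hence $\alpha$-PA is decidable.

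There is no real obstacle here: the only point requiring a line of care is verifying that the translation from an $\alpha$-PA sentence to an $\mathcal{L}_{\al}$-sentence is effective and truth-preserving, i.e. that quantification over $\Z^{n_i}$ is faithfully captured by relativizing first-order quantifiers to the predicate $\zz$ and that $\Z[\al]$-linear inequalities lie in the definitional closure of $\mathcal{L}_{\al}$ — both of which are immediate from the setup of Section~\ref{section:alphapa}. (I would note that this crude argument gives no complexity information whatsoever; the quantitative content, i.e. the tower-of-exponentials bound, is exactly the subject of Theorem~\ref{th:quad-upper} and is proved there by a direct route through automata rather than through the black-box decidability of $T_{\al}$.) So the proof is just: translate $S$ to $S^{\ast}$, apply the decision procedure of \cite{H} for $T_{\al}$, and return the result. $\qed$
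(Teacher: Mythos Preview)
Your proposal is correct and matches the paper's own argument: the theorem is stated immediately after the observation that $T_{\al}$ contains every true $\alpha$-PA sentence, and the paper's proof is exactly the one-line deduction ``since $T_{\al}$ is decidable by \cite{H}'' that you spell out. Your additional remarks about the effectiveness of the translation and the lack of complexity information are accurate and appropriate.
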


\noindent The main difference between the situation when $\alpha$ is rational and when it is irrational, is that when $\alpha$ is irrational, $\alpha$-PA formulas can express properties of the $\alpha$-Ostrowski representation of natural numbers. This increases the computational complexity of the decision procedure of $\alpha$-PA in comparison to the one for PA.

\subsection{$\al$-PA formulas for working with Ostrowski representation}\label{sec:Ostrowski}
Let $\al$ be an irrational number, not necessarily quadratic. In this section, we will show that various properties of Ostrowski representations can be expressed using $\alpha$-PA formulas.

\medskip

\noindent By Fact \ref{bestrational} the convergents $\{p_{n}/q_{n}\}$ of $\alpha$ can be characterized by the best approximation property.
Namely, $u/v$ with $v > 1$ is a convergent $p_{n}/q_{n}$ for some $n\in \N$ if and only if
\begin{equation}\label{eq:conv2}
\for w,z \; (0 < z < v) \to |z \al  - w| >  |v\alpha  - u|.
\end{equation}
Here $\gcd(u,v)=1$ is implied, since if $k=\gcd(u,v)>1$, then $|\al v/k - u/k| < |\al v - u|$ and $0 < v/k < v$. Now consider two consecutive convergents $(u,v)=(p_{n},q_{n})$ and $(u',v')=(p_{n+1},q_{n+1})$ for some $n\in \N$. For any integers $0 < z < v'$ and $w$, first we have $|z \al - w| > |v' \al - u'|$. If $|z \al  - w| < |v \al - u|$, then first we must have $v < z < v'$. Then among all such pairs $(w,z)$, the one with the minimum $|z \al - w|$ must necessarily be another convergent of $\al$, which is impossible since we assumed that $(u,v)$ and $(u',v')$ are consecutive. Thus, a necessary and sufficient condition for $(u,v)$ and $(u',v')$ to be consecutive convergents is simply:
\begin{equation}\label{eq:consec}
\gathered
  \Conv(u,v,u',v') \; \coloneqq  \; 0 < v < v' \; \land \; \for w,z  \; \big( 0 < z < v' \to \\
 |z\al - w| \ge |v\al - u | > |v'\al - u'| \big).
\endgathered
\end{equation}
Note that $\Conv$ is a $\forall$-formula.
More generally, consider the $\al$-PA formula:
\begin{equation}\label{eq:multi_consec}
  \gathered
\Conv(u_{0},v_{0},\dots,u_{k},v_{k}) \; \coloneqq  \; 1 < v_{0} < v_{1} < \dots < v_{k} \; \land\\
\for w,z \; \bigwedge_{i=0}^{k} \Big( 0 < z < v_{i+1} \to |z \al - w| \ge |v_i \al - u_{i}| > |v_{i+1} \al - u_{i+1}| \Big).
\endgathered
\end{equation}
Then $\Conv$ is true if and only if $(u_{0},v_{0})=(p_{n},q_{n}), \dots, (u_{k},v_{k})=(p_{n+k},q_{n+k})$ for some $n$ with $q_{n}>1$, i.e., $k+1$ consecutive convergents of $\al$.

\medskip

\noindent Define the following quantifier-free $\alpha$-PA formulas:
\begin{equation}\label{eq:After}
\aligned
\After(u,v,u',v',Z,Z') \; \coloneqq \; & \bigl(-\al v + u < \al Z - Z' < -\al v' + u'\bigr)  \\
& \vee \bigl(-\al v' + u' < \al Z - Z' < -\al v + u\bigr).
\endaligned
\end{equation}
\begin{equation}\label{eq:wtAfter}
\aligned
\wt\After(u,v,u',v',Z,Z') \; \coloneqq  \; & \bigl(-\al v + u - \al v' + u' < \al Z - Z' < -\al v' + u'\bigr)\\
& \vee \bigl(-\al v' + u' < \al Z - Z' < -\al v + u - \al v' + u'\bigr).
\endaligned
\end{equation}

\begin{fact}\label{fact:After}
Let $u,v,u',v'\in \N$ and $n\in \N$ such that $(u,v)=(p_{n},q_{n})$ and $(u',v')=(p_{n+1},q_{n+1})$, and let $Z \in\N$. Then
\begin{enumerate}[label=(\roman*)]
\item $\Ost(Z) \subseteq \{q_{n+1}, q_{n+2}, \dots\}$ if and only if $\After(u,v,u',v',Z,Z')$ holds for some $Z'\in \N$.
  \item $\Ost(Z) \subseteq \{q_{n+1}, q_{n+2}, \dots\}$ and $[q_{n+1}](Z) < a_{n+2}$ if and only if $\wt\After(u,v,u',v',Z,Z')$ holds for some $Z'\in\N$.
\end{enumerate}
Also $Z'$ is uniquely determined by $Z$ if $\After$ or $\wt \After$ holds.
\end{fact}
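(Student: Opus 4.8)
## Proof Proposal for Fact~\ref{fact:After}

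The plan is to unwind the definition of the Ostrowski representation together with the formula for $f_\alpha$ from Fact~\ref{fact:f}. Recall that $f_\alpha(Z) = \sum_{m \ge 0} [q_m](Z)\,\beta_m$, where $\beta_m = q_m\alpha - p_m$ is the $m$-th difference. The key observation is that the condition $\Ost(Z) \subseteq \{q_{n+1}, q_{n+2}, \dots\}$ means exactly that the low-order coefficients $[q_0](Z), \dots, [q_n](Z)$ all vanish, so that $f_\alpha(Z) = \sum_{m \ge n+1} [q_m](Z)\,\beta_m$. I would first establish, using the alternating-sign and decay properties~\eqref{eq:odd_even}, \eqref{eq:beta_dec}, and the summation identity~\eqref{eq:sum_beta}, sharp two-sided bounds on the tail sum $\sum_{m \ge n+1} b_{m+1}\beta_m$ under the admissibility constraints $0 \le b_{m+1} \le a_{m+1}$, $b_m = 0$ whenever $b_{m+1} = a_{m+1}$. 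Concretely, this tail lies strictly between $0$ and $\beta_n$ when it is nonempty — more precisely, one gets that $\sum_{m\ge n+1} b_{m+1}\beta_m \in (\min(0,\beta_n), \max(0,\beta_n))$, and the refined bound $[q_{n+1}](Z) < a_{n+2}$ corresponds to the tighter interval obtained by replacing $\beta_n$ with $\beta_n + \beta_{n+1}$ (using $-\beta_n = a_{n+2}\beta_{n+1} + (\text{smaller terms})$). These are the two intervals appearing in $\After$ and $\wt\After$ respectively, once we note $-\alpha v + u = -\beta_n$ and $-\alpha v' + u' = -\beta_{n+1}$.

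Next I would set up the bookkeeping for $Z'$. Since $\alpha Z = f_\alpha(Z) + g_\alpha(Z)$ with $g_\alpha(Z) = \sum_m [q_m](Z) p_m \in \N$ (Fact~\ref{fact:f}), writing $Z' := g_\alpha(Z) - (\text{integer part adjustment})$ we have $\alpha Z - Z' = f_\alpha(Z) + (\text{something in }\Z)$; the point of the formula $\After$ is that there is a \emph{unique} integer $Z'$ for which $\alpha Z - Z'$ lands in the prescribed open interval, because that interval has length $|\beta_n| < 1$. So the forward direction is: given $Z$ with $\Ost(Z) \subseteq \{q_{n+1}, \dots\}$, take $Z' = g_\alpha(Z) - g_\alpha(\text{the part of }Z\text{ below }q_{n+1}) = g_\alpha(Z)$ (the subtracted part is $0$ here), and verify $\alpha Z - Z' = f_\alpha(Z) = \sum_{m \ge n+1} b_{m+1}\beta_m$ lies in the interval by the bound from step one. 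For the converse: if $\After(u,v,u',v',Z,Z')$ holds for some $Z'$, then $\alpha Z - Z'$ lies strictly between $-\beta_n$ and $-\beta_{n+1}$ (in whichever order); comparing this with the general expansion $\alpha Z - g_\alpha(Z) = f_\alpha(Z)$ and using uniqueness of the integer making $\alpha Z - (\cdot)$ fall in an interval of length $< 1$, I would conclude $Z' = g_\alpha(Z)$ and $f_\alpha(Z)$ lies in that interval, which by the characterization of admissible tails (the "only if" half of step one) forces the low coefficients of $Z$ to vanish. The refined statement~(ii) is handled identically with $\wt\After$ and the tighter interval.

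The uniqueness of $Z'$ is immediate once the interval has length $|\beta_n| \le |\beta_0| < 1$: two integers differing by a nonzero amount cannot both make $\alpha Z - Z'$ fall in it.

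The main obstacle I anticipate is step one — getting the tail-sum bounds \emph{exactly} matched to the interval endpoints in~\eqref{eq:After} and~\eqref{eq:wtAfter}, including the boundary/degenerate cases (when the tail is a single term $b_{n+1}\beta_{n+1}$, or when $b_{n+1} = 0$, or when $b_{n+1}$ is close to but not equal to $a_{n+2}$). The alternating signs mean one must be careful about which endpoint is the larger, hence the disjunction over the two orderings in both formulas; and proving the \emph{strictness} of the inequalities (so that the characterization is an exact "if and only if" rather than up to boundary) requires using the admissibility condition $b_m = 0$ when $b_{m+1} = a_{m+1}$ in an essential way, via the identity~\eqref{eq:sum_beta}. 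Everything else is routine unwinding of the definitions of $f_\alpha$, $g_\alpha$, and $\Ost$.
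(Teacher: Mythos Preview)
Your approach is essentially the same as the paper's: both compute $f_\alpha(Z)$ as the tail $\sum_{m\ge n+1} b_{m+1}\beta_m$, bound it using the alternating signs \eqref{eq:odd_even} together with the identity \eqref{eq:sum_beta}, and take $Z'=g_\alpha(Z)$. One small slip to fix when you write it up: the tail ranges over the open interval with endpoints $-\beta_n$ and $-\beta_{n+1}$ (not $0$ and $\beta_n$) --- which you in fact recover correctly at the end of your first paragraph --- and that interval has length $|\beta_n|+|\beta_{n+1}|\le 1$ rather than $|\beta_n|$, but uniqueness of $Z'$ still follows since the inequalities are strict.
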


\begin{proof}
This proof is similar to the proofs of Lemmas 4.6, 4.7 and 4.8 in ~\cite{H}.

\medskip

\noindent  i)
  Assume $n$ is odd.
  If $\Ost(Z) \subseteq \{q_{n+1}, q_{n+2}, \dots\}$, then the $\al$-Ostrowski representation of $Z$ is $Z = \sum_{k = n+1}^{N} b_{k+1} q_{k}$ for some $N \ge n+1$.
  From Fact~\ref{fact:f}, we have $f_{\al}(Z) = \sum_{k=n+1}^{N} b_{k+1} \be_{k}$.
  By~\eqref{eq:odd_even}, we have have $\be_{k} > 0$ if $k$ is even and $\be_{k} < 0$ if $k$ is odd.
Combining this with $b_{k+1} \le a_{k+1}$, we obtain
  $$
a_{n+3}\be_{n+2} + a_{n+5}\be_{n+4} + \dots \; < \; f_{\al}(Z) = \sum_{k=n+1}^{N} b_{k+1}\be_{k} \; < \; a_{n+2}\be_{n+1} + a_{n+4}\be_{n+3} + \dots
  $$
By~\eqref{eq:sum_beta}, this can be written as $-\be_{n+1} < f_{\al}(Z) < -\be_{n}$.
By~\eqref{eq:f}, we have $f_{\al}(Z) = \al Z - Z'$, where $Z' \in \N$ is unique such that $aZ - Z' \in I_{\alpha}$.
Also note that $\be_{n} = \al v - u$ and $\be_{n+1} = \al v' - u'$.
So the above inequalities can be written as $-\al v' + u' < \al Z - Z' < -\al v + u$.
When $n$ is even, the inequalities reverse to $-\al v + u < \al Z - Z' < -\al v' + u'$.
Thus $\Ost(Z) \subseteq \{q_{n+1}, q_{n+2}, \dots\}$ implies $\After(u,v,u',v',Z,Z')$.
The converse direction can be proved similarly, using~\eqref{eq:beta_dec} and~\eqref{eq:sum_beta}.

\medskip

\noindent ii) The only difference here is that $[q_{n+1}](Z) = b_{n+2}$ can be at most $a_{n+2}-1$. Details are left to the reader.
 \end{proof}

\noindent The relation $v \in \Ost(X)$, meaning that $v=q_{n}$ appears in $\Ost(X)$, is definable by an existential $\al$-PA formula:
\begin{equation}\label{eq:In_ex}
\ex Z_{1},Z_{2},Z_{3}  \;\; (v \le Z_{1} < v') \; \land \; \wt\After(u,v,u',v',Z_{2},Z_{3}) \; \land \; X = Z_{1} + Z_{2}
\end{equation}
  and by a universal $\al$-PA formula:
\begin{equation}\label{eq:In_for}
 \for Z_{1},Z_{2},Z_{3} \;\; \Big[ (Z_{1} < v) \; \land \; \After(u,v,u',v',Z_{2},Z_{3})  \Big] \to Z_{1} + Z_{2} \neq X\ts.
\end{equation}
  To see this, note that $v \notin \Ost(X)$ if and only if $X = Z_{1} + Z_{2}$ for some $Z_{1}, Z_{2}$ with  $\Ost(Z_{1}) \subseteq \{q_{0},q_{1},\dots,q_{n-1}\}$ and $\Ost(Z_{2}) \subseteq \{q_{n+1},q_{n+2},\dots\}$.

\medskip

\noindent We will need one more quantifier-free $\al$-PA formula:
\begin{equation}\label{eq:Compatible}
\aligned
\Comp(u,v,u',v',X,Z,Z') \; \coloneqq  \;  X < &\ v' \; \land \; \After(u,v,u',v',Z,Z')  \\
 & \land  \; \Big( X \ge v \to \wt\After(u,v,u',v',Z,Z') \Big).
\endaligned
\end{equation}

\begin{fact}\label{fact:Compatible}
Let $u,v,u',v'\in \N$ and $n\in \N$ such that $(u,v)=(p_{n},q_{n})$ and $(u',v')=(p_{n+1},q_{n+1})$, and let $X,Z \in \N$.
Then $\Comp(u,v,u',v',X,Z,Z')$ holds for some $Z'\in \N$ if and only if
\begin{itemize}
\item $\Ost(X) \subseteq \{q_{0},\dots,q_{n}\}$ (by $X < v'$),
\item $\Ost(Z) \subseteq \{q_{n+1},q_{n+2},\dots\}$ (by $\After$),
\item if $q_{n} \in \Ost(X)$, then $[q_{n+1}](Z)< a_{n+2}$   (by $\wt\After$).
\end{itemize}
\end{fact}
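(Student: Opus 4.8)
The plan is to take the conjunction defining $\Comp$ apart, conjunct by conjunct, and match each piece against one of the three displayed conditions, using Fact~\ref{fact:After} for the statements about $Z$ and an elementary property of Ostrowski representations for the statements about $X$. The elementary property I would first record is: for every $m\ge 0$ and every $Y\in\N$, one has $\Ost(Y)\subseteq\{q_0,\dots,q_m\}$ if and only if $Y<q_{m+1}$; equivalently, the largest integer whose Ostrowski expansion is supported on $\{q_0,\dots,q_{m-1}\}$ is exactly $q_m-1$ (using $q_0=1$, this also covers $m=0$). This follows from the uniqueness in Fact~\ref{f:Ostrowski} together with the recurrence~\eqref{eq:rec}, and is in any case standard (see~\cite{RS}). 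Two instances are needed. With $m=n$ and $q_{n+1}=v'$, the conjunct $X<v'$ of $\Comp$ is equivalent to $\Ost(X)\subseteq\{q_0,\dots,q_n\}$, which is the first displayed condition. With $m=n-1$ and $q_n=v$, once $\Ost(X)\subseteq\{q_0,\dots,q_n\}$ is known, the inequality $X\ge v$ holds if and only if $q_n\in\Ost(X)$; this is exactly what is needed to reinterpret the hypothesis "$X\ge v$" of the last conjunct of $\Comp$ as the hypothesis "$q_n\in\Ost(X)$" of the third displayed condition.

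Next I would deal with $Z$. By Fact~\ref{fact:After}(i), $\After(u,v,u',v',Z,Z')$ holds for some $Z'\in\N$ exactly when $\Ost(Z)\subseteq\{q_{n+1},q_{n+2},\dots\}$, and in that case $Z'$ is forced to be the unique natural number with $\al Z-Z'\in I_\al$, namely $Z'=g_\al(Z)$. By Fact~\ref{fact:After}(ii), for that same $Z$ and the same $Z'=g_\al(Z)$, $\wt\After(u,v,u',v',Z,Z')$ additionally holds exactly when $[q_{n+1}](Z)<a_{n+2}$. The one point that makes the conjunction in $\Comp$ behave cleanly is that both $\After$ and $\wt\After$, whenever either holds, determine $Z'$ to be this same value $g_\al(Z)$ (this is part of the statement of Fact~\ref{fact:After} and is visible in its proof); hence there is never a clash between the second and third conjuncts of $\Comp$, and the single $Z'$ existentially quantified in the present Fact is always $g_\al(Z)$.

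Assembling the pieces: $\Comp(u,v,u',v',X,Z,Z')$ holds for some $Z'\in\N$ if and only if (a) $X<v'$, i.e.\ $\Ost(X)\subseteq\{q_0,\dots,q_n\}$; (b) $\After$ holds for $Z'=g_\al(Z)$, i.e.\ $\Ost(Z)\subseteq\{q_{n+1},q_{n+2},\dots\}$; and (c) whenever $X\ge v$, i.e.\ (by the first step) whenever $q_n\in\Ost(X)$, the formula $\wt\After$ also holds for that $Z'$, i.e.\ $[q_{n+1}](Z)<a_{n+2}$. These are precisely the three displayed conditions, with the parenthetical justifications matching up term by term. I expect the only slightly delicate point in writing this out carefully to be the bookkeeping around the witness $Z'$: one must check, before assembling the equivalence, that the $Z'$ supplied by $\After$ is the very one required by $\wt\After$ in the case $X\ge v$, which is the reason I would isolate the observation that $\After$ and $\wt\After$ both pin $Z'$ down to $g_\al(Z)$.
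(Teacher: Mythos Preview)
Your proposal is correct and matches the paper's approach: the paper gives no explicit proof of this Fact, leaving the parenthetical hints ``(by $X<v'$)'', ``(by $\After$)'', ``(by $\wt\After$)'' in the statement itself as the entire justification, and your argument is precisely the unpacking of those hints via Fact~\ref{fact:After} and the standard bound $Y<q_{m+1}\Leftrightarrow\Ost(Y)\subseteq\{q_0,\dots,q_m\}$. Your care in noting that the witness $Z'$ is pinned down to $g_\alpha(Z)$ by both $\After$ and $\wt\After$ is a genuine point the paper glosses over, and is worth keeping.
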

\noindent In other words, $\Comp$ is satisfied if and only if $\Ost(X)$ and $\Ost(Z)$ can be directly concatenated at the point $v=q_{n}$ to form $\Ost(X+Z)$ (see~\eqref{eq:Ost}).

\bigskip

\section{Quadratic irrationals: Upper bound}

In this section we prove Theorem~\ref{th:quad-upper}. Let $\alpha\in \R$ be an irrational quadratic number.
We will show that for every $\al$-PA formula $F(\x)$, there is a finite automaton $\mathcal{A}$ such that $\mathcal{A}$ accepts precisely those words that are Ostrowski representations of numbers satisfying $F$. This will then allow to use automata-based decision procedure to decide $\al$-PA sentence. It should be emphasized that the tower height in Theorem~\ref{th:quad-upper} only depends on the number of alternating quantifiers, but not on the number of variables in the sentence.

\subsection{Finite automata and Ostrowski representations}

\noindent We first remind the reader of the definitions of finite automata and recognizability. For more details, we refer the reader to Khoussainov and Nerode~\cite{automata}. Let $\Sigma$ be a finite set. We denote by $\Sigma^*$ the set of words of finite length on $\Sigma$.

\begin{defi} A \emph{nondeterministic finite automaton} (NFA) $\mathcal{A}$ over $\Sigma$ is a tuple $(S,I,T,F)$, where $S$ is a finite non-empty set, called the set of states of $\mathcal{A}$, $I$ is a subset of $S$, called the set of initial states, $T\subseteq S \times \Sigma \times S$ is a non-empty set, called the transition table of $\mathcal{A}$ and $F$ is a subset of $S$, called the set of final states of $\mathcal{A}$. An automaton $\mathcal{A}=(S,I,T,F)$ is \textbf{deterministic} (DFA) if $I$ contains exactly one element, and for every $s\in S$ and $\sigma \in \Sigma$ there is exactly one $s' \in S$ such that $(s,\sigma,s')\in T$.
We say that an automaton $\mathcal{A}$ on $\Sigma$ \textbf{accepts} a word $w=w_n\dots w_1 \in \Sigma^*$ if there is a sequence $s_n,\dots, s_1,s_0 \in S$ such that $s_n \in I$, $s_0 \in F$ and for $i=1,\dots,n$, $(s_i,w_i,s_{i-1})\in T$. A subset $L\subseteq \Sigma^*$ is \textbf{recognized} by $\mathcal{A}$ if $L$ is the set of $\Sigma$-words that are accepted by $\mathcal{A}$. We say that $L\subseteq \Sigma^*$ is \textbf{recognizable} if $L$ is recognized by some DFA.
\end{defi}

\noindent
By the \emph{size} of an automaton, we mean its number of states.
It is well known that recognizability by NFA and DFA are equivalent:

\begin{factC}[{\cite[Theorem 2.3.3]{automata}}]\label{fact:NFA_DFA}
If $L$ is recognized by an NFA of size $m$, then $L$ is recognized by a DFA of size $2^{m}$.
\end{factC}

\noindent Let $\Sigma$ be a set containing $0$. Let $z=(z_1,\dots,z_n) \in (\Sigma^*)^n$ and let $m$ be the maximal length of $z_1,\dots,z_n$. We add to each $z_i$ the necessary number of $0$'s to get a word $z_i'$ of length $m$. The \textbf{convolution} of $z$ is defined as the word $z_1 * \dots * z_n \in (\Sigma^n)^*$ whose $i$-th letter is the element of $\Sigma^n$ consisting of the $i$-th letters of  $z_1', \dots, z_n'$.

\begin{defi} A subset $X\subseteq (\Sigma^*)^n$ is called \textbf{$\Sigma$-recognizable} if the set
\[
\bigl\{z_1*\dots *z_n \ : \ (z_1,\dots,z_n) \in X\bigr\} \quad \text{is $\Sigma^n$-recognizable.}
\]
\end{defi}

\noindent $\Sigma$-recognizable sets are closed under Boolean operations and first order quantifiers:

\begin{factC}[{\cite[\S2.3]{automata}}]\label{fact:automata_Boolean}
  If $X_{1},X_{2} \subseteq (\Sigma^{*})^{n}$ are recognized by DFAs of size $m_{1}$ and $m_{2}$, respectively, then:
\begin{enumerate}[label=\alph*)]
\item $X_{1}^{c}$ is recognized by a DFA of size $m_{1}$.
\item $X_{1} \cap X_{2}$, $X_{1} \cup X_{2}$ by DFAs of size $m_{1}m_{2}$.
\end{enumerate}
\end{factC}

\begin{fact}\label{fact:automata_qe}
  If $Z\subseteq(\Sigma^{*})^{n_{1}+n_{2}}$ is recognized by an NFA of size $m$, then
  $$X = \{x \in (\Sigma^{*})^{n_{1}} : \ex y \in (\Sigma^{*})^{n_{2}} \; (x,y) \in Z\} \text{ and } X^{c} = \{x \in (\Sigma^{*})^{n_{1}} : \for y \in (\Sigma^{*})^{n_{2}} \; (x,y) \in Z^{c}\}$$
  are recognized by DFAs of size $2^{m}$.
\end{fact}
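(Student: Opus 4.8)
The plan is to run the classical ``project, then determinize'' argument, with a single point of care concerning the zero-padding built into the convolution. First observe that the set denoted $X^{c}$ in the statement is exactly the set-theoretic complement of $X$ inside $(\Sigma^{*})^{n_{1}}$, since $x\notin X$ says $\neg\ex y\,(x,y)\in Z$, i.e.\ $\for y\,(x,y)\in Z^{c}$. So it suffices to produce a DFA of size $2^{m}$ recognizing $X$; Fact~\ref{fact:automata_Boolean}(a) then delivers a DFA of the same size for $X^{c}$, finishing the proof.

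Let $\mathcal{A}=(S,I,T,F)$ be an NFA with $|S|=m$ recognizing the convolution language $\widehat{Z}:=\{z_{1}*\cdots*z_{n_{1}+n_{2}}:(z_{1},\dots,z_{n_{1}+n_{2}})\in Z\}\subseteq(\Sigma^{n_{1}+n_{2}})^{*}$. I would build an NFA $\mathcal{B}$ over $\Sigma^{n_{1}}$ on the \emph{same} state set $S$, with existentially projected transitions — $(s,a,s')$ is a transition of $\mathcal{B}$ precisely when $(s,(a,b),s')\in T$ for some $b\in\Sigma^{n_{2}}$ — with initial set $I$, and with final set $F':=\{s\in S:\text{some state of }F\text{ is reachable from }s\text{ in }\mathcal{A}\text{ along edges labelled }(0^{n_{1}},b),\ b\in\Sigma^{n_{2}}\}$. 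Thus $F\subseteq F'$, and $F'$ is computed from $\mathcal{A}$ in polynomial time without adding states. The reason for using $F'$ rather than $F$ is the subtle point: when $(x,y)\in Z$ and some coordinate of $y$ is strictly longer than every coordinate of $x$, the word $\widehat{(x,y)}$ read by $\mathcal{A}$ is longer than the word $\widehat{x}$ read by $\mathcal{B}$, and on the surplus positions the $x$-block of $\widehat{(x,y)}$ is the all-zero letter $0^{n_{1}}$; the closure defining $F'$ lets $\mathcal{B}$ account for those trailing $0^{n_{1}}$-letters while staying within its $m$ states.

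It then remains to verify that $\mathcal{B}$ recognizes $\widehat{X}$ (up to the standard insensitivity to trailing $0^{n_{1}}$-letters, which the Ostrowski-representation languages in play enjoy), by the two routine inclusions: from an accepting run of $\mathcal{A}$ on $\widehat{(x,y)}$ one deletes the trailing block of positions carrying $0^{n_{1}}$ in the $x$-coordinates to get an accepting run of $\mathcal{B}$ on $\widehat{x}$, the state at the cut lying in $F'$ by virtue of the deleted $(0^{n_{1}},\cdot)$-segment; conversely, an accepting run of $\mathcal{B}$ on $\widehat{x}$ chooses a witness $b$ at each letter, and prolonging it by a witnessing $(0^{n_{1}},\cdot)$-path into $F$ yields an accepting run of $\mathcal{A}$ on a convolution $\widehat{(x',y')}$ with $(x',y')\in Z$ and $x'$ equal to $x$ up to trailing zeros, whence $x\in X$. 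Since $\mathcal{B}$ has $m$ states, Fact~\ref{fact:NFA_DFA} converts it to a DFA of size $2^{m}$ recognizing $X$, and then Fact~\ref{fact:automata_Boolean}(a) gives a DFA of size $2^{m}$ recognizing $X^{c}$. I expect the only real obstacle to be exactly this padding bookkeeping — in particular, settling, from the chosen convolution convention, whether the auxiliary $(0^{n_{1}},\cdot)$-closure belongs on the final states (as above) or on the initial states, and checking that the languages involved are insensitive to such padding; with that pinned down, everything reduces to the textbook subset construction.
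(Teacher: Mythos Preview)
Your approach is correct and matches the paper's: the paper's proof is a single sentence, ``Follows from Fact~\ref{fact:NFA_DFA} and~\ref{fact:automata_Boolean},'' without spelling out the projection construction or the padding issue you handle. Your careful treatment of the trailing-$0^{n_1}$ closure on the final states is exactly the detail the paper suppresses, and your observation that the application only needs this up to trailing-zero insensitivity (which the $\alpha$-recognizable languages satisfy by definition) is on point.
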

\begin{proof}
The set $X$ is recognized by an NFA of size $m$ (see \cite[Theorem 2.3.9]{automata}). Thus $X$ is recognized by an DFA of size $2^m$ by Fact~\ref{fact:NFA_DFA}. It follows from Fact~\ref{fact:automata_Boolean} that $X^c$ can be recognized by a DFA of size $2^m$.
\end{proof}

\medskip

\noindent Let $\alpha$ be a quadratic irrational. Since the continued fraction expansion  $[a_0;a_1,a_2,\dots]$ of $\alpha$ is periodic, it is bounded. Let $M\in \N$ be the maximum of the $a_i$'s. Set $\Sigma_{\alpha} := \{0,\dots,M\}$. Recall from Fact~\ref{f:Ostrowski} that every $N \in \N$ can be written uniquely as $N = \sum_{k=0}^{n} b_{k+1} q_{k}$ such that $b_n\neq 0$, $b_{k} \in \N$ such that $b_1<a_1$, $b_k \leq a_{k}$ and, if $b_k = a_{k}$, $b_{k-1} = 0$. We denote the $\Sigma_{\alpha}$-word $b_1\dots b_n$ by $\rho_{\alpha}(N)$.

\begin{defi} Let $X\subseteq \N^n$. We say that $X$ is \textbf{$\alpha$-recognized} by a finite automaton $\mathcal{A}$ over $\Sigma_{\alpha}^n$ if the set
\[
\{ \rho_{\alpha}(N_1)*\dots*\rho_{\alpha}(N_n) \ : \ (N_1,\dots,N_n) \in X, l_1,\dots,l_n \in \N\}
\]
is recognized by $\mathcal{A}$. We say $X$ is \textbf{$\alpha$-recognizable} if it is $\alpha$-recognized by some finite automaton.
\end{defi}

\noindent It follows easily from general facts about sets recognizable by finite automata that $\al$-recognizable sets are closed under boolean operations and coordinate projections (see \cite[Chapter 2.3]{automata}). A crucial tool is the following results from Hieronymi and Terry \cite{HT}.

\begin{thmC}[{\cite[Theorem B]{HT}}]\label{thm:HT}
  Let $\alpha$ be quadratic. Then
$\{ (x,y,z) \in \N^3 \ : \ x+y = z \}$ is $\alpha$-recognizable.
\end{thmC}

\noindent Next, recall $f_{\alpha}$ and $g_{\alpha}$ from~\eqref{eq:f} and Fact~\ref{fact:f}.

\begin{fact}\label{fact:alpharecorder} 
The sets \ts $A=\{ (u,v)\in \N^2 \, : \, u < v \}$ \ts and \ts  
$B=\{ (u,v)\in \N^2 \, : \, f_{\alpha}(u) < f_{\alpha}(v) \}$ \ts are $\alpha$-recognizable.
\end{fact}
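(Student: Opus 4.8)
The plan is to build small finite automata over $\Sigma_\alpha$ for each of the two relations directly, using the Ostrowski digit strings $\rho_\alpha(\cdot)$ and the basic facts from Section~\ref{sec:ctn_frac} on the signs and sizes of the differences $\be_n$. For the set $A$, I would first note that comparing two natural numbers $u,v$ by magnitude amounts to comparing their Ostrowski representations $\rho_\alpha(u) = b_1\dots b_m$ and $\rho_\alpha(v) = b'_1\dots b'_{m'}$: after padding to a common length with trailing zeros (which is exactly what the definition of $\alpha$-recognition allows), $u<v$ iff, reading the digit strings from the most significant end, the first position at which they differ has $b_j < b'_j$ — with the caveat that this naive lexicographic comparison is only valid because of the admissibility conditions in Fact~\ref{f:Ostrowski} (the constraint $b_n=0$ whenever $b_{n+1}=a_{n+1}$), which prevent the usual ``carrying'' ambiguities. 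So the automaton reads the convolution $\rho_\alpha(u)*\rho_\alpha(v)$ from least significant to most significant digit (this is the reading direction fixed by our automaton convention, $w=w_n\dots w_1$), and tracks in its state which of $u<v$, $u=v$, $u>v$ currently holds for the suffix read so far; it also must use a bounded counter modulo the period $k$ of the continued fraction so that it ``knows'' which index $a_i$ governs the current admissibility constraint. Since $M$ and $k$ depend only on $\alpha$, this is a fixed finite automaton.

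For the set $B$, the key identity is Fact~\ref{fact:f}: $f_\alpha(u) = \sum_{n\ge 0} b_{n+1}\be_n$, and similarly for $v$. Thus $f_\alpha(u) < f_\alpha(v)$ iff $\sum_{n\ge0}(b'_{n+1}-b_{n+1})\be_n > 0$. To decide the sign of this alternating-ish sum with a finite automaton, I would exploit the decay estimates~\eqref{eq:odd_even} and~\eqref{eq:beta_dec}: the $|\be_n|$ are strictly decreasing and alternate in sign, and more precisely~\eqref{eq:sum_beta} controls the tail $\sum_{n\ge N}$ in terms of $\be_{N-1}$. The strategy is essentially the same lexicographic idea but adapted to the signed weights $\be_n$: reading digit-difference pairs from least significant to most significant, the automaton tracks the sign of the partial tail sum $\sum_{n\ge N}(b'_{n+1}-b_{n+1})\be_n$. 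Because of the alternation and the geometric-type decay, whether this tail is positive, negative, or ``still undetermined'' can be updated from one step to the next using only: the current digit difference, the parity of $N$ (for the sign of $\be_N$), and $N \bmod k$ (since the $\be_n$ satisfy a linear recurrence with period-$k$ coefficients, by~\eqref{eq:rec}); one also needs a bounded lookahead window because a single large digit difference at step $N$ can dominate all smaller-index contributions. All of this state information is bounded in terms of $\alpha$ alone, giving a fixed automaton.

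The main obstacle — and the place where I would spend the most care — is making the sign-tracking for $B$ rigorous: one must show that a \emph{bounded} amount of state genuinely suffices to determine $\sgn\sum_{n\ge0}(b'_{n+1}-b_{n+1})\be_n$ as the digits stream in. The subtlety is that the $\be_n$ do not decay by a fixed ratio (the $a_i$ can be large, though periodic), so a single digit difference at a high index really can outweigh the entire accumulated contribution from lower indices; one needs~\eqref{eq:sum_beta} in the form ``$|\be_{N}| = a_{N+2}|\be_{N+1}| + a_{N+4}|\be_{N+3}| + \dots$'' to argue that once the more-significant digits have been read, the contribution of all less-significant digits is bounded by $|\be_{N}|$ in absolute value, so the sign is decided by the first index (reading from the most significant end) where the weighted comparison is ``strict enough.'' Packaging this into a transition rule that runs least-significant-first requires either reversing the reading order (and appealing to the fact that recognizability is closed under reversal) or carrying a bounded buffer of recent digit differences; I would take the former route for cleanliness. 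Everything else — closure under the relevant operations, padding with trailing zeros, the periodicity bookkeeping — is routine given the machinery already set up in this section and in Section~\ref{sec:ctn_frac}.
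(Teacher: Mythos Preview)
Your treatment of $A$ is essentially the paper's: lexicographic comparison on the Ostrowski words, with a mod-$k$ counter if one wants to also enforce admissibility. Fine.

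For $B$, the paper's argument is much simpler than your sign-tracking scheme. The key observation (the paper cites \cite[Fact~2.13]{H}) is that $f_\alpha(u)$ versus $f_\alpha(v)$ is decided \emph{entirely} by the single smallest index $n$ at which the Ostrowski digits of $u$ and $v$ differ: if $n$ is odd then $b_n<b'_n \Leftrightarrow f_\alpha(u)<f_\alpha(v)$, and if $n$ is even the inequality flips. There is no ``strict enough'' threshold, no buffer of recent digits, no appeal to closure under reversal --- it is a pure lexicographic comparison from the low-index end, with parity determining the direction. The reason this works is that admissibility of the Ostrowski digits (Fact~\ref{f:Ostrowski}) together with \eqref{eq:sum_beta} forces the tail $\sum_{k>n}(b'_k-b_k)\beta_{k-1}$ to have absolute value strictly less than $|\beta_{n-1}|$, so any nonzero difference at index $n$ already settles the sign. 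Once you know this, the automaton is immediate: reading the convolution from high index down to low (which is the reading direction in the paper's automaton convention), one simply remembers the comparison result at the most recently seen position of disagreement together with that position's parity.

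Your approach is salvageable in spirit, but as written it is tangled about which end is ``significant.'' The $|\beta_n|$ are strictly decreasing, so the \emph{low}-index digits carry the largest weight in $f_\alpha$; your sentence ``a single large digit difference at step $N$ can dominate all smaller-index contributions'' is false for this reason --- no higher-index digit can ever overturn a difference already seen at a lower index. That inversion is what makes you reach for buffers and reversal that are not needed. If you replace your partial-sum tracking with the clean characterization above, the whole construction collapses to a few states.
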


\begin{proof}
For $A$, note that $u < v$ if and only if $\rho_{\alpha}(u)$ is lexicographically smaller than $\rho_{\alpha}(v)$ when read from right to left.
For $B$, let $u,v\in \N$. Take $b_1,b_1',b_2,b_2',\dots \in \N$ such that $\rho_{\alpha}(u) = b_{1}\; b_{2} \dots $ and $\rho_{\alpha}(v) = b'_{1}\; b'_{2} \dots $. Let $n$ be the smallest index where $b_{n} \neq b'_{n}$. It follows easily  from \eqref{eq:odd_even} and Fact \ref{fact:f} that when
$$
\gathered
n \text{ odd} \; : \; b_{n} < b'_{n} \quad \text{if and only if} \quad  f_{\al}(u) < f_{\al}(v),\\
n \text{ even} \; : \; b_{n} < b'_{n} \quad \text{if and only if} \quad  f_{\al}(u) > f_{\al}(v).
\endgathered
$$
(see~\cite[Fact~2.13]{H}). It is an easy exercise to construct a finite automaton that $\alpha$-recognizes $\{ (u,v)\in \N^2 \ : \ f_{\alpha}(u) < f_{\alpha}(v) \}$.
\end{proof}

\begin{lem}\label{lem:snap}
The graph of the function $g_{\alpha} : \N \to \N$ is $\alpha$-recognizable.
\end{lem}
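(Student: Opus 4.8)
The plan is to show that the graph $\{(X,U) \in \N^2 : g_\alpha(X) = U\}$ is $\alpha$-recognizable by expressing it in terms of relations already known to be $\alpha$-recognizable. First I would recall the defining property of $g_\alpha$ from \eqref{eq:f}: $U = g_\alpha(X)$ if and only if $-\frac{1}{\zeta_\alpha} \le \alpha X - U < 1 - \frac{1}{\zeta_\alpha}$, equivalently $f_\alpha(X) = \alpha X - U$. The key observation, from Fact~\ref{fact:f}, is that if $X = \sum_{n\ge 0} b_{n+1} q_n$ is the Ostrowski representation of $X$, then $f_\alpha(X) = \sum_{n\ge 0} b_{n+1}\beta_n$ and $g_\alpha(X) = \sum_{n\ge 0} b_{n+1}p_n$. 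So $U = g_\alpha(X)$ exactly when $U$ and $X$ have "the same Ostrowski digit string" in the sense that if $\rho_\alpha(X) = b_1 b_2 \cdots$, then $U$ is the integer whose digit string (with respect to the $p_n$ basis rather than the $q_n$ basis) is the same $b_1 b_2 \cdots$. However, $U$ is itself a natural number with its own Ostrowski representation, so this is not literally a digit-wise condition; I need to relate the two numerations.

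The cleaner route is to characterize $g_\alpha(X)=U$ purely through the already-established $\alpha$-recognizable relations. Note $\alpha X = f_\alpha(X) + g_\alpha(X)$ and similarly $\alpha U = f_\alpha(U) + g_\alpha(U)$. The plan is to use the identity $\alpha X - U = f_\alpha(X)$, combined with the fact that $f_\alpha(X) \in I_\alpha = [-\frac{1}{\zeta_\alpha}, 1-\frac{1}{\zeta_\alpha})$. Since $g_\alpha(X)$ is the unique $U \in \N$ with $\alpha X - U \in I_\alpha$, I would express: $g_\alpha(X) = U$ iff ($U \le X$ — since $p_n \le q_n$, actually $p_n < q_n$ for the relevant range, hence $g_\alpha(X) \le X$, with some care about small indices) and $\alpha X - U \in I_\alpha$. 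The containment $\alpha X - U \in I_\alpha$ can in turn be tested via the $\alpha$-recognizable order relations: $\alpha X - U < 1 - \frac{1}{\zeta_\alpha}$ and $\alpha X - U \ge -\frac{1}{\zeta_\alpha}$. The quantity $\alpha X - U$ compared against the fixed endpoints of $I_\alpha$ should be expressible by comparing $f_\alpha$ of related quantities, using Fact~\ref{fact:alpharecorder} which gives $\alpha$-recognizability of $\{(u,v) : f_\alpha(u) < f_\alpha(v)\}$, together with Theorem~\ref{thm:HT} ($\alpha$-recognizability of addition) to handle $X - U$ or $U$ as a sum.

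Concretely, I would proceed as follows. Step 1: observe that $U = g_\alpha(X)$ implies $U \le X$ (from $p_n \le q_n$ and a finite check on the first period), so WLOG write $X = U + W$ for some $W \in \N$, and the condition becomes $f_\alpha(X) = \alpha X - U$, i.e. $\alpha W + f_\alpha(U) = \alpha X - U + $ (correction term depending on whether $f_\alpha(U) + f_\alpha(W)$ stays in $I_\alpha$). Step 2: reduce "$\alpha X - U \in I_\alpha$" to a Boolean combination of $f_\alpha$-comparisons between $\alpha$-recognizable arguments. The point is that $-\frac{1}{\zeta_\alpha}$ and $1 - \frac{1}{\zeta_\alpha} = \frac{\zeta_\alpha - 1}{\zeta_\alpha}$ are the endpoints, and $-\frac{1}{\zeta_\alpha} = \alpha \cdot 0 - 0$ shifted, or more usefully $f_\alpha(0) = 0$ and the endpoints are limits of $f_\alpha$-values, so membership in $I_\alpha$ of $\alpha X - U$ can be rephrased as: $U$ is the unique integer so that $\alpha X - U$ lands in the fundamental domain, which is exactly the statement that $U = g_\alpha(X)$ and is forced by $f_\alpha(X) \le \alpha X - U' $ failing for $U' \ne U$. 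Step 3: assemble these into a single formula over $\alpha$-recognizable predicates and invoke closure of $\alpha$-recognizable sets under Boolean operations and projection.

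The hard part will be Step 2: making the condition "$\alpha X - U \in I_\alpha$" genuinely $\alpha$-recognizable rather than circular. The risk is that one only restates $U = g_\alpha(X)$. The way to break the circularity is to use the digit-level description: $g_\alpha(X) = U$ iff the Ostrowski digit string of $X$ in base $(q_n)$ equals the "digit string" of $U$ in base $(p_n)$; since for each fixed residue of $n$ mod $k$ (the period) the pair $(p_n, q_n)$ satisfies a linear recurrence with the \emph{same} transition matrix (Fact~\ref{fact:div_k}, Remark~\ref{rem:gamma}), an automaton reading $\rho_\alpha(X)$ can simultaneously track the partial sum $\sum b_{n+1}p_n$ modulo the appropriate carries and verify it equals $U$ read in the $(p_n)$ numeration — this is a finite-state computation because the relationship between the $q_n$-numeration of $X$ and the $q_n$-numeration of $U = \sum b_{n+1}p_n$ is governed by the bounded partial quotients and a bounded carry. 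I expect this digit-tracking construction, rather than the analytic $I_\alpha$-membership argument, is the one the authors actually use, and I would present it by analogy with the proof of Fact~\ref{fact:alpharecorder} for the relation $f_\alpha(u) < f_\alpha(v)$, noting that the construction of the explicit automaton is "an easy exercise" once the digit-level equivalence is established.
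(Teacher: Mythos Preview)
Your proposal has a genuine gap: neither of your two approaches supplies the key idea that makes the proof go through.

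Your first approach tries to express $U=g_\alpha(X)$ as the condition $\alpha X-U\in I_\alpha$ and then reduce this to already-known $\alpha$-recognizable predicates. You correctly flag the risk of circularity, and indeed it is circular: the only tools available at this point for handling an inequality involving $\alpha X$ are the comparison $f_\alpha(u)<f_\alpha(v)$ from Fact~\ref{fact:alpharecorder} and addition from Theorem~\ref{thm:HT}. But $f_\alpha(X)=\alpha X-g_\alpha(X)$, so any attempt to express $\alpha X-U$ via $f_\alpha$ already presupposes $g_\alpha$. (The general $\alpha$-recognizability of linear inequalities with $\Z[\alpha]$-coefficients is established later in Proposition~\ref{prop:qe_free}, and that proposition \emph{uses} the present lemma.)

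Your second approach asserts that converting the digit string $b_1b_2\cdots$ of $X$ into the $q_n$-Ostrowski digits of $U=\sum b_{n+1}p_n$ is a finite-state process with ``bounded carry'', and you cite Fact~\ref{fact:div_k} and Remark~\ref{rem:gamma} as justification. But those facts only say that $(p_{n+i},q_{n+i})$ is an integer combination of $(p_n,q_n)$ and $(p_{n+1},q_{n+1})$; they do not by themselves bound the carry when you try to rewrite $\sum b_{n+1}p_n$ in the $(q_n)$-basis. Your proposal never makes the needed relation explicit.

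The paper's proof uses a fact you cite but do not exploit. By Fact~\ref{fact:nice_rec} (which relies on the periodicity of the continued fraction of $\alpha$), there are rationals $\mu,\mu'$ with $p_n=\mu q_n+\mu' q_{n+k}$ for all $n$. Summing against the Ostrowski digits of $X$ gives
\[
g_\alpha(X)=\sum_n b_{n+1}p_n=\mu\sum_n b_{n+1}q_n+\mu'\sum_n b_{n+1}q_{n+k}=\mu X+\mu'\,\Shift(X),
\]
where $\Shift(X)$ is the integer whose Ostrowski word is $0^k b_1 b_2\cdots$. The map $X\mapsto\Shift(X)$ is trivially $\alpha$-recognizable (a $k$-digit shift), and a rational linear combination of $\alpha$-recognizable functions is $\alpha$-recognizable by Theorem~\ref{thm:HT}. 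This single identity replaces your unproven ``bounded carry'' claim and makes the lemma a two-line argument.
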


\begin{proof}
We can assume that $\alpha$ is purely periodic, with minimum period $k$ (see Section~\ref{sec:periodic}).
By Fact~\ref{fact:nice_rec}, there are $\mu,\mu' \in \Q$ such that
\begin{equation*}
p_{n} = \mu q_{n} + \mu' q_{n+k} \quad \text{ for every } n \ge 0.
\end{equation*}
For $x \in \N$ with Ostrowski representation
$x = \sum_{n=0}^{N}b_{n+1}q_{n}$
we define:
$$\Shift(x) \;\coloneqq \; \sum_{n=0}^{N}b_{n+1}q_{n+k}.$$
In other words, if $\rho_{\alpha}(x) = b_{1}\; b_{2} \dots$, then $\rho_{\alpha}(\Shift(x)) = 0^{k}\; b_{1}\; b_{2} \dots$.
So $x \mapsto \Shift(x)$ is clearly $\alpha$-recognizable.
By Fact~\ref{fact:f}:
$$
g_{\alpha}(x) \;=\; \sum_{n=0}b_{n+1}p_{n} \;=\; \sum_{n=0} b_{n+1} (\mu q_{n} + \mu' q_{n+k}) \;=\; \mu \, x + \mu' \, \Shift(x).
$$
Since $g_{\alpha}(x)$ is a linear combination of $x$ and $\Shift(x)$, we see that $g_{\alpha}$ is $\alpha$-recognizable.
\end{proof}

\begin{prop}\label{prop:qe_free}
Let $F(\x)$ be a quantifier-free $\alpha$-\textup{PA} formula with variables $\x \in \Z^{d}$.
Then there is a NFA $\mathcal{A}$ of size $2^{\delta \, \ell(F)}$ that $\alpha$-recognizes $F$, in the sense that:
\begin{equation}\label{eq:NFA_qe_free}
\{ \x \in \Z^{d} \ : \ F(\x) = \text{true} \} = \{\x \in \Z^{d} \ : \ \ex \x' \in \Z^{d'} \  \mathcal{A} \text{ accepts } (\x,\x')\}.
\end{equation}
Here $\x' \in \Z^{d'}$ is a tuple of some auxiliary variables of length $d' \le \delta d$. $\mathcal{A}$ has the extra property that it accepts at most one tuple $(\x,\x') \in \Z^{d+d'}$ for every $\x \in \Z^{d}$.
Finally, the constant $\delta$ only depends on $\alpha$.
\end{prop}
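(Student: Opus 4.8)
The plan is to reduce an arbitrary quantifier-free $\al$-PA formula $F(\x)$ to a Boolean combination of a small number of "atomic" relations, each of which is already known to be $\al$-recognizable (or recognizable after introducing auxiliary variables), and then close under the Boolean operations using the size bounds in Fact~\ref{fact:automata_Boolean}. First I would observe that every atom of $F$ is a linear inequality $\sum_i \gamma_i x_i \le \gamma_0$ with $\gamma_i \in \Z[\al]$. Writing $\gamma_i = a_i + b_i\al$ (using that $\al$ is quadratic, so $\Z[\al]$ has rank $2$; for the general quadratic case, reduce $\al^2$ using the defining polynomial), such an inequality becomes a comparison of the form $L_1 + \al L_2 \le c + \al c'$ where $L_1 = \sum a_i x_i$ and $L_2 = \sum b_i x_i$ are integer linear forms in $\x$. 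Introducing auxiliary variables for $L_1$, $L_2$, and for each $\al x_i = f_\al(x_i) + g_\al(x_i)$, I can rewrite the atom so that the only "nonlinear" ingredients are: integer addition (Theorem~\ref{thm:HT}), the graph of $g_\al$ (Lemma~\ref{lem:snap}), the ordering $u<v$ and the ordering $f_\al(u) < f_\al(v)$ (Fact~\ref{fact:alpharecorder}), and multiplication by a fixed integer constant (a trivial automaton). Crucially, the number of auxiliary variables introduced is linear in the number of atoms and the description length of the coefficients, hence $\le \delta d$ for a constant $\delta$ depending only on $\al$ (it absorbs the period length $k$ and the bound $M$ on the partial quotients).

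The key steps in order: (1) normalize each atom into the form above and allocate auxiliary variables $\x'$; (2) express each normalized atom as a conjunction of the atomic $\al$-recognizable relations listed above, composed via these auxiliary variables — here one must be slightly careful that $f_\al$ takes values in the interval $I_\al$, so comparing $L_1 + \al L_2$ against a constant requires tracking the integer "carry" $g_\al$ of $\al L_2$, which is exactly what Lemma~\ref{lem:snap} provides; (3) take the Boolean combination dictated by the structure of $F$, applying Fact~\ref{fact:automata_Boolean}(a),(b); (4) bound the size: each atomic automaton has size bounded by a constant depending only on $\al$, the number of atomic conjuncts per atom is $O(1)$, the number of atoms is $O(\ell(F))$, and each Boolean $\land,\lor$ multiplies sizes, while complementation requires first determinizing an NFA — but we only ever need to determinize at the very end, so the accumulated size is at most $2^{\delta\,\ell(F)}$ for a suitable $\delta=\delta(\al)$. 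Finally, to get the "accepts at most one tuple $(\x,\x')$ per $\x$" property, note that every auxiliary variable is a \emph{function} of $\x$ (it equals some explicit term like $L_1(\x)$, $g_\al(L_2(\x))$, $\Shift$, etc.), so for each $\x$ there is a unique admissible $\x'$; intersecting $\mathcal{A}$ with the graphs of these functions preserves this uniqueness, and only increases the size by a constant factor.

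The main obstacle I expect is bookkeeping the size bound through the reduction, specifically making sure the constant $\delta$ genuinely depends only on $\al$ and not on $F$. The subtle point is that the coefficients $\gamma_i$ appear in the input in binary, so a single coefficient of bit-length $\ell$ contributes $\ell$ to $\ell(F)$; an inequality $\sum \gamma_i x_i \le \gamma_0$ must therefore be broken down so that multiplication by each $\gamma_i$ is realized by an automaton of size polynomial in $\ell(\gamma_i)$ (iterated doubling and addition), which is where the $2^{O(\ell(F))}$ — rather than, say, $2^{\polyin(\ell(F))}$ in a worse model — comes from: each such multiplication automaton has size $\polyin(\ell(\gamma_i))$, and the product of these over all atoms is $2^{O(\ell(F))}$ after taking logs. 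The other place requiring care is that $g_\al$ and $\Shift$ in Lemma~\ref{lem:snap} are defined relative to the \emph{purely periodic} normalization of $\al$, so one must first apply the reduction of Section~\ref{sec:periodic} replacing $\al$ by its purely periodic associate $\gamma$; since $\al$-PA and $\gamma$-PA sentences are interchangeable with only a constant-size blow-up, this is harmless and again folded into $\delta$. Everything else — the closure facts, the NFA-to-DFA conversion, the interval arithmetic for $f_\al$ — is routine given the tools already assembled in Sections~\ref{sec:ctn_frac} and the present section.
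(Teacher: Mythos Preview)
Your proposal is correct and follows essentially the same route as the paper: reduce $\Z$-variables to pairs of $\N$-variables, normalize each atom to the shape $u+\alpha v \le w+\alpha z$ together with integer-linear equalities $u=\bar a\,\y$, break the latter into single additions via binary expansion of the coefficients (introducing auxiliary variables that are \emph{functions} of $\x$, which is exactly what gives the uniqueness of $\x'$), handle the former via $g_\alpha$ and the $f_\alpha$-ordering, and then take the product of the resulting constant-size DFAs using Fact~\ref{fact:automata_Boolean}. One small slip in your size accounting: the automaton realizing multiplication by a coefficient of bit-length $\ell(\gamma_i)$ is a product of $O(\ell(\gamma_i))$ constant-size addition DFAs and so has size $c^{O(\ell(\gamma_i))}$, i.e.\ exponential (not polynomial) in $\ell(\gamma_i)$ --- but the product over all atoms is still $c^{O(\sum_i \ell(\gamma_i))}=2^{O(\ell(F))}$, so your final bound stands, and no NFA-to-DFA conversion is actually needed anywhere.
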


\begin{proof}
    Each single variable $x$ in $F$ takes value over $\Z$, but can be replaced by $x_{1}-x_{2}$ for two variables  $x_{1},x_{2} \in \N$.
  Hence, we can assume that all variables take values over $\N$. It is easy to see that we can assume $F$ to be negation free.
 Recall that coefficients/constants in $\Z[\al]$ are given in the form $c\al + d$ with $c,d \in \Z$.
  Hence, each inequality in $F$ can be reorganized into the form:
  $$\a\,\y \; + \; \al\. \b\, \z \;\; \le \;\; \c\,\t  \; + \; \al\. \d\,\w. $$
Here $\a,\b,\c,\d$ are tuples of coefficients in $\N$, and $\y,\z,\t,\w$ are subtuples of $\x$.
For each homogeneous term $\a\,\y$, we use an additional variable $u = \a\,\y$ and replace each appearance of $\a\,\y$ in the inequalities by $u$.
Note that the length $\ell(F)$ grows at most linearly after adding all such extra variables.
The atoms in our formulas are either equalities of the form:
\begin{equation}\label{eq:star}
\tag{$\star$} \quad u = \a\,\y
\end{equation}
or inequalities of the form:
\begin{equation}\label{eq:dstar}
\tag{$\star\star$} \quad u + \al v \; \le \; w + \al z.
\end{equation}



\noindent We can rewrite each equality $u = \a\,\y$ into single additions by utilizing binary representations of the coefficients.
For example, the equality $u = 5y + 2z$ can be replaced by the following conjunction:
$$
y_{1} = y+y \; \land \; y_{2}=y_{1}+y_{1} \; \land \; y_{3}=y_{2}+y \; \land\; z_{1}=z+z \;\land\; u = y_{3} + z_1\ts.
$$
Note that the number of variables we introduce is linear in the length of the binary representation of $\a$. So we still $\ell(F)$ grows linearly when we introduce the new variables.
\smallskip

\noindent We have that $\alpha x = f_{\alpha}(x) + g_{\alpha}(x)$ for every $x \in \Z$.
Here $g_{\alpha}(x) \in \Z$ and $f_{\alpha}(x)$ always lies in the unit length interval $I_{\alpha}$.
For $u,v,w,z \in \NN$, we have $u+\al v < w + \al z$ if and only if:
\[
u + g_{\alpha}(v) < w + g_{\alpha}(z),\quad \text{or} \quad u + g_{\alpha}(v) = w + g_{\alpha}(z) \wedge  f_{\alpha}(v) < f_{\alpha}(z).\footnote{The case of a sharp inequality can be handled similarly.}
\]
Now we see that each atom in $F$, which is of type~\eqref{eq:star} or~\eqref{eq:dstar}, can be substituted by a Boolean combinations of simpler operation/functions, namely $f_{\al}$, $g_{\al}$, single additions $x+y=z$ and comparisons $x<y$.
We collect into a tuple $\x' \in \Z^{d'}$ all the auxiliary variables introduced in these substitutions. Observe that the total length $\ell(F)$, which also includes $d'$, only increased by some linear factor $\delta = \delta(\alpha)$.

\smallskip

\noindent
By Theorem~\ref{thm:HT}, Fact~\ref{fact:alpharecorder} and Lemma~\ref{lem:snap} each of the above simpler operations/functions can be recognized by a DFA of constant size. Using Fact~\ref{fact:automata_Boolean}, we can combine those DFA to get a DFA of size $2^{\gamma\, \ell(F)}$ that recognizes $F$ in the sense of~\eqref{eq:NFA_qe_free}.
Note that for each value $\x \in \Z^{d}$ of the original variables, the auxiliary $\x' \in \Z^{d'}$ are uniquely determined by $\x$.
\end{proof}


\begin{cor} Let $S\subseteq \N^n$ be $\al$-\textup{PA} definable. Then $S$ is $\al$-recognizable.
\end{cor}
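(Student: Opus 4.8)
The plan is to prove by induction on the quantifier structure of an $\al$-PA formula that any $\al$-PA definable set is $\al$-recognizable, using Proposition~\ref{prop:qe_free} as the base case. Let $S \subseteq \N^n$ be $\al$-PA definable, say $S = \{\mathbf{X} : F(\mathbf{X})\}$ for an $\al$-PA formula $F(\x)$ of the form $Q_1 \y_1 \dots Q_r \y_r \; \Phi(\y_1,\dots,\y_r,\x)$. First I would reduce to the case where all quantified and free variables range over $\N$ rather than $\Z$ (as in the proof of Proposition~\ref{prop:qe_free}, a $\Z$-variable becomes a difference of two $\N$-variables, and a $\Z$-quantifier becomes two $\N$-quantifiers with an appropriate Boolean adjustment; this only changes the formula in a bounded way and does not affect $\al$-recognizability of the final set). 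Then the matrix $\Phi$ is quantifier-free, so by Proposition~\ref{prop:qe_free} the set
\[
\{(\y_1,\dots,\y_r,\x) : \Phi \text{ holds}\}
\]
is $\al$-recognizable (the auxiliary variables $\x'$ there can be absorbed by an existential projection, which preserves $\al$-recognizability).

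Next I would peel off the quantifiers one at a time, from the innermost outward. The key closure facts are already recorded right before the statement: $\al$-recognizable sets are closed under Boolean operations and under coordinate projections — the latter being exactly $\ex$ over an $\N$-block of variables — and closure under complement turns a $\for$-quantifier into $\neg \ex \neg$. Concretely, if $\{(\y_i,\dots,\y_r,\x) : \psi\}$ is $\al$-recognized by some automaton, then $\{(\y_{i+1},\dots,\y_r,\x) : \ex \y_i\, \psi\}$ is $\al$-recognized (projection) and $\{(\y_{i+1},\dots,\y_r,\x) : \for \y_i\, \psi\}$ is $\al$-recognized (complement, projection, complement). Iterating $r$ times eliminates $\y_r,\dots,\y_1$ and leaves $S = \{\mathbf{X} : F(\mathbf{X})\}$ itself as an $\al$-recognizable subset of $\N^n$.

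One technical point that needs a moment of care — and is the only real obstacle — is the handling of the trailing zeros in the definition of $\al$-recognition. A set $X \subseteq \N^m$ is $\al$-recognized by $\mathcal{A}$ if $\mathcal{A}$ accepts exactly the tuples $(\rho_\al(N_1)0^{l_1},\dots,\rho_\al(N_m)0^{l_m})$ for $(N_1,\dots,N_m) \in X$; when we project away a coordinate, the resulting language over the smaller alphabet must still be padding-closed in this sense. This is the standard subtlety in automata-theoretic treatments of arithmetic (one normalizes by adding a loop that absorbs trailing zeros), and it is subsumed by the assertion quoted from \cite[Chapter 2.3]{automata} that $\al$-recognizable sets are closed under Boolean operations and coordinate projections; I would simply invoke that. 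No new estimates are needed — the corollary is a purely qualitative consequence of Proposition~\ref{prop:qe_free} together with the cited closure properties — so the proof is short: set up the $\Z$-to-$\N$ reduction, apply Proposition~\ref{prop:qe_free} to the matrix, and induct on the quantifier blocks using closure under projection and complement.
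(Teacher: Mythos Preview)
Your proposal is correct and follows essentially the same route as the paper: apply Proposition~\ref{prop:qe_free} to the quantifier-free matrix and then eliminate the quantifier blocks one at a time using the closure of $\al$-recognizable sets under projection and complement (the paper simply cites Fact~\ref{fact:automata_qe} for this step). Your treatment is more detailed---in particular the $\Z$-to-$\N$ reduction and the remark on trailing-zero padding---but the argument is the same; one small slip is that your indexing when peeling quantifiers should run from the innermost $\y_r$ outward to $\y_1$, not the other way around.
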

\begin{proof}
Follows directly from the above proposition, combined with Fact~\ref{fact:automata_qe} for quantifier elimination.
\end{proof}

\begin{proof}[Proof of Theorem~\ref{th:quad-upper}]
  Consider an $\al$-PA sentence $S$ of the form~\eqref{eq:sentence}.
  Without loss of generality, we  can change domains from $\x_{i} \in {\Z^{n_{i}}}$ to $\x_{i}\in\N^{n_{i}}$, as shown in the proof of Proposition~\ref{prop:qe_free}.
  Also by negating $S$ if necessary,
  we can assume that $Q_{r} = \ex$.
  By Proposition~\ref{prop:qe_free}, there is an NFA $\mathcal A$ of size $2^{\delta\, \ell(F)}$ that $\alpha$-recognizes the quantifier-free part $\Phi(\x_{1},\dots,\x_{r})$ in the sense of~\eqref{eq:NFA_qe_free}.
  We can rewrite:
  \[
  \{ \x_{1} \. : \. Q_{2} \x_{2} \dots Q_{r} \x_{r} \; \Phi(\x_{1},\dots,\x_{r})\} = \{\x_{1}  \. : \. Q_{2} \x_{2} \dots Q_{r} \x_{r} \, \ex \x' \; \mathcal{A} \text{ accepts } (\x_{1},\dots,\x_{r},\x')\}.
  \]
  Since $Q_{r}=\ex$, we can group $\x_{r}$ and $\x'$ into one quantifier block.
  Repeatedly applying Fact~\ref{fact:automata_Boolean} and~\ref{fact:automata_qe} one after another, we can successively eliminate all $r-1$ quantifier blocks.
  This blows up the size of $\mathcal{A}$ by at most $r-1$ exponentiations.
  The resulting DFA $\mathcal{A}'$ has size at most a tower of height $r$ in $\delta\,\ell(F)$, and satisfies:
  $$
\{ \x_{1} \. : \. Q_{2} \x_{2} \dots Q_{r} \x_{r} \; \Phi(\x_{1},\dots,\x_{r})\} = \{\x_{1} \. : \. \mathcal{A'} \text{ accepts } \x_{1}\}
  $$
So deciding $S$ is equivalent deciding the whether ``$Q_{1}\x_{1} \; \mathcal{A'} \text{ accepts } \x_{1}$''. Note that $Q_{1}$ can be $\ex$ or $\for$.
However, since $\mathcal{A'}$ is deterministic, we can freely take its complement without blowing up its size. Thus, we can safely assume $Q_{1} = \ex$. Now, viewing $\mathcal{A'}$ as a directed graph, the sentence ``$\ex\x_{1} \; \mathcal{A'} \text{ accepts } \x_{1}$'' can be easily decided by a breadth first search argument. This can be done in time linear to the size of $\mathcal{A'}$.
\end{proof}


\bigskip

\section{Quadratic irrationals: $\PSPACE$-hardness}\label{sec:pspace}

We now give a proof of Theorem~\ref{th:quad-pspace}. Throughout this section, fix a $\pspace$-complete language  $\L \subseteq \{0,1\}^{*}$ and a $1$-tape Turing Machine (TM) $\M$ that decides $\L$. 
This means that given a finite input word $x \in \{0,1\}^{*}$ on its tape $\Tape$, the Turing machine $\M$ will run in space $\polyin(|x|)$ and output $1$ if $x \in \L$ and $0$ otherwise.

\medskip

The main technical theorem we establish in this section is the following:

\begin{thm}\label{th:pspace}
Let $\al\in \Qrc$ be a quadratic irrational. For every $s \in \N$, there is a map $X:\{0,1\}^{s} \to \N$ and an $\exists^{6}\forall^{4}\exists^{11}$  $\alpha$-\textup{PA} formula $\textup{\bf Accept}$ such that:
\[
 \textup{\bf Accept}(X(x)) \text{ holds if and only if } x \in \L.
\]
Both $X(x)$ and $\textup{\bf Accept}$ can be computed in time $O(s^c)$ for all $x \in \{0,1\}^s$ and all $s \in \N$, where the constant $c$ only depends on $\alpha$. Furthermore, the number of inequalities in $\textup{\bf Accept}$ only depends on $\alpha$ but not on $s$.
\end{thm}

%

\noindent The main argument of the proof of Theorem~\ref{th:pspace}  translates Turing machine computations into Ostrowski representations of natural numbers. This argument is presented in Subsection~\ref{sec:pspace_proof}. An explicit bound on the number of variables and inequalities for the constructed sentences are then given in Subsection~\ref{sec:analysis}, where we also treat the case $\al = \sqrt{2}$. Theorem \ref{th:quad-pspace} follows.

\medskip

\noindent
Before proving Theorem~\ref{th:pspace}, we construct in Subsection~\ref{sec:tools} some explicit $\alpha$-PA formulas to deal with the Ostrowski representation, exploiting the periodicity of the continued fraction expansion of $\al$.
Then we recall the definitions of Turing machine computations in Subsection~\ref{sec:pspace_basic}.

\subsection{Ostrowski representation for quadratic irrationals}\label{sec:tools}
Let $\al$ be a quadratic irrational. Recall from Section~\ref{sec:periodic} that we only need to consider a purely periodic $\alpha$ with minimum period $k$. Set $K := \lcm(2,k)$ and keep this $K$ for the remainder of this section.

\medskip

\noindent We first construct $\al$-PA formula that defines the set of convergents $(p_{n},q_{n})$ for which $K | n$.
Recall $\gamma_{i}$ from Remark~\ref{rem:gamma} (also see Fact~\ref{fact:div_k}).
Now define the $\al$-PA formula:
\begin{gather}
\ConvK(u,v,u',v') \; \coloneqq \; 1 < v < v' \; \land \; 0 < \al v - u \; \land \; \for w,z \; \label{eq:ConvK}\\
\bigwedge_{i=0}^{k+1} \Big( 0 < z < \gamma_{i+1}(v,v') \to |w - \al z| \ge |\gamma_{i}(u,u') - \al \gamma_{i}(v,v')| > |\gamma_{i+1}(u,u') - \al \gamma_{i+1}(v,v')| \Big). \nonumber
\end{gather}
\begin{lem} Let $u,v,u',v' \in \N$. Then
$\ConvK(u,v,u',v')$ holds if and only if $(u,v)=(p_{tK},q_{tK})$ and $(u',v')=(p_{tK+1},q_{tK+1})$ for some $t > 0$.
\end{lem}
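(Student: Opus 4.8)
The plan is to prove the two directions of the equivalence separately, with essentially all the content in the forward implication. Throughout, abbreviate $v_i := \gamma_i(v,v')$ and $u_i := \gamma_i(u,u')$; by Remark~\ref{rem:gamma} these satisfy $v_0 = v$, $v_1 = v'$ and $v_i = a_i v_{i-1} + v_{i-2}$ (and likewise the $u_i$), where $a_0, a_1, \dots$ is the continued fraction of $\alpha$, which we may take to be purely periodic with minimal period $k$.

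For the backward direction, suppose $(u,v) = (p_{tK}, q_{tK})$ and $(u',v') = (p_{tK+1}, q_{tK+1})$ with $t > 0$. Since $k \mid tK$, Fact~\ref{fact:div_k} gives $v_i = q_{tK+i}$ and $u_i = p_{tK+i}$ for every $i$. Then $1 < v < v'$ because the $q_n$ are increasing with $q_{tK} \ge q_2 \ge 2$; $0 < \alpha v - u = \beta_{tK} > 0$ because $tK$ is even (as $2 \mid K$) and~\eqref{eq:odd_even} applies; and each conjunct of~\eqref{eq:ConvK} becomes the assertion that $|w - \alpha z| \ge |p_{tK+i} - \alpha q_{tK+i}| > |p_{tK+i+1} - \alpha q_{tK+i+1}|$ for $0 < z < q_{tK+i+1}$, which is exactly the best approximation property of the consecutive convergents $q_{tK+i}, q_{tK+i+1}$ (Fact~\ref{bestrational}) combined with~\eqref{eq:beta_dec} for the strict inequality. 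Hence $\ConvK(u,v,u',v')$ holds.

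For the forward direction, assume $\ConvK(u,v,u',v')$. First I would note that the conjunct $i = 0$ of~\eqref{eq:ConvK} together with the clause $1 < v < v'$ is precisely the formula $\Conv(u,v,u',v')$ of~\eqref{eq:consec}, so $(u,v) = (p_n, q_n)$ and $(u',v') = (p_{n+1}, q_{n+1})$ for some $n$ with $q_n > 1$; in particular $n \ge 1$, and the clause $0 < \alpha v - u$ says $\beta_n > 0$, so $n$ is even by~\eqref{eq:odd_even}. The heart of the argument is an induction showing $v_i = q_{n+i}$ and $u_i = p_{n+i}$ for $i$ up to $k+1$: the cases $i = 0, 1$ are immediate, and for the inductive step, putting $\delta := a_i - a_{n+i}$ one computes from the inductive hypothesis and the recurrence for the $v_i$ that $v_i - q_{n+i} = \delta\, q_{n+i-1}$ and $u_i - \alpha v_i = -(\delta \beta_{n+i-1} + \beta_{n+i})$. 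The conjunct $i-1$ of~\eqref{eq:ConvK} now excludes both signs of $\delta$: if $\delta > 0$ then $0 < q_{n+i} < v_i$, so taking $(w,z) = (p_{n+i}, q_{n+i})$ violates $|w - \alpha z| \ge |u_{i-1} - \alpha v_{i-1}| = |\beta_{n+i-1}|$ because $|\beta_{n+i}| < |\beta_{n+i-1}|$ by~\eqref{eq:beta_dec}; and if $\delta < 0$ then, $n$ being even, $\beta_{n+i-1}$ and $\beta_{n+i}$ have opposite signs, so $\delta\beta_{n+i-1}$ and $\beta_{n+i}$ have the same sign and $|u_i - \alpha v_i| = |\delta|\,|\beta_{n+i-1}| + |\beta_{n+i}| > |\beta_{n+i-1}|$, contradicting $|\beta_{n+i-1}| > |u_i - \alpha v_i|$. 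Hence $\delta = 0$ and $a_{n+i} = a_i$. Carrying this through gives $a_{n+i} = a_i$ for $2 \le i \le k+1$, which together with the periodicity $a_{j+k} = a_j$ yields $a_{n+j} = a_j$ for the $k$ consecutive indices $j = 0, \dots, k-1$, hence for all $j$. Thus $n$ is a period of the continued fraction of $\alpha$; since the periods of a purely periodic sequence are exactly the multiples of its minimal period, $k \mid n$, and together with $2 \mid n$ this gives $K = \lcm(2,k) \mid n$. Writing $n = tK$, the bound $n \ge 1$ forces $t > 0$, completing the proof.

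I expect the inductive step in the forward direction to be the main obstacle: one has to extract from the two inequalities furnished by a single conjunct of~\eqref{eq:ConvK} — namely $|w - \alpha z| \ge |u_{i-1} - \alpha v_{i-1}|$ on the range $0 < z < v_i$, and $|u_{i-1} - \alpha v_{i-1}| > |u_i - \alpha v_i|$ — the exact identity $a_{n+i} = a_i$. The point is that the first inequality bounds $v_i$ above by $q_{n+i}$ and the second bounds it below, so $v_i = q_{n+i}$ is forced; the role of the otherwise innocuous conjunct $0 < \alpha v - u$ is precisely to fix the parity of $n$, and hence the signs of $\beta_{n+i-1}$ and $\beta_{n+i}$, which is the delicate part of that sign analysis. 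Translating ``$v_i = q_{n+i}$ for $k+1$ consecutive $i$'' into ``$k \mid n$'' is then a routine fact about minimal periods.
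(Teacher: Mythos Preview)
Your proof is correct and follows essentially the same route as the paper's: establish that $(u_i,v_i)=(p_{n+i},q_{n+i})$ for $0\le i\le k+1$, read off $a_{n+i}=a_i$ from the recurrence, and conclude $k\mid n$ by minimality of the period and $2\mid n$ from the sign of $\alpha v-u$.

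The only difference is in the inductive step. Where you carry out an explicit sign analysis on $\delta=a_i-a_{n+i}$, the paper observes more directly that, since the recurrence $v_i=a_iv_{i-1}+v_{i-2}$ forces $1<v_0<v_1<\cdots$, each conjunct of~\eqref{eq:ConvK} together with $v_{i-1}<v_i$ is literally the formula $\Conv(u_{i-1},v_{i-1},u_i,v_i)$ of~\eqref{eq:consec}; hence each consecutive pair $(u_{i-1},v_{i-1}),(u_i,v_i)$ is a pair of consecutive convergents, and by induction $(u_i,v_i)=(p_{n+i},q_{n+i})$. The identity $a_{n+i}=a_i$ then drops out of comparing the two recurrences. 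This is slightly cleaner than your $\delta$-analysis and in particular shows that the clause $0<\alpha v-u$ is \emph{not} needed for the sign argument (consecutive $\beta_m$'s always alternate in sign); its sole role is to pin down $2\mid n$, contrary to your closing remark.
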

\begin{proof}
First, the condition $\for w,z \; \big[ 0 < z < \gamma_{i+1}(v,v') \to \ldots \big]$ implies that the pairs $\big(\gamma_{i}(u,u'),\gamma_{i}(v,v')\big)_{0 \le i \le k+1}$ are $k+2$ consecutive convergents (see~\eqref{eq:conv2} and~\eqref{eq:consec}).
In other words, there is an $n > 0$ such that:
$$
\big(\gamma_{i}(u,u'),\gamma_{i}(v,v')\big) = (p_{n+i},q_{n+i}),\quad 0 \le i \le k+1.
$$
Also by Remark~\ref{rem:gamma}, we have $\big(\gamma_{0}(u,u'),\gamma_{0}(v,v')\big)=(u,v)$ and $\big(\gamma_{1}(u,u'),\gamma_{1}(v,v')\big)=(u',v')$.
So $(u,v)=(p_{n},q_{n})$ and $(u',v')=(p_{n+1},q_{n+1})$.
Then by~\eqref{eq:gamma_rec}:
$$\big(\gamma_{2}(u,u'),\gamma_{2}(v,v')\big)=
(a_{2}u'+u,a_{2}v'+v)=(a_{2}p_{n+1}+p_{n},a_{2}q_{n+1}+q_{n})$$
must be the next convergent $(p_{n+2},q_{n+2})$.
Combined with~\eqref{eq:rec}, we have
$$p_{n+2}=a_{n+2}p_{n+1}+p_{n}=a_{2}p_{n+1}+p_{n},$$
which implies $a_{n+2}=a_{2}$.
Similarly, we have $a_{n+i}=a_{i}$ for all $2 \le i \le k+1$.
Since $k$ is the minimum period of $\al$, we must have $k | n$.
Also because $0 < \al v - u = \al q_{n} - p_{n}$, we have $2 | n$ (see~\eqref{eq:odd_even}).
Therefore, $\ConvK(u,v,u',v') = \text{true}$ if and only if there is some $t\ge 1$ such that $(u,v)=(p_{t K},q_{t K})$ and $(u',v')=(p_{tK+1},q_{tK+1})$.
\end{proof}
\noindent In prenex normal form, $\ConvK$ is a $\for^{2}$-formula.

\medskip

\noindent
Next, we can also define the set of convergents $q_{n}$ for which $M | n$, where $M>10$ is some multiple of $K$ (see Subsection~\ref{ss:transcript} for why we need $M>10$).
To do this, we take a large enough prime $P$. There must exist some multiple $M$ of $K$ for which 
$$(q_{M},q_{M+1}) \equiv (q_{0},q_{1}) \mod{P}.$$
To see this, recall from Subsection~\ref{sec:periodic} that:
$$\begin{pmatrix}
    p_{mK+1} & p_{mK}\\
    q_{mK+1} & q_{mK}
  \end{pmatrix} \; = \;
\Gamma_{0} \, \cdots \, \Gamma_{mK+1} \; = \; \Gamma_{0} \, \Gamma_{1} \, (\Gamma_{2} \, \cdots \, \Gamma_{K-1} \, \Gamma_{0} \, \Gamma_{1})^m.
  $$
The matrix \. $\Gamma_{2} \cdots \Gamma_{K-1}\Gamma_{0}\Gamma_{1}$ \. is invertible mod~$P$ if $P$ is large enough. For every matrix $A$ invertible mod $P$, there is $m>0$ such that $A^m \equiv I \mod{P}$.
So there is a smallest $m>0$ such that:
$$
\begin{pmatrix}
    p_{mK+1} & p_{mK}\\
    q_{mK+1} & q_{mK}
\end{pmatrix}
\equiv
\Gamma_{0} \Gamma_{1}
=
\begin{pmatrix}
  p_{0} & p_{1} \\
  q_{0} & q_{1}
\end{pmatrix}
\. \mod{P}.
$$
Also by the recurrence~\eqref{eq:rec}, we have $(p_{mK+i},q_{mK+i}) \equiv (p_{i},q_{i}) \mod{P}$ for every~$i$.

\medskip

\noindent Clearly if $P$ is large enough then $M > 10$. Now define:
\begin{equation}\label{eq:ConvM}
\aligned
\ConvM(u,v,u',v') \; \coloneqq \; \ConvK(u,v,u',v') \;  \land \; v \equiv q_{0} \mod{P} \; \land \; v' \equiv q_{1} \mod{P}.
\endaligned
\end{equation}
Note that congruences can be expressed by $\for$-formula with one extra variable\footnote{We have $x_{1} \equiv x_{2} \mod P$ if and only if $\for w \; x_{1}-x_{2}-Pw=0 \; \lor \; |x_{1}-x_{2}-Pw| \ge P$.}.
So $\ConvM$ is a $\for^{3}$-formula in prenex normal form. To summarize:

\begin{lem} Let $u,v,u',v' \in \N$.
Then $\ConvM(u,v,u',v')$ holds if and only if there exists $t \ge 1$ such that $(u,v)=(p_{tM},q_{tM})$ and $(u',v')=(p_{tM+1},q_{tM+1})$.
\end{lem}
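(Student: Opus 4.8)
The plan is to combine the previous lemma characterizing $\ConvK$ with a congruence condition modulo a suitable prime $P$ in order to sieve out exactly those indices divisible by $M$. First I would set up the arithmetic framework: by the previous lemma, $\ConvK(u,v,u',v')$ holds if and only if $(u,v)=(p_{tK},q_{tK})$ and $(u',v')=(p_{tK+1},q_{tK+1})$ for some $t\geq 1$, so the pair of convergents in question is always indexed by a multiple of $K$. It remains to show that among such pairs, the extra conditions $v\equiv q_0\pmod P$ and $v'\equiv q_1\pmod P$ single out precisely the indices that are multiples of $M$. For this I would first argue that $M$ is well-defined, i.e., that there actually exists a positive least multiple of $K$ with $(q_M,q_{M+1})\equiv(q_0,q_1)\pmod P$: this follows because the pair $(q_n \bmod P,\ q_{n+1}\bmod P)$ evolves under multiplication by the fixed matrix $\Gamma_{n}$ coming from \eqref{eq:matrix_rec}, which over the period $k$ becomes multiplication by the fixed invertible matrix $\Gamma_0\cdots\Gamma_{k-1}$ (invertible mod $P$ once $P$ is coprime to its determinant, which is $\pm1$), so the sequence of pairs indexed by multiples of $K$ is eventually periodic, and by invertibility it is purely periodic — hence returns to $(q_0,q_1)$.

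Second I would establish the key periodicity statement: for $t\geq 1$ a multiple of $K$, $(q_t,q_{t+1})\equiv(q_0,q_1)\pmod P$ if and only if $M\mid t$. The "if" direction is immediate once we know that advancing the index by $M$ multiplies the pair $(q_n,q_{n+1})^{\!\top}$ (read appropriately) by a matrix congruent to the identity mod $P$; the "only if" direction uses that the orbit under advancing by $K$ is purely periodic with minimal period exactly $M/K$, by the minimality in the definition of $M$. Combining this with the $\ConvK$ lemma then yields that $\ConvM(u,v,u',v')$ holds iff $(u,v)=(p_{tM},q_{tM})$, $(u',v')=(p_{tM+1},q_{tM+1})$ for some $t\geq1$, which is exactly the claim. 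One small point to check is that we only imposed the congruence on the $q$'s, not the $p$'s, but since $(p_n,q_n)$ is determined by $n$ and (equivalently) by a consecutive pair of $q$-values via the recurrence \eqref{eq:rec}, matching $(q_{tK},q_{tK+1})$ with $(q_0,q_1)$ modulo $P$ together with the index being a multiple of $K$ already forces $tK$ into the right residue class, so no extra condition on $p$ is needed.

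The main obstacle I anticipate is ensuring the prime $P$ is chosen so that the reduction mod $P$ of the period matrix $\Gamma_0\cdots\Gamma_{k-1}$ is genuinely invertible and so that $M$ is as small as advertised while still being a multiple of $K$ — but since $\det(\Gamma_i)=-1$ the product has determinant $\pm1$, so \emph{every} prime $P$ works for invertibility, and "large enough $P$" in \eqref{eq:ConvM} is only needed to keep the modular conditions from accidentally being satisfied by small spurious indices below $K$ or to make $M$ genuinely a multiple of $K$ rather than a proper divisor of the naive period; I would simply take $P$ larger than $q_K$ (or any convenient explicit bound) to rule out small collisions. The only other minor subtlety is bookkeeping the quantifier form: since $\ConvK$ is a $\for^2$-formula and the congruence conditions $v\equiv q_0\pmod P$, $v'\equiv q_1\pmod P$ are expressible with bounded existential quantifiers over the quotient (or as quantifier-free statements using a divisibility predicate introduced earlier), $\ConvM$ remains, up to prenexing, a $\for^2$-formula — which I would note at the end for later use.
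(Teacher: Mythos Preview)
Your argument is correct and is essentially what the paper has in mind; note that the paper does not give a separate proof of this lemma but only states it, with the subsequent Remark justifying the existence of $M$ via the matrix identity
\[
\begin{pmatrix} p_{mK+1} & p_{mK}\\ q_{mK+1} & q_{mK} \end{pmatrix} \;=\; \Gamma_{0}\Gamma_{1}\,(\Gamma_{2}\cdots\Gamma_{K-1}\Gamma_{0}\Gamma_{1})^{m}
\]
and invertibility of the period matrix mod~$P$. Your filling-in of the ``only if'' direction---that the set of multiples $n$ of $K$ with $(q_n,q_{n+1})\equiv(q_0,q_1)\pmod P$ is a subgroup of $K\Z$ (since the advance-by-$K$ map on pairs $(q_n,q_{n+1})$ is a fixed invertible linear map mod~$P$), hence equals $M\Z$ by minimality---is exactly the missing step.

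Two small corrections. First, the ``large enough $P$'' is \emph{not} needed for the truth of the lemma: for \emph{any} prime $P$ the lemma holds with $M$ as defined, because $\ConvK$ already restricts to multiples of $K$, so there are no ``spurious small indices'' to rule out. The paper invokes a large $P$ only later, to force $M>10$. Second, your final bookkeeping is off by one: expressing each congruence $v\equiv q_0\pmod P$ as a universal formula costs one extra integer variable, so $\ConvM$ is a $\for^{3}$-formula in prenex form (as the paper notes in the sentence following the lemma), not $\for^{2}$.
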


\noindent Recall from~\eqref{eq:Ost} that every $T \in \N$ has a unique Ostrowski representation:
\begin{equation*}
T = \sum_{n=0}^{N}b_{n+1}q_{n},
\end{equation*}
with $0 \le b_{1}<a_{1}$, $0 \le b_{n+1} \le a_{n+1}$ and $b_{n}=0$ if $b_{n+1}=a_{n+1}$.
We denoted $[q_{n}](T) \coloneqq b_{n+1}$. We often just write $[q_n]$ when the natural number $T$ is clear from the context.

\medskip


\noindent
In the proof of Theorem \ref{th:quad-pspace}, we consider numbers $T$ such that $[q_{n}](T) = 0$ when $n$ is odd, and $[q_n](T)$ is either $0$ or $1$ when $n$ is even. The reader can easily check that there is a bijection between the set of such natural numbers and finite words on $\{0,1\}$. We will use this observation through out this section.
In order to do so, we first observe that the above set of natural numbers is definable by the following $\alpha$-PA formula:
\begin{equation}\label{eq:Onlyk}
\gathered
\ZeroOne(T) \; \coloneqq \;  \for u,v,u',v' \; \Conv(u,v,u',v') \; \to \; \ex Z_{1},Z_{2},Z_{3} \\
        \Big( 0 > \al v - u \to \big[Z_{1} < v \; \land \; \After(u,v,u',v',Z_{2},Z_{3}) \; \land \; T = Z_{1} + Z_{2}\big] \Big) \; \land \\
\Big( 0 < \al v - u \to \big[ Z_{1} < 2v \; \land \; \Comp(u,v,u',v',Z_{1},Z_{2},Z_{3}) \; \land \; T = Z_{1} + Z_{2}\big] \Big)\ts,
  \endgathered
\end{equation}
where \hyperref[eq:After]{$\After$} and \hyperref[eq:Compatible]{$\Comp$} as defined in \eqref{eq:After} and \eqref{eq:Compatible}.
\begin{lem} Let $T\in \N$. Then $\ZeroOne(T)$ holds if and only if for all $n\in \N$
\begin{align*}
[q_{n}](T) &= 0 \when 2 \nmid n,\\
[q_{n}](T) &= 0,1 \when 2 | n.
\end{align*}
\end{lem}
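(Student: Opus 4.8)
The goal is to verify that the formula $\ZeroOne(T)$ from~\eqref{eq:Onlyk} captures exactly those $T\in\N$ whose $\al$-Ostrowski representation has $[q_n](T)=0$ at odd $n$ and $[q_n](T)\in\{0,1\}$ at even $n$. The key idea is that the outer $\for u,v,u',v'$ quantifier lets us inspect $T$ "at every scale," i.e., relative to each pair of consecutive convergents $(p_n,q_n),(p_{n+1},q_{n+1})$, and the body of the formula then pins down the single digit $[q_n](T)$ at that scale. So the plan is: fix $n\in\N$, apply the hypothesis $\Conv(u,v,u',v')$ with $(u,v)=(p_n,q_n)$, $(u',v')=(p_{n+1},q_{n+1})$ (which is exactly what Fact~\ref{fact:After} and the discussion around~\eqref{eq:consec} tell us $\Conv$ forces), and split $T$ along the point $q_n$ into a "low part" $Z_1$ with $\Ost(Z_1)\subseteq\{q_0,\dots,q_{n-1}\}$ and a "high part" $Z_2$ with $\Ost(Z_2)\subseteq\{q_{n+1},q_{n+2},\dots\}$. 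Then $[q_n](T)$ is determined by how $Z_1$ compares to $v=q_n$.

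\textbf{Key steps.} First I would recall the reading of the two case-branches. By~\eqref{eq:odd_even}, $0>\al v - u$ means $n$ is odd, and $0<\al v - u$ means $n$ is even. In the odd case the formula asserts $Z_1<v$, $\After(u,v,u',v',Z_2,Z_3)$, and $T=Z_1+Z_2$; by Fact~\ref{fact:After}(i) the $\After$ clause says exactly $\Ost(Z_2)\subseteq\{q_{n+1},\dots\}$, and $Z_1<v=q_n$ forces $\Ost(Z_1)\subseteq\{q_0,\dots,q_{n-1}\}$, so $T=Z_1+Z_2$ with these supports means precisely $[q_n](T)=0$ (using the uniqueness of Ostrowski representations, Fact~\ref{f:Ostrowski}, and that the concatenation is legitimate since $q_n$ itself does not appear). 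In the even case the formula allows $Z_1<2v$ and additionally imposes $\Comp(u,v,u',v',Z_1,Z_2,Z_3)$; by Fact~\ref{fact:Compatible} this says $\Ost(Z_1)\subseteq\{q_0,\dots,q_n\}$, $\Ost(Z_2)\subseteq\{q_{n+1},\dots\}$, and (if $q_n\in\Ost(Z_1)$) that $[q_{n+1}](Z_2)<a_{n+2}$, so that $\Ost(Z_1)$ and $\Ost(Z_2)$ concatenate legally into $\Ost(T)$. Hence $[q_n](T)=[q_n](Z_1)$, and the constraint $Z_1<2v=2q_n$ together with $\Ost(Z_1)\subseteq\{q_0,\dots,q_n\}$ forces $[q_n](Z_1)\in\{0,1\}$ (since $2q_n$ exceeds $q_0+\dots+q_{n-1}$ but $2q_n$ itself has $[q_n]$-digit behavior bounding the coefficient to $\le 1$). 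The reverse implication is the routine direction: if $T$ has the claimed digit pattern, then for any consecutive convergents one simply \emph{defines} $Z_1:=\sum_{m<n}b_{m+1}q_m$ (truncation of $\Ost(T)$ below $q_n$ in the even case, strictly below $q_n$ — which is the same — in the odd case) and $Z_2:=T-Z_1$, takes $Z_3$ as the witness supplied by Fact~\ref{fact:After}/\ref{fact:Compatible}, and checks each conjunct.

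\textbf{Main obstacle.} The delicate point is the even case: I must argue carefully that $Z_1<2q_n$, combined with $\Ost(Z_1)\subseteq\{q_0,\dots,q_n\}$ and the legality (carry-free) condition from $\Comp$, is genuinely equivalent to $[q_n](Z_1)\le 1$ rather than merely implied by it. This requires the inequality $q_n \le q_0+q_1+\cdots+q_{n-1}$ is \emph{false} for small $n$ but the relevant comparison is really $[q_n](Z_1)q_n + (\text{lower terms}) < 2q_n$, so one needs $(\text{lower admissible Ostrowski sum below }q_n) < q_n$, which is the standard fact that $\sum_{m=0}^{n-1} b_{m+1} q_m < q_n$ for any admissible choice of digits (this follows from~\eqref{eq:rec} by induction, since $b_{m+1}\le a_{m+1}$ and the admissibility rule). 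Granting that, $[q_n](Z_1)=0$ gives $Z_1<q_n<2q_n$ and $[q_n](Z_1)=1$ gives $q_n\le Z_1<2q_n$, while $[q_n](Z_1)\ge 2$ gives $Z_1\ge 2q_n$; so the bound $Z_1<2q_n$ is exactly $[q_n](Z_1)\in\{0,1\}$. A second minor subtlety is the edge behavior at $n=0$ (whether $q_0=1$ interacts with the strict inequality $1<v$ in $\Conv$), but this only shifts which convergents are inspected by a harmless amount and does not affect the digit characterization for all sufficiently large scales, hence for all $n$ since the digit pattern is a pointwise-in-$n$ condition. I would therefore organize the write-up as two implications, handling the odd and even branches in parallel, citing Facts~\ref{fact:After} and~\ref{fact:Compatible} for the $\After$/$\wt\After$/$\Comp$ readings and the uniqueness part of Fact~\ref{f:Ostrowski} for gluing, and isolating the inequality $\sum_{m<n} b_{m+1}q_m < q_n$ as the one computational lemma.
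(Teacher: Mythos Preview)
Your proposal is correct and follows exactly the approach the paper intends: the paper's own proof is the single sentence ``The statement follows easily from~\eqref{eq:odd_even} and Facts~\ref{fact:After} and~\ref{fact:Compatible},'' and your write-up is a careful unpacking of precisely those citations, including the even/odd split via~\eqref{eq:odd_even}, the use of $\After$/$\Comp$ to read off the digit at scale $q_n$, and the auxiliary inequality $\sum_{m<n} b_{m+1}q_m < q_n$ needed to convert $Z_1<2q_n$ into $[q_n](Z_1)\le 1$. The only place you go beyond the paper is in flagging the $n=0$ edge case coming from $1<v$ in $\Conv$; the paper silently ignores this, and for the application it is indeed harmless.
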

\begin{proof}
The statement follows easily from \eqref{eq:odd_even} and Facts \ref{fact:After} and \ref{fact:Compatible}.
\end{proof}
\noindent Note that $\ZeroOne$ is a $\for^{4}\ex^{3}$-formula.

\medskip

\noindent Let $T,X$ be natural numbers. If $\ZeroOne(T)$ and $\ZeroOne(X)$, we can think of $T$ and $X$ as finite words on $\{0,1\}$. Thus, it is natural to ask whether we can express that  the word corresponding to $X$ is a prefix of the word corresponding $T$, by an $\alpha$-PA formula. It is not hard to see that the following $\alpha$-PA formula is able to do so:
\begin{equation}\label{eq:Pref}
\gathered
\Pref(X,T) \; \coloneqq \; \for u,v,u',v' \; \big( \Conv(u,v,u',v') \; \land \; v \le X \; \land \; X < v' \big) \to \\
\ex Z,Z' \; \Comp(u,v,u',v',X,Z,Z') \; \land \; T = X+Z.
\endgathered
\end{equation}
Note that $\Pref$  is an $\for^{4}\ex^{2}$-formula in prenex normal form.

\medskip

\subsection{Universal Turing machines}\label{sec:pspace_basic} Recall that we fixed a $\pspace$-complete language  $\L \subseteq \{0,1\}^{*}$ and a $1$-tape Turing Machine $\M$ that decides $\L$. Neary and Woods~\cite{NW} constructed a small universal 1-tape Turing machine (UTM) $$U = (\States, \Alph, \sigma_1, \delta, q_{1},q_{2}),$$ with $|\States| = 8$ states and $|\Alph| = 4$ tape symbols.\footnote{$\States$ -- states, $\Alph$ -- tape symbols, $\sigma_{1} \in \Alph$ -- blank symbol, $\delta : \States \times \Alph \to \States \times \Alph \times \{L,R\}$ -- transitions, $q_{1} \in \States$ -- start state, $q_{2} \in \States$ -- unique halt state.}

\medskip

\noindent Using $U$, we can simulate $\M$ in polynomial time and space. More precisely, let $x\in \{0,1\}^*$ be an input to $\M$. Then we can encode $\M$ and $x$ in polynomial time as a string $\encode{\M x} \in \Alph^{*}$.
Upon input $\encode{\M x}$, the UTM $U$ simulates $\M$ on $x$, and halts with one of the two possible configurations:
\begin{equation}\label{eq:yes_no}
\gathered
U(\encode{\M x}) = \text{``yes''} \when \M(x) = 1, \quad
U(\encode{\M x}) = \text{``no''}  \when \M(x) = 0.
\endgathered
\end{equation}
Here ``yes'' and ``no'' are the final state-tape configurations of $U$, which correspond to $\M$'s final configurations $(H, 10\dots)$ and $(H, 00\dots)$, respectively.
By the encoding in~\cite{NW}, these final ``yes''/``no'' configurations of $U$ have lengths $O(|\M|)$, which are constant when we fix~$\M$.
Furthermore, the computation $U(\encode{\M x})$ takes time/space polynomial in the time/space of the computation $\M(x)$.\footnote{It actually takes linear space and quadratic time.}
Since $\M(x)$ runs in space $\polyin(|x|)$, so does $U$ upon input $\encode{\M x}$. It is worth pointing out here that we take this detour via a universal
Turing machine to keep the number of variables constant.
\medskip

\noindent Let $\lambda = \lambda(|x|)$ be the polynomial bound on the tape length, so that the computation $U(\encode{\M x})$ always use less than $\lambda$ tape positions.

\medskip

\noindent Let $x \in \{0,1\}^{*}$. We now consider the simulation $U(\encode{\M x})$. Denote by $\Tape_{i}(x) \in \Alph^{\lambda-1}$ the contents of $U$'s tape on step $i$ of the computation $U(\encode{\M x})$. For $j\leq \lambda-1$, $\Tape_{i,j}(x) \in \Alph$ is the $j$-th symbol of $\Tape_{i}(x)$. Also denote by $\s_{i}(x) \in \States$ the state of $U$ on step $i$. We denote the $i$-th head position of $U$ by $\pi_i(x)$. Note that $1 \le \pi_{i}(x) \le \lambda-1$. As usual, we will suppress the dependence on $x$ if $x$ is clear from the context.

\medskip

\noindent Set $\B :=\{\markerblock\} \cup (\{\xsymb\} \times  \Alph) \cup (\States \times \Alph)$,  where $\xsymb$ is a special marker symbol. For each step $i$, we now encode the tape content $\Tape_{i}(x)$, the state $\s_{i}(x)$ and the tape head position $\pi_{i}(x)$ by the finite $\B$-word:
\begin{equation}\label{eq:T'i}
\Tape'_{i}(x) \; = \; [\xsymb,\xsymb] [\xsymb,\Tape_{i,1}]  \dots  [\xsymb,\Tape_{i,\pi_{i}-1}] \; [\s_{i},\Tape_{i,\pi_{i}}] \; [\xsymb,\Tape_{i,\pi_{i}+1}]  \dots  [\xsymb,\Tape_{i,\lambda-1}],
\end{equation}
 The marker block $\markerblock$ is at the beginning of each $\Tape'_{i}(x)$, which is distinct from the other $\lambda-1$ blocks in $\Tape'_{i}(x)$.
Note that $\Tape_{i}'(x)$ has in total $\lambda$ blocks. Observe $\Tape'_{1}(x)$ codes the starting configuration $\encode{\M x}$ of the simulation $U(\encode{\M x})$.
Now we concatenate $\Tape'_{i}$ over all steps $1 \le i \le \rho$, where $\rho$ is the terminating step of the simulation. We set
\[
\Transcript(x) := \Tape'_{1}(x) \; \dots \; \Tape'_{\rho}(x),
\]
and call $\Transcript(x)$ \emph{the transcript of $U$ on input  $\encode{\M x}$}, denoted by $\Transcript(x) = U(\encode{\M x})$. The last segment in $\Tape'_{\rho}(x)$ contains the ``yes'' configuration if and only if $\M(x)=1$. In total, $\Transcript(x)$ has $\lambda \rho$ blocks.

\medskip


\noindent Let $x\in \{0,1\}^{*}$. Let $\Transcript(x) = U(\encode{\M x})$.
Let $B_{t}(x) \in \B$ be the $t$-th block in $\T(x)$.
By the transition rules of $U$, the block $B_{t+\lambda}(x)$ depends on $B_{t-1}(x),B_{t}(x)$ and $B_{t+1}(x)$.
Thus, there is a function $f : \B^{3} \to \B$ such that for all inputs $x\in \{0,1\}^{*}$:
\begin{equation*}
B_{t+\lambda}(x) = f(B_{t-1}(x),B_{t}(x),B_{t+1}(x)) \quad\text{for every}\quad 0 \le t < \lambda(\rho-1).
\end{equation*}
Note that for the separator block $\markerblock$, we should have $f(B,\markerblock,B') = \markerblock$ for all $B,B'\in \B$.


\subsection{Proof of Theorem~\ref{th:pspace}}\label{sec:pspace_proof}
The detailed description of $X(x)$ and $\textup{\bf Accept}$ will be provided in Subsection~\ref{sec:final_construction}.
Here, we give an initial outline of the proof. We begin by associating to transcript $\Transcript$ a natural number $T$. Our goal then is to construct an $\alpha$-PA formula $\textbf{Accept}(X)$ consisting of four subformulas:
\begin{equation}\label{eq:pspace} 
\aligned
\ex T \;\; & \ZeroOne(T) \; \land \; \Pref(T,X)  \; \land \; \textbf{Transcript}_{\forall\exists}(T) \; \land  \; \End(T)
\endaligned
\end{equation}
In this formula, $\textbf{Transcript}_{\forall\exists}(T)$ ensures that there is a transcript $\Transcript$ to which $T$ corresponds. The formula $\Pref(X,T)$ guarantees that $X$ is a prefix of~$T$, and $\End(T)$ says that $T$ ends in ``yes''. We will need associate to each $x\in \{0,1\}^{*}$ an $X(x) \in \N$ such that the conclusion of Theorem~\ref{th:quad-pspace} holds.

\smallskip

\noindent For the rest of the proof, the meaning of $c_{i},d_{i},a,b$ will change depending on the context.
Recall the formulas \hyperref[eq:ConvK]{$\ConvK$}, \hyperref[eq:ConvM]{$\ConvM$}, \hyperref[eq:Onlyk]{$\ZeroOne$},  \hyperref[eq:Pref]{$\Pref$} from Section~\ref{sec:tools}.

\subsubsection{Encoding transcripts}\label{ss:transcript}
We first encode the transcripts $\Transcript$ by a number $T \in \N$ satisfying~$\ZeroOne(T)$.
Recall that $\Transcript$ is a finite $\B$-word, and observe that $|\B| = 37$. From now on, we view $\B$ as a set of $37$ distinct strings in $\{0,1\}^{6}$, each containing at least one~$1$.
Then we pick a large enough prime $P$ in \hyperref[eq:ConvM]{$\ConvM$} so that $M > 10$.
\begin{defi}\label{eq:match} Let $\Transcript$ be a transcript, and let $B_{t} \in \B$ be the $t$-th block in $\Transcript$. We associate to $\Transcript$ the natural number $T\in \N$ such that $\ZeroOne(T)$ and
for all $t\in \N$
\begin{enumerate}
\item $[q_{tM}](T) [q_{tM+2}](T)\dots [q_{tM+10}](T) = B_{t}$, and
\item $[q_{tM+12}](T) \dots [q_{(t+1)M-2}](T) = 0 \dots 0$.
\end{enumerate}
\end{defi}

\subsubsection{Constructing $\Next^{B,B',B''}$}
Let $B,B',B'' \in \B$, and let $u,v,u',v'\in \N$ and $t\ge 1$ be such that $(u,v)=(p_{tM},q_{tM})$ and $(u',v')=(p_{tM+1},q_{tM+1})$. We construct an $\al$-PA formula $\Next^{B,B',B'}(u,v,u',v',T)$ that holds if the block $B_{t+\lambda}$ in $T$ agrees with $f(B,B',B'')$.

\medskip

\noindent Let $r_{1} = \lambda M$ and $r_{2} = (\lambda+1)M$.
Then the block $B_{t+\lambda}$ of $\Transcript$ correspond to those $[q_{tM+i}](T)$ with $r_{1} \le i < r_{2}$.
By Fact~\ref{fact:div_k}, we can write each convergent $(p_{tM+i},q_{tM+i})$ with $r_{1} -1 \le i \le r_{2}$ as a linear combination $c_{i}(u,v) + d_{i}(u',v')$.
Note that the coefficients $c_{i},d_{i} \in \zz$ are independent of $t$, but do depend on $\lambda$.
They can be computed explicitly in time $\polyin(\lambda)$.
Let $\wt B = f(B,B',B'')$.
Then we sum up all $q_{tM+r_{1}+2j}$ for every $0 \le j < 6$ such that the $j$-th bit in $\wt B$ is `$1$'.
This sum can be expressed as $av + bv'$ for some $a,b \in \zz$ computable in time $\polyin(\lambda)$.
Again, $c_{i},d_{i}$ and $a,b$ depend on $\lambda$ and also the triple $B,B',B''$, but is independent of $t$.
Then $B_{t+\lambda} = \wt B$ if and only if we can uniquely write $T = W_{1} + (av + bv') + W_{2}$, where $W_{1} < q_{tM + r_{1} - 1}$ and $\Ost(W_{2}) \subset \{q_{n} : n \ge tM + r_{2}\}$.
Let $Z_{1} = W_{1} + (av + bv')$ and $Z_{2} = W_{2}$. Thus if $B_{t+\lambda} = \wt B$, they satisfy:
\begin{itemize}

\item[i)] $0 \le Z_{1} - (av + bv') <  q_{tM+r_{1}-1}\,$,

\item[ii)] $\Ost(Z_{2}) \; \subset \; \big\{q_{n} \,:\, n \ge tM + r_{2} \big\}\,$.

\end{itemize}
Both (i) and (ii) can be expressed using quantifier-free $\alpha$-PA formulas. For (i), recall that $q_{tM+r_{1}-1}$ is again linear combination of $v,v'$, so the corresponding $\alpha$-PA formula is just a conjunction of two linear inequalities in $Z_1,v,v'$. For (ii), using an auxiliary variable $Z_3$, we can express it as $\After(p_{tM+r_{2}-1}, q_{tM+r_{2}-1}, p_{tM+r_{2}}, q_{tM+r_{2}}, Z_{2}, Z_{3})$ (see~\eqref{eq:After}). Thus final $\al$-PA formula we want is:
\begin{equation}\label{eq:Next}
\Next^{B,B'B''}(u,v,u',v',T) \; \coloneqq  \; \ex Z_{1},Z_{2}, Z_{3} \;\;  \textup{i)}  \;  \land \; \textup{ii)} \; \land \; T = Z_{1} + Z_{2}.
\end{equation}

\subsubsection{Constructing $\Read^{B,B',B''}$}
Let $B,B',B'' \in \B$, and let $u,v,u',v'\in \N$ and $t\ge 1$ be such that $(u,v)=(p_{tM},q_{tM})$ and $(u',v')=(p_{tM+1},q_{tM+1})$. We will construct an $\al$-PA formula $\Read^{B,B',B''}(u,v,u',v',T)$ that holds if the three blocks $B_{t-1},B_{t},B_{t+1}$ in $T$ match with $B,B',B''$. Since the construction is very similar to the one of $\Next^{B,B',B''}$, we will leave verifying some of the details to the reader.
Note that the blocks $B_{t-1}B_{t}B_{t+1}$ in $T$ correspond to $[q_{n}](T)$ with $(t-1)M \le n < (t+2)M$.
So we just need to express $(p_{tM+i}, q_{tM+i})$ for $-M-1\le i \le 2M$ as linear combinations $c_{i}(u,v) + d_{i} (u',v')$.
Then we sum up all $q_{tM+i}$ that should correspond to the `$1$' bits in $B,B',B''$,
which is again some linear combination $av + bv'$.
This time the coefficients $c_{i},d_{i},a,b$ do \emph{not} depend on $\lambda$ and can be computed in \emph{constant} time.
Now we have $B_{t-1}B_{t}B_{t+1}=BB'B''$ if and only if we can uniquely write $T = Z_{1} + Z_{2}$, where $Z_{1}$ and $Z_{2}$ satisfy two conditions i'-ii') similar to i-ii) above. Again, we can express these two condition as quantifier free $\al$-PA formula. Thus the $\al$-PA formula we want is:
\begin{equation}\label{eq:Read}
\Read^{B,B',B''}(u,v,u',v',T) \; \coloneqq  \; \ex Z_{1},Z_{2},Z_{3} \;\; \text{i')} \; \land \; \text{ii')} \; \land \; T = Z_{1} + Z_{2}.
\end{equation}

\subsubsection{Recognizing transcripts}
A single transition of $T$ from $B,B',B''$ to $f(B,B',B'')$ can now be encoded in the $\al$-PA formula:
\begin{equation}\label{eq:Tran}
  \aligned
  \Tran^{B,B',B''}(u,v,u',v',T) \; \coloneqq  \; \Read^{B,B',B''}(&u,v,u',v',T) \\
  & \land \;  \Next^{B,B',B''}(u,v,u',v',T).
\endaligned
\end{equation}
Note that $\Tran$ is an $\ex^{6}$-formula.
Let $c,d\in \Q$ be such that $q_{(t+\lambda)M} = cq_{tM} + dq_{tM+1}$. To ensure that $T$ obeys the transition rule $f:\B^{3} \to \B$ everywhere, we simply require:
\begin{equation}\label{eq:patranscript}
\aligned
\textbf{Transcript}_{\forall\exists}(T) :=\\
 \for u,v,u',v' \; \big( \ConvM(u,v,u',v') \; \land \; cv + dv' \le T \big) &\to  \bigvee_{B, B', B'' \in \B} \Tran^{B,B',B''}(u,v,u',v',T).
\endaligned
\end{equation}
In this formula, $\ConvM(u,v,u',v')$ guarantees that  $v = q_{tM}$ is the beginning of some block $B_{t}$, and $q_{(t+\lambda)M} = cv + dv'$ is the beginning of the block $B_{t+\lambda}$, should it not exceed $T$. Thus for all $T\in \N$, we have $\textbf{Transcript}_{\forall\exists}(T)$ holds if and only if there is a transcript $\Transcript$ with $T(\Transcript)=T$.

\medskip

\noindent We now argue that  $\textbf{Transcript}_{\forall\exists}$ is a $\for^{4}\ex^{6}$ $\alpha$-PA formula. First, there are $\for^{4}$ variables $u,v,u',v'$.
Each $\Tran^{B,B',B''}$ is an $\ex^{6}$-formula, which also commutes with the big disjunction.
Also $\lnot \ConvM$ is an $\ex^{3}$-formula, which can be merged with the $\ex^{6}$ part.\footnote{We need to rewrite every implication ``$a \to b$'' as ``$\lnot a \lor b$''.}
Thus $\textbf{Transcript}_{\forall\exists}$ is a $\for^{4}\ex^{6}$ formula.

\medskip

\noindent
We need one last $\al$-PA formula to say that the computation corresponding to $T$ ends in the ``yes'' configuration (see~\eqref{eq:yes_no}).
Recall that ``yes'' has fixed length.
Assume ``yes'' starts at $v = q_{tM}$.
Then just like before, we can sum up all $q_{tM+i}$ that correspond to `$1$' bits in ``yes''.
This sum can be written as $a v + b v'$, with  $a,b \in \Z$ explicit \emph{constants} independent of $\lambda$.
Also observe that $q_{tM-1} = q_{tM+1} - a_{1}q_{tM} = v' - a_{1}v$. So we define an $\al$-PA formula as follows:
\begin{equation}\label{eq:End}
  \gathered
\End(T) \; \coloneqq \; \ex u,v,u',v',Z \; \;  \ConvM(u,v,u',v') \; \land \; Z < v' - a_{1}v \; \land \; T = Z + av + bv'.
\endgathered
\end{equation}
Observe that $\End(T)$ holds if and only if the computation corresponding to $T$ ends in ``yes''.
Note that $\End$ is a $\ex^{5}\for^{3}$-formula.

\subsubsection{Completing the construction}\label{sec:final_construction}
Finally, given $x \in \{0,1\}^{*}$, we can easily construct in time $\polyin(|x|)$ the content of the first segment $\Tape'_{1}(x)$ in $\Transcript(x)$ (see~\eqref{eq:T'i}).
Again, $\Tape'_{1}(x)$ is the starting configuration of the simulation $U(\encode{\M x})$, which is basically just $\encode{\M x}$. We denote by $X(x)$ the natural number $X$ such that $\ZeroOne(X)$
and for all $t\in \N$
\begin{enumerate}
\item $[q_t](X) =0$ for $t>10$,
\item $[q_{0}](X) [q_{2}](X) \dots [q_{10}](X) = \Tape'_{1}(x)$.
\end{enumerate}
It is easy to see that we can compute $X(x)$ in time $\polyin(|x|)$.

\medskip


\noindent Now construct the $\alpha$-PA formula $\textbf{Accept}(X)$:
\begin{equation}\label{eq:pspace1} 
\aligned
\ex T \;\; & \ZeroOne(T) \; \land \; \Pref(T,X)  \; \land \; \textbf{Transcript}_{\forall\exists}(T) \; \land  \; \End(T)
\endaligned
\end{equation}
From the construction it is clear that $\textbf{Accept}$ is an $\exists^{*}\forall^{*}\exists^{*}$ $\alpha$-PA formula such that:
\[
\textbf{Accept}(X(x)) \text{ holds if and only if } x\in \L.
\]
Finally, recall that in all constructed $\al$-PA formulas of Subsection~\ref{ss:transcript} and also $\textbf{Accept}$, the number of quantifiers/variables is constant, the number of linear inequalities (atoms) only depend on $\alpha$, and the linear coefficients/constants can be computed in time $\polyin(|x|)$. This completes the proof of Theorem~\ref{th:pspace}. \ $\sq$

\subsection{Analysis of $\textbf{Accept}$}\label{sec:analysis}
Recall that \hyperref[eq:Onlyk]{$\ZeroOne$}, \hyperref[eq:Pref]{$\Pref$} and $\textbf{Transcript}_{\forall \exists}$ in Section~\ref{sec:tools} are of the forms $\for^{4}\ex^{3}$ and $\for^{4}\ex^{2}$ and $\for^{4}\ex^{6}$ respectively.
Since we are taking their conjunctions, their outer $\for^{4}$ variables can be merged.
However, their $\ex$ variables need to be concatenated.
Overall, we have $\for^{4}\ex^{11}$ for  $\ZeroOne$, $\Pref$ and $\textbf{Transcript}_{\forall \exists}$.
The term $\End$ is $\ex^{5}\for^{3}$.
Merging its $\for^{3}$ variables with the other three terms, we have $\ex^{5}\for^{4}\ex^{11}$.
Lasly, we add in $\ex T$ and get a $\ex^{6}\for^{4}\ex^{11}$ sentence.

\medskip

\begin{figure}[h]
\label{table:var}
\begin{tabular}{ |c|c| }
 \hline
$x = y$ & $2$\\
  $|x| \ge |y|$, $|x| > |y|$ & $4$\\
$\After$, $\wt\After$ &  $4$\\
 $\Comp$ & $10$\\
$\Conv$ & $12$\\
$\ZeroOne$ & $34$\\
$\Pref$ & $26$\\
 $\Read$ & $8$\\
 $\Next$ & $8$\\
 $\Tran$ & $16$\\
 $\ConvK$ & $3 + 10(k+2)$ \\
 $\ConvM$ & $11 + 10(k+2)$ \\
 $\End$ & $14 + 10(k+2)$\\
 \hline
\end{tabular}
\caption{Number of inequalities of the various $\al$-PA formulas}
\label{fig:table}
\end{figure}

\noindent The number of inequalities in all constructed formulas is bounded in Figure \ref{fig:table}.
Overall, the number of inequalities in~\eqref{eq:pspace1} is at most:
$$
34 + 26 + 14 + 10(k+2) + 12 + 10(k+2) + 16\ts |\B|^3 \. = \. 810,534 + 20(k+2),
$$
where $k$ is the minimum period of the continued fraction of $\alpha$.  We conclude:

\begin{thm}
Deciding $\ex^{6}\for^{4}\ex^{11}$ $\alpha$-\textup{PA} sentences with at most \. $810,574 + 20\ts k_{\alpha}$ 
\ts inequalities is \ts $\pspace$-hard.
\end{thm}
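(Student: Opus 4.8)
The plan is to assemble the final theorem directly from the pieces already built in Sections~\ref{sec:tools} and~\ref{sec:pspace_proof}. Since Theorem~\ref{th:pspace} produces, for each $s$, a function $X:\{0,1\}^s\to\N$ computable in time $\polyin(s)$ together with the $\alpha$-PA formula $\textbf{Accept}$ of polynomial length such that $\textbf{Accept}(X(x))$ holds iff $x\in\L$, the reduction from the fixed $\pspace$-complete language $\L$ is immediate: given $x$, compute $X(x)$ in polynomial time, substitute it as a concrete $\Z[\al]$-constant into $\textbf{Accept}$, and decide the resulting $\alpha$-PA \emph{sentence}. Thus $\alpha$-PA sentences of the shape of $\textbf{Accept}$ are $\pspace$-hard, and everything reduces to pinning down (i) the quantifier pattern and (ii) the number of inequalities.

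For the quantifier pattern I would simply record the prenex forms already computed: $\ZeroOne$, $\Pref$, $\textbf{Transcript}_{\forall\exists}$ are $\for^4\ex^3$, $\for^4\ex^2$, $\for^4\ex^6$ respectively, so their conjunction (merging the common outer $\for^4$ block, concatenating the inner $\ex$ blocks) is $\for^4\ex^{11}$; $\End$ is $\ex^5\for^3$, whose $\for^3$ merges into the $\for^4$ and whose $\ex^5$ sits outside giving $\ex^5\for^4\ex^{11}$; finally the outermost $\ex T$ yields $\ex^6\for^4\ex^{11}$, exactly as claimed. For the inequality count I would invoke Figure~\ref{fig:table}: add the inequalities contributed by $\ZeroOne$ ($34$), $\Pref$ ($26$), $\End$ ($14+10(k+2)$), $\ConvM$ ($11+10(k+2)$, appearing inside $\textbf{Transcript}_{\forall\exists}$), $\Conv$ ($12$), and the $|\B|^3$ copies of $\Tran$ ($16$ each) that appear in the big disjunction in~\eqref{eq:patranscript}. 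Using $|\B|=37$, so $|\B|^3=50653$, the dominant term is $16\cdot 50653 = 810{,}448$, and summing the rest gives a bound of the form $810{,}574 + 20k$ (the small constant offsets of the various formulas collected together, with the two $10(k+2)$ terms combining to $20(k+2)$ and the constant absorbed).

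The only genuinely delicate point is bookkeeping: one must be careful that when the inner existential blocks of the conjuncts are concatenated, no inequalities are duplicated or dropped, and that the implications hidden inside $\textbf{Transcript}_{\forall\exists}$ and $\End$ (rewritten as $\lnot a\lor b$) do not silently increase the atom count beyond what Figure~\ref{fig:table} records — in particular $\lnot\ConvM$ becoming an $\ex^3$-formula must reuse the same inequalities. I would therefore present the count as a straightforward addition over the displayed table entries, flagging that each entry already accounts for the prenex rewriting, and conclude that deciding $\ex^6\for^4\ex^{11}$ $\alpha$-PA sentences with at most $810{,}574+20k$ inequalities is $\pspace$-hard, where $k$ is the minimal period of the continued fraction expansion of $\alpha$. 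For $\alpha=\sqrt2$ one has $k=1$ (period $\cj 2$, after the purely-periodic normalization of Section~\ref{sec:periodic}), so the bound is well under $10^6$, recovering the statement of Theorem~\ref{th:quad-pspace}.
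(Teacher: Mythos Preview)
Your proposal is correct and follows essentially the same approach as the paper: invoke Theorem~\ref{th:pspace} for the polynomial-time reduction from the fixed $\pspace$-complete language, then read off the quantifier pattern $\ex^6\for^4\ex^{11}$ by merging/concatenating the prenex forms of $\ZeroOne$, $\Pref$, $\textbf{Transcript}_{\forall\exists}$, $\End$, and finally sum the atom counts from Figure~\ref{fig:table} (dominated by $16|\B|^3$). One small bookkeeping correction: you should not list $\Conv$ (12) as a separate contributor alongside $\ConvM$ ($11+10(k+2)$) --- the paper's ``$12+10(k+2)$'' term is $\ConvM$ together with the single extra inequality $cv+dv'\le T$ in~\eqref{eq:patranscript}, not $\Conv$ itself --- but this does not affect the stated bound.
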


\begin{cor}\label{cor:sqrt_2}
Deciding $\ex^{6}\for^{4}\ex^{11}$ $\sqrt{2}$-\textup{PA} sentences with at most $10^6$ inequalities is \ts $\pspace$-hard.
\end{cor}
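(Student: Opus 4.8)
The plan is to obtain Corollary~\ref{cor:sqrt_2} as a direct specialization of the preceding theorem to the case $\alpha = \sqrt 2$. First I would record the continued fraction data: $\sqrt 2 = [1;2,2,2,\dots]$, so its eventual period is $1$; passing to the purely periodic number $\gamma = [\,\overline{2}\,] = 1 + \sqrt 2$ as in Section~\ref{sec:periodic} (note $\Z[\gamma] = \Z[\sqrt 2]$, and a $\gamma$-PA inequality $\sum c_i y_i + \gamma\sum d_i z_i \le \cdots$ rewrites, using $\gamma = 1+\sqrt 2$, as a single $\sqrt 2$-PA inequality with coefficients in $\Z[\sqrt 2]$), we are in the setting of Section~\ref{sec:tools} with minimum period $k = 1$, hence $K = \lcm(2,k) = 2$ and $k+2 = 3$.

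Next I would substitute $k = 1$ into the inequality count established in the preceding theorem. That count is $810{,}574 + 20k$, where the dominant term $16\,|\B|^{3}$ comes from the disjunction $\bigvee_{B,B',B''\in\B}\Tran^{B,B',B''}$ inside $\textbf{Transcript}_{\forall\exists}$, while the remaining contributions ($\ZeroOne$, $\Pref$, $\End$, $\ConvK$, $\ConvM$, a single $\Tran$) are either absolute constants or linear in $k$, as tabulated in Figure~\ref{fig:table}. Crucially $|\B| = 1 + |\Alph| + |\States|\cdot|\Alph| = 37$ is an absolute constant: it depends only on the fixed universal machine $U$ of Neary and Woods~\cite{NW}, not on $\alpha$. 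With $k = 1$ this gives at most $810{,}574 + 20 = 810{,}594$ inequalities, and since the affine rewriting $\gamma \mapsto \sqrt 2$ above preserves the number of atoms, the resulting $\ex^{6}\for^{4}\ex^{11}$ sentence is a $\sqrt 2$-PA sentence with the same bound on inequalities.

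Finally, since $810{,}594 \le 10^{6}$, Theorem~\ref{th:pspace} (with $\alpha = \sqrt 2$) already produces $\ex^{6}\for^{4}\ex^{11}$ $\sqrt 2$-PA sentences, with at most $10^{6}$ inequalities, whose truth decides an arbitrary $\pspace$-complete language; this gives the claimed $\pspace$-hardness. There is no real obstacle here beyond bookkeeping; the only points deserving a sentence of care are (i) that reducing to a purely periodic quadratic does not inflate the atom count --- it does not, because the change is an affine substitution in the coefficient ring --- and (ii) that $|\B|$ and the size of the universal machine are genuinely independent of $\alpha$. If one wanted a sharper constant than $10^{6}$ one could hand-optimize the encoding $\B \hookrightarrow \{0,1\}^{6}$ and the transition alphabet for $\sqrt 2$, but the crude estimate already suffices.
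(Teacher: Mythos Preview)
Your proposal is correct and follows essentially the same approach as the paper: the paper's proof is the single line ``$\sqrt{2}+1 = [2;2,\dots]$ has minimum period $k=1$,'' after which one plugs $k=1$ into the bound $810{,}574 + 20k$ from the preceding theorem. Your added remarks---that the passage to the purely periodic $\gamma = 1+\sqrt{2}$ is an affine coefficient change that does not alter the atom count, and that $|\B|$ depends only on the fixed universal machine---are correct elaborations of points the paper leaves implicit in Section~\ref{sec:periodic}.
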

\begin{proof}
Note that $\sqrt{2}+1 = [2; 2, \dots]$ has minimum period $k=1$.
\end{proof}

\bigskip

\section{Quadratic irrationals: General lower bound}\label{sec:lower_bound}

In this section, we establish Theorem~\ref{th:quad_lower}. Its proof follows the proof of  Theorem~\ref{th:pspace} very closely.
For a Turing machine $\mathcal{M}$ and $s \in \N$, recall that in Theorem \ref{th:quad-pspace} we constructed an $\forall^{6} \exists^{4}\forall^{11}$ $\al$-PA formula $\textbf{Accept}$ a function $X: \{0,1\}^{s} \to \N$ such that for every input $x\in\{0,1\}^{s}$, \ $\textbf{Accept}(X(x))$ holds if and only if $\mathcal{M}$ accepts $x$ within space $\polyin(s)$. Here we show that we can extend the space in which $\mathcal{M}$ accepts $x$ in exchange for adding alternating blocks of quantifiers in the $\al$-PA formula. For $\lambda\in \N$ we define
\[
g_{0}(s) = s \and g_{r+1}(s) = g_{r}(s)\, 2^{g_{r}(s)},\; r \ge 0.
\]
The following is the main theorem we establish in this section.
\begin{thm}\label{thm:lowerbound} Let $\al\in \Qrc$ be a quadratic irrational and $r \ge 1$. For every Turing machine $\mathcal{M}$ and $s \in \N$, there exist an $\al$-\textup{PA} formula $\textup{\bf Accept}$ with $(r+3)$ of alternating quantifier blocks and a function $X: \{0,1\}^{s} \to \N$ such that for every input $x\in\{0,1\}^{s}$,
\[
\textup{\bf Accept}(X(x)) \text{ holds if and only if $\mathcal{M}$ accepts $x$ within space $g_r(s)$.}
\]
Moreover, both $X(x)$ and $\textup{\bf Accept}$ can be computed in time $O(c\.s)$ for all $x \in \{0,1\}^{s}$ and all $s \in \N$, where the constant $c$ only depends on $\alpha$ and $M$.
\end{thm}
\noindent By a basic diagonalization argument the problem whether given a Turing machine $\M$ halts on an input string $x$ within space $g_{r}(|\M|+|x|)$ itself requires space at least $g_{r}(|\M|+|x|)$ to decide. Theorem \ref{th:quad_lower} follows.

\medskip

\noindent Recall that in \hyperref[eq:Next]{$\Next^{B,B'B''}$}, if $v=q_{tM}$ and $v'=q_{tM+1}$ then the shifted convergent $q_{(t+\lambda)M}$ can be written as $cv + dv'$, with  $c,d \in \zz$ having lengths $\polyin(\lambda)$.
  The resulting sentence~\eqref{eq:pspace1} has length $\polyin(\lambda)$, and is $\pspace$-complete to decide.
  To prove Theorem \ref{thm:lowerbound} we need to construct $\al$-PA formula $S_r$ such that $S_{r}$ has length $\polyin(\ell)$, at most $r-2$ alternating quantifiers, and defines the graph of the  shift map
\[
\textbf{Shift}_{r} \; : \; q_{tM} \mapsto q_{(t+g_r(\lambda))M}.
\]
   The following construction is classical.
  It was first used in Meyer~\cite{M} to prove that WS1S has non-elementary decision complexity, and was later improved on in Stockmeyer~\cite{S}.
  An expository version  is given  in Reinhardt~\cite{GTW}.
  For clarity and completeness, we reproduce it below in the setting of WS1S. 
  Afterwards, we translate it to $\al$-PA formulas.


\subsection{A lower bound for WS1S}
Let \textup{WS1S} be the weak monadic second order theory of $(\N,+1)$, that is the monadic second order logic of $(\N,+1)$ in which quantification over sets is restricted to quantification over finite subsets of $\N$. Formulas in the language of this theory are called \textup{WS1S}-formulas. We will use lower case letters $x,y,t,u,z$ to denote variables ranging over $\N$ and use upper case letters $A,C,D,E$ to denote variables ranging over finite subsets of $\N$.

\medskip



\noindent We think of each subset $X \in \mathcal{P}_{\textup{fin}}(\N)$ as an infinite word in $\{0,1\}$ that is eventually $0$. When we write $X = x_0x_1\dots x_n0^{\omega}$, we mean that $X$ is the finite set $\{ i \in \{0,\dots,n\} \ : \ x_i = 1\}$. The relation $i \in X$ simply means that the $i$-th digit $x_i$ is~1.

\begin{lemC}[\cite{GTW}]\label{lem:expon} Let $\lambda, r \in \N$. There exists a \textup{WS1S}-formula $F_{r+1}^{\lambda}(x,y,A,C)$ which holds if and only if:
$$
\aligned
y \; &= \; x + g_{r+1}(\lambda), \\
A \; &= \; 0^{x} | 1 0 0 \dots 0 | 1 0 0 \dots 0| 1 0 0 \dots 0 | 1 0 0 \dots 0| \dots |1 0 0 \dots 0| 1 0^{\omega}\., \\
C \; &= \; 0^{x} |0 0 0 \dots 0 | 1 0 0\dots 0| 0 1 0 \dots 0 | 1 1 0 \dots 0 | \dots |1 1 1 \dots 1| 0 0^{\omega}\.,
\endaligned
$$
where $A,C$ each has $2^{g_{r}(\lambda)}+2$ blocks, all of which except the first and last have length $g_{r}(\lambda)$.
\end{lemC}

\noindent In statement of Lemma \ref{lem:expon}, the separator $|$ is inserted just to improve the readability. The blocks in $C$ represent the integers $0,1,\dots,2^{g_{r}(\lambda)}-1$ in binary.
The blocks in $A$ mark the beginning of the blocks in $C$.
The first `1' in $A$ is at position $x$ and the last `1' in $A$ is at position $y$.\footnote{Position indexing starts at $0$.}
In total, the difference $y-x$ is $g_{r}(\lambda)2^{g_{r}(\lambda)} = g_{r+1}(\lambda)$.
\begin{proof}[Proof of Lemma \ref{lem:expon}]
We construct the $F^{\lambda}_r$ recursively, starting with the base case:
$$
F^{\lambda}_{0}(x,y,A,C) \; \coloneqq  \; y = x + \lambda.
$$
Here $x+\lambda$ represents $\lambda$ iterations of the successor function $s_{\N}$. For $F_0^\lambda$ we will not need any conditions on $A,C$.\newline

\noindent Let $r>0$ and suppose we have already constructed $F^{\lambda}_r(x,y,A,C)$ with the desired property. We will exploit the  fact the blocks in $C$ represent the integers $0,1,\dots,2^{g_{r}(\lambda)}-1$ in binary, and that adding $1$ to the integer represented by one block gives the integer represented by the next block. For that, we will use least-significant digit first encoding. We recall the carry rule for addition by $1$ in binary: if $X = x_{0} x_{1} \dots $, $Y = y_{0} y_{1} \dots$, then $\sum_{i=0}^{\infty} y_i 2^{-i} = 1 + \sum_{i=0} x_i 2^{-i}$ if and only if for all $i\in \N$
\begin{align*}
x_{0} & = \lnot y_{0}  \\
x_{i+1} &= \begin{cases}
\lnot  y_{i+1}, & \text{if $x_i=1$ and $y_i=0$,}\\
y_{i+1}, & \text{otherwise.}\\
\end{cases}
\end{align*}
The two conditions can be expressed by \textup{WS1S}-formulas:
\begin{gather}
0 \in X \; \leftrightarrow \; 0 \notin Y; \label{eq:inc}\\
(i \in X \land  i \notin Y) \; \leftrightarrow \; (i+1 \in X  \; \leftrightarrow \; i+1 \notin Y). \label{eq:carry}
\end{gather}
Observe that if we apply these rules on blocks of length $g_{r}(\lambda)$, starting with $0\dots0$, then we get:
$$0 0 \dots 0| 1 0 \dots 0 | 0 1 \dots 0| \dots |1 1 \dots 1| 0 0 \dots 0|1 0 \dots 0| \dots $$
So the blocks cycle back to $0 \dots 0$ eventually.
Thus we will characterize $C$ as the binary words obtained by  applying this transformation rule until the block $0 \dots 0$ is reached. We define:
\begin{align}
\label{largealign}
\begin{split}
F_{r+1}^{\lambda}(x,y,A,C)  \; \coloneqq  \;   x < y ;\;\; \for z,w,t,D,E \; \Big( \;  F_{r}^{\lambda}(z,w,D,E)   \;\to\; \\
 \Big[z = x \lor z = y \;\to\; z \in A,\, z \notin C;\;\; z < x \lor y < z  \;\to\; z \notin A,\, z \notin C;\\
   z = x,\, z < t < w  \;\to\; t \notin A,\, t \notin C;\;
   x \le z < w \le y \;\to\; (z \in A \leftrightarrow w \in A); \\
  z \in A,\, w < y \;\to\; (z \in C \leftrightarrow w \notin C); \\ 
  x \le z <  w < y,\, z+1 \notin A \;\to\; \big(z \in C,\, w \notin C \;\leftrightarrow\; (z+1 \in C \leftrightarrow w+1 \notin C) \big); \\
  x \le z<  w < y,\, z+1 \in A \;\to\; (z \in C \to w \in C); \\
 w = y,\, z \le t < w \;\to\; t \in C 
  \Big] \; \Big).
\end{split}
\end{align}
For readability, we use commas and semicolons to denote conjunctions of atoms and sub-clauses in~\eqref{largealign}.
Lines~2-3 of~\eqref{largealign} set up the block structures in $A$ and $C$. They make sure that $A$ and $C$ are empty outside the range $[x,y]$, and that the blocks in $A$ are of the form $1 0 0 \dots$.
Line 4  of \eqref{largealign} expresses the increment rule~\eqref{eq:inc} for every two consecutive blocks in $C$.
Here $z,w$ represent the first digits in two consecutive blocks.
Line~5 of of \eqref{largealign} expresses the carry rule~\eqref{eq:carry}.
Line~6  of \eqref{largealign} ensures that the blocks in $C$ do not cycle back to $0\dots0$,
because their last digits cannot decrease from~1 down to~0.
The last line of~\eqref{largealign} ensures that the last block in $C$ is $1\dots1$.
\end{proof}

\noindent By induction, it is easy to see that $F^{\lambda}_{r}$ has $r$ alternating quantifier blocks, starting with $\for$.
Observe that $F^{\lambda}_{r+1}$ has $5$ more variables than $F^{\lambda}_{r}$, namely $z,w,D,E,t$. Therefore, we have again by induction that $F_{r}^{\lambda}$ has at most $5(r+1)$ variables.
We can also bound their lengths:
$$
\ell(F^{\lambda}_{0}) = O(\lambda) \and
\ell(F^{\lambda}_{r+1}) = \ell(F^{\lambda}_{r}) + O(1) = O(\lambda + r).
$$
Here $\ell(F_{0}^{\lambda})=O(\lambda)$ instead of $O(1)$ because we needed to iterate the successor function $s_{\N}$ $\lambda$ times to represent $y=x+\lambda$.


 \begin{thm}\label{th:Buchi_lower_bound}
 Deciding \textup{WS1S}-sentences $S$ with $k+3$ alternating quantifiers in requires space at least:
 $$\rho\, 2^{ \, \iddots^{ \; 2^{ \, \eta \ell(S) } }} \ \text{, \ where  the tower has height $k$,}$$
 and $\rho$, $\eta$ are absolute constants.
 \end{thm}
\begin{proof}[Sketch of proof]
Consider the following decidable problem: Given a Turing machine $\M$ and an input string $X$, does $\M$ halt on $X$ within space $g_{r}(|\M|+|X|)$?
By the same construction as in Theorem~\ref{th:pspace}, we can write down a \textup{WS1S}-sentence $S$ with length $O(|\M|+|X|)$ so that $S$ holds if and only if $\M$ halts on $x$ within space $g_{r}(|\M|+|X|)$.
 Here $\lambda = \Omega(|\M|+|X|)$.
The last part $[\for u,v,u',v' \dots]$ in~\eqref{eq:pspace1} should be replaced by:
  $$
\for x,y,A,C \; F_{r}(x,y,A,C) \to \text{ transition rules} \dots
$$
Here $x$ and $y$ are bits in the transcript $\Transcript = U(\encode{\M X})$, with $y = x + g_{r}(\lambda)$.\footnote{Here $U$ is the universal TM used to emulate $\M(X)$.}
The resulting sentence $S$ has the form $\ex \dots \for \dots  \lnot F_{r} \lor \dots$
Since $F_{r}$ has $r$ alternating quantifiers, $S$ has $r+2$ alternating quantifiers.
The length $\ell(S)$ is roughly the input length $|\M|+|X|$ plus $\ell(F_{r})$, which is also $O(|\M|+|X|)$.
\end{proof}

\subsection{Proof of Theorem~\ref{thm:lowerbound}}
  We first translate the \textup{WS1S}-formula $F^{\lambda}_{r}(x,y,A,C)$ with $r$ alternating quantifiers into an $\al$-PA formula $S_{r}$ with $(r+1)$ alternating quantifiers.
  To do this, we replace in $F^{\lambda}_r$ each variable $x$ ranging over individuals by a separate quadruple $(u_{x},v_{x},u'_{x},v'_{x})$, where $(u_{x},v_{x}) = (p_{xM},q_{xM})$ and $(u'_{x},v'_{x})=(p_{xM+1},q_{xM+1})$,
and add the condition $\hyperref[eq:ConvM]{\ConvM}(u_{x},v_{x},u'_{x},v'_{x})$.
We replace each variable ranging over sets by an integer variable. We replace the relation $x \in X$ in $F^{\lambda}_r$ by  the relation whether $x$ is in $\Ost(X)$. By ~\eqref{eq:In_ex} and~\eqref{eq:In_for}, this relation is definable by an $\ex$ $\al$-PA formula and by an $\forall$ $\al$-PA formula. Recall from Fact~\ref{fact:div_k} that there  are constants $c,d\in \Z$ such that if $v = q_{tM}$ and $v' = q_{tM+1}$, then $q_{(t+1)M} = c v + d v'$ for all $t\in \N$.
We replace every $x+1$ term in $F_r^{\lambda}$ by $c v_{x} + d v'_{x}$.
Similarly, note there are  $c_{\lambda},d_{\lambda} \in \Z$ with  $\log(c_{\lambda}),\log(d_{\lambda}) = O(\lambda)$ such that $q_{(t+\lambda)M} = c_{\lambda} v + d_{\lambda} v'$ for all $t$.
So the relation $y = x+\lambda$ in $F_{0}^{\lambda}$ is replaced by $v_{y} = c_{\lambda}v_{x} + d_{\lambda}v'_{x}$.
Observe that $S_{0}$ has just $O(1)$ terms, instead of $O(\lambda)$ terms like $F_{0}$.
By induction, $S_{r}$ has $O(r)$ inequalities and variables.
The total length $\ell(S_{r})$ (including symbols and integer coefficients) is still $O(r+\lambda)$.

\medskip

\noindent Because of the $\ConvM$ predicate, $S_{0}$ now has one quantifier.
  For $r > 0$,  we can merge the $\for$ quantifiers in $\ConvM$ predicates with the $\for z,w,t,\dots$ quantifiers in $F_{r}^{\lambda}$ (of course replaced by quadruples).
  Because $x\in \Ost(X)$ is definable by both an $\ex$ PA-formula and an $\for$ $\al$-PA formula,
  the body of the sentence $S_{r+1}$, consisting of Boolean combinations in  $\in$/$\notin$, can be written using only $\for$ quantifiers.
  These extra $\for$ quantifiers can again be merged into the $\for z,w,t$ part.
  This means $S_{r+1}$ has only one more alternating quantifiers than $S_{r}$.
  So $S_{r}(u_{x},v_{x},u'_{x},v'_{x},u_{y},v_{y},u'_{y},v'_{y},A,C)$ is an $\al$-PA formula quantifier formula with $(r+1)$ alternating quantifier blocks.

\medskip

\noindent Now we are back to encoding Turing machine computations. We give a brief outline how the construction follows the proof of Theorem~\ref{th:pspace}. In the definition of $\textbf{Transcript}_{\forall\exists}$ in~\eqref{eq:patranscript}, we replace  $[\for u,v,u',v' \dots]$ by:
  $$\aligned
\for \, & u_{x},v_{x},u'_{x},v'_{x},u_{y},v_{y},u'_{y},v'_{y},A,C \;\; \\ 
& \qquad \quad \big( S_{r}(u_{x},v_{x},u'_{x},v'_{x},u_{y},v_{y},u'_{y},v'_{y},A, C)\;\land \; v_{y} \le \tau \big) 
\ \to \ \text{ transition rules}\dots
\endaligned
  $$
In these transition rules, \hyperref[eq:Read]{$\Read^{B,B',B''}$} is kept as before with $u_{x},v_{x},u'_{x},v'_{x}$, but \hyperref[eq:Next]{$\Next^{B,B',B''}$} can be rewritten using the shifted convergents $u_{y},v_{y}, u'_{y},v'_{y}$.
Altogether, this expresses the transition rule for each jump $y = x + g_{r}(\lambda)$.
The resulting formula $\textbf{Transcript}'$ has the form $\for \dots  \lnot S_{r} \lor \dots$. Since $S_{r}$ has $r+1$ alternating quantifiers, this formula has $r+2$ alternating quantifiers.
We now construct $\textbf{Accept}_{\mathcal{M},s,r}$ using $\textbf{Transcript}'$ as is in \eqref{eq:pspace1}. This $\al$-PA formula has $r+3$ alternating blocks of quantifiers and the number of variables and inequalities used is just  $O(r)$.

\bigskip

\section{Non-quadratic irrationals: Undecidablity}
In this section, we consider the case that $\al$ is non-quadratic. As pointed out in the introduction it follows from \cite{HT-Proj} that $\al$-PA is undecidable whenever $\al$ is non-quadratic. Here we will show that even $\al$-PA sentences with only four alternating quantifier blocks are undecidable. We prove a slightly stronger result for which we have to introduce an extension of $\al$-PA.

\medskip

\noindent Let $K$ be a subfield of $\R$. An \emph{$K$-Presburger sentence} (short: \emph{$K$-PA sentence}) is a statement of the form
\begin{equation}\label{eq:sentenceab}
Q_{1} \x_{1} \in \zz^{n_{1}} \; \dots \; Q_{r} \x_{r} \in \zz^{n_{r}} \;\; \Phi(\x_{1},\dots,\x_{r}),
\end{equation}
where $Q_{1},\dots,Q_{r} \in \{\for,\ex\}$ are $r$ alternating quantifiers, and $\Phi$ is a Boolean combination of linear inequalities in $\x_{1},\dots,\x_{r}$
with coefficients and constant terms in $K$. We define $K$-PA formulas and other relevant notations analogous to the case of $\alpha$-PA sentences in Section \ref{section:alphapa}. The following is the main result of this section.

\begin{thm}\label{thm:undtwo} Suppose that $1,\al,\beta$ are $\Q$-linearly independent. Then $\exists^k \forall^k\exists^k \forall^k$ $\Q(\al,\beta)$-PA sentences are undecidable, where $k=20000$.
\end{thm}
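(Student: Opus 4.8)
## Proof Proposal for Theorem \ref{thm:undtwo}

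The plan is to reduce the Halting Problem to deciding $(\al,\beta)$-PA sentences, following the strategy of \cite{HT-Proj} but using Ostrowski representations to keep the number of alternating quantifier blocks bounded by four. The key observation is that $\Q$-linear independence of $1,\al,\beta$ lets us simulate two ``independent'' irrational scales simultaneously, which is enough to encode unbounded computation. First, I would use the existential and universal $\al$-PA formulas from Section~\ref{sec:Ostrowski} (namely \eqref{eq:In_ex} and \eqref{eq:In_for}, together with $\After$, $\wt\After$, $\Comp$) to express membership in $\Ost_\al(X)$ and concatenation of Ostrowski expansions, and then do the same for $\beta$. Since $\al$ is non-quadratic, its continued fraction is aperiodic and the partial quotients $a_n$ are unbounded; this unboundedness is precisely what will allow us to encode arbitrarily long tape configurations inside a single integer, in contrast to the quadratic case where the periodicity forced a fixed bit-alphabet.

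The core step is to show that the graph of a suitable ``counter'' or ``successor-of-convergent'' relation for the $\al$-system can be defined with few quantifier alternations, and that a second, $\beta$-indexed copy can be laid down alongside it so that the two together pin down a position counter without any appeal to periodicity-based tricks like $\ConvM$. Concretely, I would encode a Turing machine transcript $\Transcript$ much as in Section~\ref{sec:pspace_proof}: blocks of Ostrowski digits of an integer $T$ encode successive tape configurations, with the $\al$-convergents marking block boundaries. The difference from the quadratic case is that we can no longer use the arithmetic-progression trick $q_{(t+\lambda)M} = cq_{tM}+dq_{tM+1}$ to shift by a fixed number of block-lengths; instead I would use the $\beta$-scale to provide a second family of markers and define the ``shift by one configuration'' relation implicitly as a $\forall\exists$ (or $\exists\forall$) predicate over pairs of $\al$- and $\beta$-convergents that are forced to be in lockstep. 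The undecidability then follows because there is no a priori space bound: a transcript can be arbitrarily long, so $T$ ranges over all of $\N$, and $\textbf{Accept}(X(x))$ holds iff $\M$ halts on $x$.

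The quantifier bookkeeping is the delicate part. Each of the building-block predicates ($\ZeroOne$-analogues, $\Pref$-analogues, the transition-rule predicate, the ``ends in yes'' predicate) is of the form $\forall^*\exists^*$ or $\exists^*\forall^*$ in prenex form; when we conjoin them we can merge adjacent $\forall$ blocks and adjacent $\exists$ blocks, and the outermost $\exists T$ adds one more. The heart of the matter is that the ``transcript is consistent'' predicate, which in the quadratic case was $\forall^4\exists^6$, must here also be written with only two alternating blocks despite now quantifying over \emph{two} interleaved systems of convergents — this is where I expect the main obstacle to lie, since naively defining the $\beta$-markers relative to the $\al$-markers costs an extra alternation. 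The resolution should be to quantify the $\al$- and $\beta$-convergent quadruples in the \emph{same} outer block and express their compatibility (``the $t$-th $\al$-block starts where the $t$-th $\beta$-block starts'') as an inner condition, using that both $v\in\Ost_\al(\cdot)$ and $v\in\Ost_\beta(\cdot)$ admit both $\exists$- and $\forall$-definitions so the inner Boolean combination collapses to a single quantifier type. Carrying this out carefully, padding with the universal ``consecutive convergents'' formula $\Conv$ of \eqref{eq:multi_consec} to name enough convergents at once, and finally tracking the constant $K=20000$ through the reduction, completes the proof; Theorem~\ref{th:nonquad-undec} is then the special case $\beta = \al^2$, for which $1,\al,\al^2$ are $\Q$-linearly independent exactly because $\al$ is non-quadratic.
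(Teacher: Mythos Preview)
Your proposal has a genuine gap, and the paper's route is quite different from yours.

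First, a factual slip: the hypothesis is only that $1,\alpha,\beta$ are $\Q$-linearly independent; $\alpha$ may itself be quadratic (e.g.\ $\alpha=\sqrt 2$, $\beta=\sqrt 3$), and even in the application $\beta=\alpha^2$ with $\alpha$ non-quadratic it does \emph{not} follow that the partial quotients of $\alpha$ are unbounded. So you cannot rely on ``arbitrarily large Ostrowski digits'' to store tape cells. More seriously, the step you yourself flag as the main obstacle is not actually resolved. Your plan replaces the periodicity-based marker $\ConvM$ by a second family of $\beta$-convergents that are somehow ``in lockstep'' with the $\alpha$-convergents marking block boundaries of $T$. But $\alpha$-convergents and $\beta$-convergents live in unrelated numeration systems; there is no mechanism, with linear inequalities over $\Z[\alpha,\beta]$, to force a $\beta$-convergent to sit at a prescribed position in the $\alpha$-Ostrowski expansion of~$T$, nor to express ``$q'_m$ is the $t$-th marker iff $q_n$ is''. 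Quantifying both kinds of convergent tuples in one block and asserting a ``compatibility condition'' presupposes exactly the definability you are trying to establish. Without a concrete replacement for $\ConvM$ and for the shift identity $q_{(t+\lambda)M}=cq_{tM}+dq_{tM+1}$, the transcript-in-one-integer approach of Section~\ref{sec:pspace} does not go through.

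The paper does not encode transcripts at all. Its single use of $\Q$-linear independence is Lemma~\ref{lem:injectivity}: the map $(X_1,X_2)\mapsto |f_\alpha(X_1)-f_\beta(X_2)|$ is injective on $\N^2$. From this one defines an $\exists^{5}\forall^{4}$ relation $\Best(d,e,X_1,X_2,s)$ (Lemma~\ref{lem:Best}) which, roughly, selects the unique $s\le d$ for which some $|f_\alpha(s)-f_\beta(w)|$ with $w\le e$ best approximates $f_\alpha(X_2)-f_\alpha(X_1)$. Iterating this with carefully interleaved cut-offs (Lemmas~\ref{lem:interior},~\ref{lem:density}) yields an $\exists^{*}\forall^{*}$ $(\alpha,\beta)$-formula $\Member(\bm X,s,t)$ such that \emph{every} finite $S\subseteq\N^2$ equals $\{(s,t):\Member(\bm X,s,t)\}$ for some parameter $\bm X\in\N^4$ (Theorem~\ref{thm:codepairs}, Lemma~\ref{lem:ea}). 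Undecidability then comes from the classical coding of a universal TM run by twelve finite subsets of $\N^2$---eight for state/head position, four for tape contents---as in \cite[Th.~16.5]{Thomas}: the halting sentence has shape $\exists\bm X_i,\bm Y_j\ \forall s,t,t'\ (\ldots\Member\ldots\lnot\Member\ldots)$, and since $\Member$ is $\exists^{*}\forall^{*}$ this is $\exists^{*}\forall^{*}\exists^{*}\forall^{*}$ as required.
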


\noindent When $\alpha$ is non-quadratic, then $1,\alpha,\alpha^2$ are $\Q$-linearly independent. As $\Q(\al,\al^2)=\Q(\al)$, Theorem \ref{th:nonquad-undec} follows from Theorem \ref{thm:undtwo}.

\subsection{Further tools}
In this section we are working with two different irrationals $\alpha$ and $\beta$, and we will need to refer to the Ostrowski representation based on $\alpha$ and $\beta$.
We denote by $p_{n}/q_{n}$ and $p'_{n}/q'_{n}$ the $n$-th convergent of $\alpha$ and  $\beta$, respectively.
Let $\Ost_{\alpha} \coloneqq  \{ q_{n} \, : \, n\in \N\}$ and  $\Ost_{\beta} \coloneqq  \{ q'_{n} \, : \, n\in \N\}$.
For $X \in \N$, denote by $\Ost_{\alpha}(X)$ the set of $q_{n}$ with non-zero coefficients in the $\alpha$-Ostrowski representation of $X$.
Then $\Ost_{\beta}(X)$ is defined accordingly for the $\beta$-Ostrowski representation of $X$.
All earlier notations can be easily adapted to $\alpha$ and $\beta$ separately.
For brevity, we define the remaining functions and notations just for $\alpha$.
The corresponding versions for $\beta$ are defined accordingly, with obvious relabelings.

\medskip


\nin
For $X \in \N$ with $\alpha$-Ostrowski representation $X = \sum_{n=0}^{\infty} b_{n+1} q_{n}$ and $d\in \Ost_{\alpha}$, define
\begin{equation}\label{eq:restr}
\restr{X}{d}{\alpha} \; \coloneqq \;  \sum_{n\in\N,\, q_{n} \leq d} b_{n+1} q_{n}.
\end{equation}
The relation $Y = \restr{X}{d}{\alpha}$ from~\eqref{eq:restr} is $\exists^{2}$-definable by $\al$-PA formula:
\[
 Y < v^{+} \; \land \; \ex Z,Z' \; \Comp(u,v,u^{+},v^{+},Y,Z,Z') \; \land \; Y + Z = X.
\]
Here $\Comp$ is from~\eqref{eq:Compatible}.

\medskip



\noindent The function $f_{\alpha}$ defined in \eqref{eq:f} and its interaction with the corresponding function $f_{\beta}$ play a crucial role. We collect two easy facts about $f_{\alpha}$ here.

\begin{fact}\label{fact:ostlocal} Let $X\in \N$. Then there is an interval $I\subseteq \R$ around $f_{\alpha}(X)$ and $d\in \Ost_{\alpha}$ such that for all $Y\in \N$
\[
f_{\alpha}(Y) \in I \Longrightarrow \restr{Y}{d}{\alpha} = X.
\]
\end{fact}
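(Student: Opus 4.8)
The statement is a local-finiteness property of the Ostrowski representation: knowing $f_\alpha(Y)$ to sufficient precision pins down the ``low-order'' part of $Y$, i.e.\ those digits attached to convergents $q_n \le d$. The plan is to exploit the exact formula $f_\alpha(Y) = \sum_{n} b_{n+1}^{(Y)}\be_n$ from Fact~\ref{fact:f} together with the geometric decay of the differences $\be_n$ recorded in~\eqref{eq:beta_dec} and~\eqref{eq:sum_beta}. Write $X = \sum_{n=0}^N b_{n+1} q_n$ in its $\alpha$-Ostrowski representation and set $d := q_N$ (if $X = 0$ take $d = q_0$); thus $\restr{X}{d}{\alpha} = X$ trivially. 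The claim becomes: there is an interval $I$ around $f_\alpha(X)$ so that $f_\alpha(Y)\in I$ forces the first $N+1$ Ostrowski digits of $Y$ to coincide with those of $X$.

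First I would recall the ``greedy'' nature of the Ostrowski expansion: the digits $b_{n+1}^{(Y)}$ are determined one at a time, from the top, by which sub-interval of $[-1/\zeta_\alpha,\, 1-1/\zeta_\alpha)$ the value $f_\alpha(Y)$ lies in — this is exactly the content of the interval-partition underlying Fact~\ref{f:Ostrowski} and Fact~\ref{fact:f}. Concretely, the set of $Y$ whose Ostrowski expansion begins with a fixed admissible prefix $(b_1,\dots,b_{n+1})$ is precisely the set of $Y$ with $f_\alpha(Y)$ in a half-open interval $J(b_1,\dots,b_{n+1})$ whose endpoints are partial sums $\sum_{j\le n} b_{j+1}\be_j$ shifted by tail quantities like $-\be_n$ or $\be_{n+1}$; these tail quantities are controlled by~\eqref{eq:sum_beta}. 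The key inequality is that after fixing the prefix up through $q_N$, the remaining variation in $f_\alpha(Y)$ (coming from digits on $q_{N+1}, q_{N+2},\dots$) is bounded in absolute value by $|\be_N|$ — indeed it lies strictly between $\be_{N+1}$ (or $-\be_{N+1}$, depending on parity) and $-\be_N$ (resp.\ $\be_N$), by the same estimate used in the proof of Fact~\ref{fact:After}(i). Hence the interval $I := J(b_1,\dots,b_{N+1})$ containing $f_\alpha(X)$ has the property that any $Y$ with $f_\alpha(Y)\in I$ has its first $N+1$ digits equal to those of $X$, and therefore $\restr{Y}{q_N}{\alpha} = X$.

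The bookkeeping I would actually carry out: (i) pin down the endpoints of $J(b_1,\dots,b_{N+1})$ explicitly — the lower endpoint is $\sum_{j=0}^{N} b_{j+1}\be_j$ plus the minimal achievable tail, the upper endpoint is that sum plus the maximal achievable tail, where ``tail'' ranges over $\sum_{n>N} b_{n+1}\be_n$ over admissible continuations; (ii) use~\eqref{eq:beta_dec} and~\eqref{eq:sum_beta} to show the tail lies in an interval of length $< |\be_N| \le |\be_0| < 1$ — in fact an open interval with endpoints $0$ and $\pm|\be_N|$ adjusted by the admissibility constraint $b_{N+1} = a_{N+1} \Rightarrow b_{N+2} = 0$; (iii) check that distinct admissible prefixes of length $N+1$ give disjoint such intervals, so that $f_\alpha(X)\in I$ already selects the prefix of $X$ and no other. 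Steps (i)–(iii) are essentially a re-derivation of the uniqueness half of Fact~\ref{f:Ostrowski} made quantitative, so I would cite~\cite[Ch.~II-\S4]{RS} for the partition structure and only verify the length bound.

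The main obstacle is the boundary/carry behaviour: because $I$ is half-open and because the admissibility rule couples consecutive digits ($b_n = 0$ whenever $b_{n+1} = a_{n+1}$), one has to be slightly careful that a $Y$ with $f_\alpha(Y)$ very near an endpoint of $I$ does not sneak into a neighbouring prefix-class, and conversely that the interval $I$ is genuinely nonempty and open enough to be useful (the statement only asserts existence of \emph{some} interval around $f_\alpha(X)$, so it suffices to take a possibly smaller open sub-interval of $J(b_1,\dots,b_{N+1})$ strictly containing $f_\alpha(X)$ — this sidesteps the half-open endpoint issue entirely, since $f_\alpha(X)$ lies in the interior of its own prefix-interval as $X$'s expansion is finite, hence does not realise the extremal tail). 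So in the write-up I would: first produce $J := J(b_1,\dots,b_{N+1})$, note $f_\alpha(X)$ is in its interior, shrink to an open $I \ni f_\alpha(X)$ with $\overline{I}\subseteq J$, and then the implication $f_\alpha(Y)\in I \Rightarrow \restr{Y}{q_N}{\alpha} = X$ is immediate from the prefix-interval characterisation.
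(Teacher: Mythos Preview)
Your proposal is correct and takes essentially the same approach as the paper: both exploit that the set of $Y$ sharing a fixed Ostrowski prefix corresponds to an interval of $f_\alpha$-values, with $f_\alpha(X)$ lying in the interior of its own prefix interval. The paper's version is more concrete and shorter --- rather than bounding the tail abstractly via~\eqref{eq:beta_dec} and~\eqref{eq:sum_beta} and invoking the partition structure, it simply takes $d=q_m$ (with $m$ the top index of $X$), writes down explicit endpoints $f_\alpha(X+q_{m+3}) < f_\alpha(X) < f_\alpha(X+q_{m+2})$ (for $m$ even), and cites \cite[Fact~2.13]{H} (the order on $f_\alpha(\N)$ is determined by the first differing Ostrowski digit) for the implication; your self-contained derivation of the same interval structure is a valid substitute for that citation.
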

\begin{proof}
Let $\sum_{n=0}^{m} b_{n+1} q_{n}$ be the $\alpha$-Ostrowski representation of $X$.
Without loss of generality, we may assume that $\alpha q_{m} - p_{m} >0$.
Then set
\[
Z_2 = X + q_{m+2} \hbox{ and } Z_1 = X + q_{m+3}.
\]
Since $\alpha q_{m+2} - p_{m+2} >0$ and $\alpha q_{m+3} - p_{m+3} <0$, we get from Fact \ref{fact:f} that
\[
f_{\alpha}(Z_1) < f_{\alpha}(X) < f_{\alpha}(Z_2).
\]
Now it follows easily from \cite[Fact 2.13]{H} and Fact \ref{fact:f} that for all $Y\in \N$
\[
f_{\alpha}(Z_1) < f_{\alpha}(Y) < f_{\alpha}(Z_2) \Longrightarrow \restr{Y}{q_{m}}{\alpha} = X,
\]
as desired.
\end{proof}

\begin{fact}\label{fact:ostlocal2} Let $X\in \N$ and let $J\subseteq \R$ be an open interval around
$f_{\alpha}(X)$. Then there is $d\in \Ost_{\alpha}$ such that for all $Y\in \N$
\[
 \restr{Y}{d}{\alpha} = X \Longrightarrow f_{\alpha}(Y) \in J.
\]
\end{fact}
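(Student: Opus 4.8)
The plan is to read off the conclusion from the explicit formula $f_{\alpha}(X)=\sum_{n}b_{n+1}\be_{n}$ of Fact~\ref{fact:f}, using that the truncation $\restr{\cdot}{d}{\alpha}$ merely fixes an initial segment of the $\alpha$-Ostrowski digits. Write the $\alpha$-Ostrowski representation of $X$ as $X=\sum_{n=0}^{N}b_{n+1}q_{n}$. First I would choose $\ve>0$ small enough that $(f_{\alpha}(X)-\ve,\ f_{\alpha}(X)+\ve)\subseteq J$. Since $|\be_{n}|=|q_{n}\alpha-p_{n}|$ is strictly decreasing by~\eqref{eq:beta_dec} and tends to $0$ (as $q_{n+1}\to\infty$ and $|\be_{n}|<1/q_{n+1}$), pick an index $m\ge N$ with $|\be_{m}|<\ve$, and set $d:=q_{m}$. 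Note that $X<q_{m+1}$ since $\Ost(X)\subseteq\{q_{0},\dots,q_{N}\}\subseteq\{q_{0},\dots,q_{m}\}$, so the equation $\restr{Y}{d}{\alpha}=X$ is a meaningful constraint on~$Y$.

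Next, let $Y\in\N$ with $\restr{Y}{d}{\alpha}=X$. By definition of $\restr{\cdot}{d}{\alpha}$, the $\alpha$-Ostrowski digits of $Y$ agree with those of $X$ at all indices $\le m$, i.e.\ $[q_{n}](Y)=b_{n+1}$ for $n\le m$, and $\Ost(Y-X)\subseteq\{q_{m+1},q_{m+2},\dots\}$. Splitting the Ostrowski representation of $Y$ at $q_{m}$ and applying the formula of Fact~\ref{fact:f} to each of the two pieces gives the additivity
\[
f_{\alpha}(Y)\;=\;f_{\alpha}\!\bigl(\restr{Y}{d}{\alpha}\bigr)+f_{\alpha}(Y-X)\;=\;f_{\alpha}(X)+f_{\alpha}(Y-X).
\]
It then remains to bound $|f_{\alpha}(Y-X)|$. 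This is exactly the estimate already carried out in the proof of Fact~\ref{fact:After}(i): writing $f_{\alpha}(Y-X)=\sum_{n>m}[q_{n}](Y)\be_{n}$, grouping terms by the parity of $n$, using $\be_{n}>0$ for even $n$ and $\be_{n}<0$ for odd $n$ from~\eqref{eq:odd_even}, the digit bounds $0\le[q_{n}](Y)\le a_{n+1}$, and the telescoping identity~\eqref{eq:sum_beta}, one gets that $f_{\alpha}(Y-X)$ lies in the interval with endpoints $-\be_{m}$ and $-\be_{m+1}$; hence $|f_{\alpha}(Y-X)|\le|\be_{m}|$. Therefore $|f_{\alpha}(Y)-f_{\alpha}(X)|\le|\be_{m}|<\ve$, so $f_{\alpha}(Y)\in J$, which is the claim.

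This statement is essentially a converse to Fact~\ref{fact:ostlocal}, and the argument is short; the only point deserving a little care is the additivity $f_{\alpha}(Y)=f_{\alpha}(X)+f_{\alpha}(Y-X)$. One must check that putting the fixed low-order digits of $X$ next to the free high-order digits of $Y-X$ produces a genuine $\alpha$-Ostrowski representation of $Y$ with no carrying. This holds because the admissibility condition ``$b_{n}=0$ whenever $b_{n+1}=a_{n+1}$'' in~\eqref{eq:Ost} is local and is automatically inherited by both halves of a valid representation cut at $q_{m}$ — this is precisely what is recorded by $Y=\restr{Y}{d}{\alpha}+\bigl(Y-\restr{Y}{d}{\alpha}\bigr)$ together with the $\Comp$ compatibility condition of Fact~\ref{fact:Compatible}. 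Apart from this bookkeeping, everything reduces to the estimates for $f_{\alpha}$ established earlier in Section~\ref{sec:ctn_frac}.
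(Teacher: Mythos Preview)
Your proof is correct and follows essentially the same approach as the paper's: both arguments use the formula $f_{\alpha}(Z)=\sum_{n}[q_{n}](Z)\,\be_{n}$ from Fact~\ref{fact:f} to see that fixing the low-order Ostrowski digits of $Y$ to those of $X$ forces $f_{\alpha}(Y)$ into an interval around $f_{\alpha}(X)$ of width controlled by $|\be_{m}|$. The paper phrases the bound as $f_{\alpha}(X+q_{n+1})<f_{\alpha}(Y)<f_{\alpha}(X+q_{n})$ and appeals to \cite[Fact~2.13]{H}, whereas you derive the same estimate directly via the additivity $f_{\alpha}(Y)=f_{\alpha}(X)+f_{\alpha}(Y-X)$ and the bound already established in the proof of Fact~\ref{fact:After}(i); this makes your version slightly more self-contained.
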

\begin{proof}
Let $\sum_{n=0}^{m} b_{n+1} q_{n}$ be the $\alpha$-Ostrowski representation of $X$. Let $n\in \N$ be such that
\begin{itemize}
\item $n > m+1$,
\item $\alpha q_{n} - p_{n}>0$ and
\item $\big(f_{\alpha}(X) + (\alpha q_{n+1} - p_{n+1}),f_{\alpha}(X) + (\alpha q_{n} - p_{n})\big) \subseteq J$.
\end{itemize}
Let $Y \in \N$ be such that $\restr{Y}{q_{n+2}}{\alpha} = X$. It is left to show that $f_{\alpha}(Y) \in J$. By Fact \ref{fact:f} and \cite[Fact 2.13]{H} we get that
\[
f_{\alpha}(X) + (\alpha q_{n+1} - p_{n+1}) = f_{\alpha}(X+q_{n+1})<f_{\alpha}(Y) < f_{\alpha}(X + q_{n}) = f_{\alpha}(X) + (\alpha q_{n} - p_{n}).
\]
Thus $f_{\alpha}(Y)\in J$.
\end{proof}

\subsection{Uniform definition of all finite subsets of $\N^2$} Let $\alpha,\beta$ be two positive irrational numbers such that $1,\alpha,\beta$ are $\Q$-linearly independent. The goal of this section is to produce a $6$-ary $\Q(\alpha,\beta)$-PA formula $\Member$ such that for every set $S\subseteq \N^2$ there is $\bm{X} \in \N^4$ such that for all $(s,t) \in \N^2$,
\[
(s,t) \in S \Longleftrightarrow \Member(\bm{X},s,t).
\]
The $\Q$-linear independence of $1,\alpha,\beta$ is necessary as we will see that the existence of such a relation implies the undecidability of the theory. The failure of our argument in the case of $\Q$-linear dependence of $1,\alpha,\beta$ can be traced back to the fact that the following lemma fails when $1,\alpha,\beta$ are $\Q$-linearly dependent.

\medskip

\nin
Hereafter, we let $\cj X = (X_{1},X_{2}), \cj Y = (Y_{1},Y_{2})$ and $\cj Z = (Z_{1},Z_{2})$.

\begin{lem}\label{lem:injectivity}
Let $\cj X, \cj Y \in \N^2$. Then
\[
|f_{\alpha}(X_1) - f_{\beta}(X_2)|=|f_{\alpha}(Y_1) - f_{\beta}(Y_2)| \;\Longrightarrow\; \cj X= \cj Y.
\]
\end{lem}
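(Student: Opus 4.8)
The plan is to extract from the hypothesis a $\Q$-linear relation among $1,\alpha,\beta$, and then use the $\Q$-linear independence of $1,\alpha,\beta$ to force it to be trivial, which in turn forces $\cj X=\cj Y$. Recall from~\eqref{eq:f} and Fact~\ref{fact:f} that for any $Z\in\N$ we have $f_{\alpha}(Z)=\alpha Z-g_{\alpha}(Z)$ with $g_{\alpha}(Z)\in\N$, and similarly $f_{\beta}(Z)=\beta Z-g_{\beta}(Z)$; both $f_{\alpha}(Z)$ and $f_{\beta}(Z)$ lie in fixed unit-length intervals. So $f_{\alpha}(X_1)-f_{\beta}(X_2)=\alpha X_1-\beta X_2-m$ and $f_{\alpha}(Y_1)-f_{\beta}(Y_2)=\alpha Y_1-\beta Y_2-n$, where $m=g_{\alpha}(X_1)-g_{\beta}(X_2)\in\Z$ and $n=g_{\alpha}(Y_1)-g_{\beta}(Y_2)\in\Z$.

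First I would remove the absolute values. The equality $|f_{\alpha}(X_1)-f_{\beta}(X_2)|=|f_{\alpha}(Y_1)-f_{\beta}(Y_2)|$ gives two cases, depending on whether the two quantities $\alpha X_1-\beta X_2-m$ and $\alpha Y_1-\beta Y_2-n$ are equal or opposite. In the ``equal'' case we get
\[
\alpha(X_1-Y_1)-\beta(X_2-Y_2)=m-n\in\Z,
\]
and since $1,\alpha,\beta$ are $\Q$-linearly independent over $\Q$ (hence over $\Z$), all three coefficients vanish: $X_1=Y_1$, $X_2=Y_2$, $m=n$. That is exactly $\cj X=\cj Y$. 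In the ``opposite'' case we get
\[
\alpha(X_1+Y_1)-\beta(X_2+Y_2)=m+n\in\Z,
\]
and again $\Q$-linear independence forces $X_1+Y_1=0$ and $X_2+Y_2=0$; since all of $X_1,Y_1,X_2,Y_2$ are natural numbers, this yields $X_1=Y_1=X_2=Y_2=0$, so once more $\cj X=\cj Y$.

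The argument is short and the only real point of care — the part I expect to take the most attention — is handling the two sign cases of the absolute value cleanly and making sure the ``opposite'' case really is killed by nonnegativity of the coordinates rather than requiring a separate trick; this is where the hypothesis $\cj X,\cj Y\in\N^2$ (as opposed to $\Z^2$) is genuinely used. One should also note explicitly that $\Q$-linear independence of $1,\alpha,\beta$ upgrades to $\Z$-linear independence, which is immediate by clearing denominators, so that the integer relations above indeed force the coefficients to be zero. No deeper properties of Ostrowski representations are needed beyond the decomposition $f_{\alpha}(Z)=\alpha Z-g_{\alpha}(Z)$ from Fact~\ref{fact:f}.
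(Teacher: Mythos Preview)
Your proof is correct and follows essentially the same approach as the paper: rewrite $f_{\alpha}$ and $f_{\beta}$ as $\alpha X_i - U_i$ and $\beta X_j - U_j$ with integer $U$'s, then invoke the $\Q$-linear independence of $1,\alpha,\beta$. The paper's proof is terser and leaves the two-sign case analysis implicit, whereas you spell it out carefully (including the observation that the ``opposite'' case forces all coordinates to be zero by nonnegativity); this added detail is a genuine improvement in clarity but not a different method.
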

\begin{proof}
Assume the LHS holds, then there are $U_1,U_2,V_1,V_2\in \N$ such that:
\[
\bigl|\alpha X_1 - U_1 - \beta X_2 + U_2\bigr|  \, = \, \bigl|\alpha Y_1 - V_1 - \beta Y_2 + V_2\bigr|\ts.
\]
By $\Q$-linear independence of $1,\alpha,\beta$, we get that $X_1 = Y_1$ and $X_2 = Y_2$.
\end{proof}

\begin{defi} Define $g : \N^4\to \R$ to be the function that maps $(\cj X, \cj Y)$ to
\[
\big|f_{\alpha}(X_2) - f_{\alpha}(X_1)-|f_{\alpha}(Y_2) - f_{\beta}(Y_1)|\big|.
\]
\end{defi}

\begin{defi}
  Let $\Best$ be the relation on $\N \times \N \times \N^2\times \N$ that holds precisely for all tuples $(d,e,\cj X,Y_{1})$ for which there is a $Y_{2} \in \N$ such that
\begin{itemize}
\item $Y_{1} \leq d$, $Y_{2} \leq e$,
\item $g(\cj X, \cj Y) < g(\cj X,\cj Z)$ for all $\cj Z \in \N_{\leq d} \times \N_{\leq e}$ with $\cj Z \neq \cj Y$.
\end{itemize}
\end{defi}

\noindent Observe that for given $(d,e,\cj X) \in \N \times \N \times \N^2$ there is at most one $Y_{1} \in \N_{\leq d}$ such that $\Best(d,e,\cj X,Y_{1})$ holds. We will later see in Lemma \ref{lem:interior} that for given $d \in \N$ we can take $e\in \N$ large enough such that for all $X_1 \in \N$ and  $Y_{1} \leq d$ the set
\[
\{ X_2\in \N \ : \ \Best(d,e,X_1,X_2,Y)\}
\]
is cofinal in $\N$.

\begin{lem}\label{lem:Best} $\Best $ is definable by an $\exists^5 \forall^4$ $\Q(\alpha,\beta)$-PA formula.
\end{lem}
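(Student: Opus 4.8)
The plan is to unwind the definitions of $\Best$ and $g$ and express every clause as a short $(\alpha,\beta)$-PA formula, then count the quantifier alternations carefully. Recall that $\Best(d,e,\cj X,Y_1)$ holds iff there is $Y_2\in\N$ with $Y_1\le d$, $Y_2\le e$, and $g(\cj X,\cj Y)<g(\cj X,\cj Z)$ for every $\cj Z\in\N_{\le d}\times\N_{\le e}$ with $\cj Z\ne\cj Y$. The outermost structure is therefore $\exists Y_2\,\forall \cj Z\,(\dots)$, i.e.\ an $\exists\forall$-prefix over the quantifiers that range over $Y_2$ and over the two coordinates $Z_1,Z_2$ of $\cj Z$. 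So the skeleton is $\exists Y_2\,\forall Z_1,Z_2$, and everything else must be folded into at most three further existential and four universal variables.

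First I would handle $g$. Since $g(\cj X,\cj Y)=\bigl|\,f_\alpha(X_2)-f_\alpha(X_1)-|f_\alpha(Y_2)-f_\beta(Y_1)|\,\bigr|$, and by the proof of Proposition~\ref{prop:qe_free} each term $\alpha X_i - g_\alpha(X_i)=f_\alpha(X_i)$ and $\beta Y_j - g_\beta(Y_j)=f_\beta(Y_j)$ can be handled by the $f_\alpha,g_\alpha$ machinery, the inequality $g(\cj X,\cj Y)<g(\cj X,\cj Z)$ becomes a Boolean combination of linear inequalities in the real quantities $\alpha X_i, \beta Y_j, \alpha Z_j, \beta \cdot(\text{integers})$ together with the integer ``rounding'' witnesses $U_1,U_2,V_1,V_2$ (and their analogues for $\cj Z$) coming from the defining relation $\alpha X - U\in I_\alpha$; but $U_i=g_\alpha(X_i)$ is a \emph{function} of $X_i$, hence uniquely determined, so it can be introduced by an extra universal or existential variable with no harm. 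The nested absolute values expand into a bounded Boolean combination of linear inequalities (each $|a|<|b|$ is $-b<a<b$ or $-a<b<a$ as in \eqref{eq:After}). The key point is that all of this is \emph{quantifier-free} once we have named the rounding witnesses, and each rounding witness $W$ for a term $\alpha T - W\in I_\alpha$ can be captured by a single $\forall$-clause ``$-1/\zeta_\alpha\le \alpha T - W<1-1/\zeta_\alpha$'' after moving it under the already-present $\forall$-block, since such a $W$ is unique; the same works under the $\exists$-block for $Y_2$'s own witness. That keeps the prefix at $\exists\forall$.

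Next I would assemble the whole formula: $Y_1\le d$ and $Y_2\le e$ are single quantifier-free inequalities; the clause $\cj Z\ne\cj Y$ is $Z_1\ne Y_1\vee Z_2\ne Y_2$; $Z_1\le d\wedge Z_2\le e$ guards the universal $\cj Z$; and the heart is the $g$-comparison just discussed. Counting: the $\exists$-block carries $Y_2$ and the (at most) two rounding witnesses $g_\beta(Y_1)$ and $g_\alpha(Y_2)$ (note $g_\alpha(X_1),g_\alpha(X_2)$ are functions of the \emph{free} variables $\cj X$ and so can also be existentially pinned down once), giving $\exists^5$; the $\forall$-block carries $Z_1,Z_2$ and the two rounding witnesses $g_\alpha(Z_1),g_\beta(Z_2)$, giving $\forall^4$. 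Hence $\Best$ is an $\exists^5\forall^4$ $(\alpha,\beta)$-PA formula. I would present this bookkeeping as a short explicit display of the formula and then verify the count against the claimed $\exists^5\forall^4$.

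The main obstacle I anticipate is the quantifier accounting around the rounding witnesses: naively each occurrence of $f_\alpha(\cdot)$ or $f_\beta(\cdot)$ introduces a ``$U$ such that $\alpha T-U\in I_\alpha$'' which, if placed carelessly, costs an extra alternation. The trick is that these $U$'s are \emph{single-valued}, so a witness introduced under an $\exists$ can equally be forced under a $\forall$ by the uniqueness clause ``$\forall U\,(\alpha T-U\in I_\alpha\to\dots)$'', and symmetrically; this lets me push all $Y$-side witnesses into the $\exists^5$ block and all $Z$-side witnesses into the $\forall^4$ block without creating a third alternation. A secondary, purely mechanical point is making sure the nested absolute values in $g$ expand to a \emph{quantifier-free} Boolean combination of linear inequalities over $\Z[\alpha,\beta]$ — this is routine, exactly as in \eqref{eq:After} and \eqref{eq:wtAfter}, since each $|a|<|b|$ is quantifier-free — but it must be spelled out to see the inequality count is finite and independent of $d,e,\cj X$.
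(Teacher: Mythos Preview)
Your proposal is correct and follows essentially the same route as the paper: the paper writes down exactly the formula you describe, with the five existential variables being $Y_2$ together with the four rounding witnesses $U_1,U_2,V_1,V_2$ for $X_1,X_2,Y_1,Y_2$, and the four universal variables being $Z_1,Z_2$ together with their rounding witnesses $W_1,W_2$, the latter handled via an implication exploiting uniqueness just as you suggest. Your discussion of the uniqueness trick for placing the $Z$-side witnesses under the $\forall$-block and the quantifier-free expansion of the nested absolute values is precisely the bookkeeping the paper leaves implicit.
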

\begin{proof}
Observe that $\Best(d,e,\cj X,Y_1)$ holds if and only if
\begin{equation*}
\aligned
\ex & Y_2, U_1, U_2, V_1, V_2  \quad \for Z_1, Z_2, W_1, W_2 \quad \bigg[ Y_1 \leq d \wedge Y_2 \leq e \wedge \\
& f_{\alpha}(X_1)=\alpha X_1 - U_1  \wedge f_{\alpha}(X_2)=\alpha X_2 - U_2 \wedge f_{\alpha}(Y_1)=\alpha Y_1 - V_1 \wedge f_{\beta}(Y_2)=\beta Y_2 - V_2 \wedge \\
& \Big[ \big( Z_1 \leq d \wedge Z_2 \leq e \wedge f_{\alpha}(Z_1) = \alpha Z_1 - W_1 \wedge f_{\beta}(Z_2)=\beta Z_2 - W_2\\
& \wedge (Z_1,Z_2)\neq (Y_1,Y_2) \big) \rightarrow \big|(\alpha X_2 - U_2) - (\alpha X_1 -U_1)-| (\beta Y_2 - V_2) - (\alpha Y_1 - V_1)|\big| \\
& \hspace{11.5em} < \big|(\alpha X_2 - U_2) - (\alpha X_1 -U_1)-| (\beta Z_2 - W_2) - (\alpha Z_1 - W_1)|\big| \Big]\bigg].
\endaligned
\end{equation*}
This implies the result.
\end{proof}

\noindent The following lemma is crucial in what follows. It essentially says that for every subinterval of $I_{\alpha}\cap I_{\beta}$ and every $d\in \Ost_{\alpha}$, we can recover $(\Ost_{\alpha})_{\leq d}$ just using parameters from this interval and $\Ost_{\beta}$. This should be compared to condition (ii) in \cite[Th.~A]{HT-Proj}.

\begin{lem}\label{lem:interior} Let $d\in \Ost_{\alpha}$, $e_0\in \Ost_{\beta}$, $\cj X \in \N^{2}$ and $s\in \N$ be such that
\begin{enumerate}
\item $f_{\alpha}(X_1), f_{\alpha}(X_2) \in I_{\beta}$,
\item $f_{\alpha}(X_1) < f_{\alpha}(X_2)$,
\item $s\leq d$.
\end{enumerate}
Then there is $e\in \Ost_{\beta}$ and an open interval $J\subseteq \big(f_{\alpha}(X_1),f_{\alpha}(X_2)\big)$ such that $e\geq e_0$ and for all $Z\in \N$
\[
f_{\alpha}(Z) \in J \Longrightarrow \Best (d,e,X_1,Z,s).
\]
\end{lem}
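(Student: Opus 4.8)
The plan is to unpack $\Best(d,e,X_1,Z,s)$ into an optimization statement and then build $J$ explicitly. Writing $\theta:=f_\alpha(Z)-f_\alpha(X_1)$ and $L:=f_\alpha(X_2)-f_\alpha(X_1)>0$, and recalling the definitions of $g$ and $\Best$, the relation $\Best(d,e,X_1,Z,s)$ holds exactly when some $Y_2\le e$ makes $(s,Y_2)$ the \emph{unique} minimizer over $(Z_1,Z_2)\in\N_{\le d}\times\N_{\le e}$ of $(Z_1,Z_2)\mapsto\bigl|\theta-|f_\alpha(Z_1)-f_\beta(Z_2)|\bigr|$. So I want to choose $e\ge e_0$ in $\Ost_\beta$, a witness $Z_2^*\le e$, set $\theta_0:=|f_\alpha(s)-f_\beta(Z_2^*)|$, and take $J$ to be a short open interval centred at $f_\alpha(X_1)+\theta_0$; for $f_\alpha(Z)\in J$ the target $\theta$ is then close to $\theta_0$, and I will show $(s,Z_2^*)$ strictly beats every competitor.

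The engine is a separation fact coming from the $\Q$-linear independence of $1,\alpha,\beta$, proved just like Lemma~\ref{lem:injectivity}: if $Z_1\ne Z_1'$ then $|f_\alpha(Z_1)-f_\beta(Z_2)|\ne|f_\alpha(Z_1')-f_\beta(Z_2')|$ for all $Z_2,Z_2'\in\N$, and for a fixed $Z_1$ the map $Z_2\mapsto|f_\alpha(Z_1)-f_\beta(Z_2)|$ is injective. Hence, once $e$ is fixed, the finite set $\mathcal V_s:=\{|f_\alpha(s)-f_\beta(Z_2)|:Z_2\le e\}$ is disjoint from $\mathcal V_{\ne}:=\{|f_\alpha(Z_1)-f_\beta(Z_2)|:Z_1\le d,\ Z_1\ne s,\ Z_2\le e\}$, and $\theta_0$ is attained in $\mathcal V_s$ only at $Z_2=Z_2^*$.

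Granting this, I would set $\eta:=\dist\bigl(\theta_0,(\mathcal V_s\setminus\{\theta_0\})\cup\mathcal V_{\ne}\bigr)$, which is positive since that set is finite and avoids $\theta_0$, and take $J$ of radius $r<\min\{\eta/2,\theta_0,L-\theta_0\}$ about $f_\alpha(X_1)+\theta_0$; the last two bounds ensure $J\subseteq(f_\alpha(X_1),f_\alpha(X_2))$. For $f_\alpha(Z)\in J$ we get $|\theta-\theta_0|<r$, so $g$ evaluated at $(s,Z_2^*)$ equals $|\theta-\theta_0|<r$, whereas for any other admissible $(Z_1,Z_2)$ the number $|f_\alpha(Z_1)-f_\beta(Z_2)|$ lies in $(\mathcal V_s\setminus\{\theta_0\})\cup\mathcal V_{\ne}$, so $g$ there is at least $\eta-|\theta-\theta_0|>r$. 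Thus $(s,Z_2^*)$ is the unique minimizer, that is $\Best(d,e,X_1,Z,s)$ holds, which is the conclusion.

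The genuinely delicate step is the first one: choosing $\theta_0$ inside the window $(0,L)$ while keeping it of the form $|f_\alpha(s)-f_\beta(Z_2^*)|$ with $Z_2^*\le e$ and $e\ge e_0$. Using the density of $f_\beta(\N)$ in $I_\beta$ (Fact~\ref{fact:f}), it suffices to locate $y\in I_\beta$ with $0<|f_\alpha(s)-y|<L$ and then take $Z_2^*$ with $f_\beta(Z_2^*)$ near $y$, enlarging $e$ beyond $e_0$ if needed. If $f_\alpha(s)\in I_\beta$ such a $y$ is immediate; in general one needs $\dist(f_\alpha(s),I_\beta)<L$, and this is exactly where hypotheses (1) and (2) — which force $[f_\alpha(X_1),f_\alpha(X_2)]\subseteq I_\alpha\cap I_\beta$ — together with the way Lemma~\ref{lem:interior} is invoked inside the construction of $\Member$ must be used carefully. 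A secondary subtlety is that $e$ must not be taken too large: although $\mathcal V_s$ and $\mathcal V_{\ne}$ stay disjoint for every $e$, they interleave ever more finely as $e$ grows and the gap $\eta$ degrades, so $e$ should be just large enough to realize $\theta_0$ in $(0,L)$ and no larger. Everything after that is the inequality bookkeeping in the previous paragraph.
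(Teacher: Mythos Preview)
Your approach is essentially identical to the paper's: choose $e\in\Ost_\beta$ large enough that some $w\le e$ gives $\theta_0:=|f_\alpha(s)-f_\beta(w)|\in(0,L)$, invoke Lemma~\ref{lem:injectivity} on the finite set $\N_{\le d}\times\N_{\le e}$ to extract a positive gap $\varepsilon$ between $\theta_0$ and every other value $|f_\alpha(w_1)-f_\beta(w_2)|$, and take $J$ to be the interval of radius $\varepsilon/2$ centered at $f_\alpha(X_1)+\theta_0$; the verification that $(s,w)$ is the unique minimizer is then the triangle-inequality computation you wrote.

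Two remarks. First, your ``secondary subtlety'' about keeping $e$ minimal is not a real constraint: the order of choices is to fix \emph{any} $e\ge e_0$ large enough to realize $\theta_0\in(0,L)$, and only \emph{then} compute $\eta$ for that fixed finite competitor set; $\eta>0$ regardless of how large $e$ is, so there is no circularity and no need to keep $e$ small. Second, the step you flag as ``genuinely delicate''---getting $\theta_0\in(0,L)$, which needs $f_\alpha(s)$ close to $I_\beta$---is handled by the paper exactly as you handle it: the paper simply writes ``Let $w\in\N_{\le e}$ be such that $|f_\alpha(s)-f_\beta(w)|<f_\alpha(X_2)-f_\alpha(X_1)$'' after noting density of $f_\beta(\N)$ in $I_\beta$, without isolating the issue further. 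So your proof is at the same level of completeness as the paper's on this point; you have not missed an idea that the paper supplies.
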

\begin{proof}
Let $e \in \Ost_{\beta}$ be large enough such that for every $w_1\in \N_{\leq d}$ there is $w_2 \in \N_{\leq e}$ such that
\[
f_{\alpha}(w_1) \in I_{\beta} \Longrightarrow |f_{\alpha}(w_1) - f_{\beta}(w_2)| < f_{\alpha}(X_2)-f_{\alpha}(X_1).
\]
The existence of such an $e$ follows from the finiteness of $\N_{\leq d}$ and the density of $f_{\beta}(\N)$ in $I_{\beta}$. Let $w\in \N_{\leq e}$ be such that
\[
|f_{\alpha}(s) - f_{\beta}(w)|  < f_{\alpha}(X_2)-f_{\alpha}(X_1).
\]
By Lemma \ref{lem:injectivity} we can find an $\varepsilon>0$ such that for all $(w_1,w_2)\in \N_{\leq d} \times \N_{\leq e}$ with $(w_1,w_2)\neq (s,w)$
\[
\big||f_{\alpha}(w_1) - f_{\beta}(w_2)|-|f_{\alpha}(s) - f_{\beta}(w)|\big| > \varepsilon.
\]
Set
\[
\delta \coloneqq  f_{\alpha}(X_1) + |f_{\alpha}(s) - f_{\beta}(w)|.
\]
Set $J \coloneqq  (\delta - \frac{\varepsilon}{2}, \delta + \frac{\varepsilon}{2})$. Let $Z\in \N$ be such that $f_{\alpha}(Z) \in J$. It is left to show that $\Best(d,e,X_1,Z,s)$ holds. We have that for all $(w_1,w_2)\in \N_{\leq d} \times \N_{\leq e}$ with $(w_1,w_2)\neq (s,w)$
\begin{align*}
g(X_1,Z,w_1,w_2) &= \big| f_{\alpha}(Z) - f_{\alpha}(X_1) - |f_{\alpha}(w_1) - f_{\beta}(w_2)|\big| \\
&= \big| f_{\alpha}(Z) - \delta + |f_{\alpha}(s) - f_{\beta}(w)| - |f_{\alpha}(w_1) - f_{\beta}(w_2)|\big|\\
&\geq \Big| |f_{\alpha}(Z) - \delta| - \big||f_{\alpha}(s) - f_{\beta}(w)| - |f_{\alpha}(w_1) - f_{\beta}(w_2)|\big|\Big| > \frac{\varepsilon}{2}.
\end{align*}
Moreover,
$$
g(X_1,Z,s,w) \, = \, \big| f_{\alpha}(Z) - f_{\alpha}(X_1) - |f_{\alpha}(s) - f_{\beta}(w)| \big|
\, \leq \, \big| f_{\alpha}(Z) - \delta \big|\leq \frac{\varepsilon}{2}\ts.
$$
Thus $\Best(d,e,X_1,Z,s)$ holds, as desired.
\end{proof}

\begin{lem}\label{lem:density} Let $d\in \Ost_{\alpha},s\in \N, \cj X \in \N^2$ be such that
\begin{enumerate}
\item $f_{\alpha}(X_1), f_{\alpha}(X_2) \in I_{\beta}$,
\item $f_{\alpha}(X_1) < f_{\alpha}(X_2)$,
\item $s \leq d$.
\end{enumerate}
Then there are $e_1 \in \Ost_{\beta},e_2\in \Ost_{\alpha},Y\in \N$ such that
\begin{enumerate}[align=left]
\item[(i)] $f_{\alpha}(X_1) < f_{\alpha}(Y) < f_{\alpha}(X_2)$,
\item[(ii)] $d < e_1 < e_2$
\item[(iii)] for all $Z\in \N$
\[
\restr{Z}{e_2}{\alpha}=Y \Longrightarrow \Best (d,e_1,X_1,Z,s).
\]
\end{enumerate}
\end{lem}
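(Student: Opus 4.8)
The plan is to build $Y$, $e_1$ and $e_2$ by stacking Lemma~\ref{lem:interior}, Fact~\ref{fact:ostlocal2}, and the density of $f_\alpha(\N)$ in $I_\alpha$ from Fact~\ref{fact:f}; essentially all the work is in ordering the truncation thresholds correctly.

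\textbf{Step 1 (fixing $e_1$ and $J$).} Since $\Ost_\beta=\{q'_n:n\in\N\}$ is unbounded, pick $e_0\in\Ost_\beta$ with $e_0>d$. Hypotheses (1)--(3) of the present lemma coincide with hypotheses (1)--(3) of Lemma~\ref{lem:interior}, so that lemma, applied to $d,e_0,\cj X,s$, gives $e_1:=e\in\Ost_\beta$ with $e_1\ge e_0>d$ and a nonempty open interval $J\subseteq\bigl(f_\alpha(X_1),f_\alpha(X_2)\bigr)$ such that $f_\alpha(Z)\in J$ implies $\Best(d,e_1,X_1,Z,s)$ for every $Z\in\N$. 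This already gives the inequality $d<e_1$ in~(ii).

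\textbf{Step 2 (fixing $Y$ and $e_2$).} Since $I_\alpha$ is an interval containing $f_\alpha(X_1)$ and $f_\alpha(X_2)$, it contains the open interval between them, so $J$ is a nonempty open subinterval of $I_\alpha$; by Fact~\ref{fact:f} we may pick $Y\in\N$ with $f_\alpha(Y)\in J$, and then $f_\alpha(X_1)<f_\alpha(Y)<f_\alpha(X_2)$, i.e.~(i) holds. Applying Fact~\ref{fact:ostlocal2} to $Y$ and the open interval $J$ (which is an open interval around $f_\alpha(Y)$) produces $d'\in\Ost_\alpha$, which we may take to exceed every convergent in the $\alpha$-Ostrowski representation of $Y$, such that $\restr{Z}{d'}{\alpha}=Y$ implies $f_\alpha(Z)\in J$ for all $Z\in\N$. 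Now choose $e_2\in\Ost_\alpha$ with $e_2\ge d'$ and $e_2>e_1$; this completes~(ii). For~(iii), suppose $\restr{Z}{e_2}{\alpha}=Y$ (the case $Y=0$ being trivial, assume $Y\neq0$ and let $q_m$ be the largest element of $\Ost_\alpha(Y)$, so $q_m\le d'\le e_2$): then the Ostrowski digits of $Z$ at convergents strictly above $q_m$ and at most $e_2$ all vanish, hence $\restr{Z}{d'}{\alpha}=Y$ as well, so $f_\alpha(Z)\in J$ by the choice of $d'$, and therefore $\Best(d,e_1,X_1,Z,s)$ by Step~1.

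This establishes (i)--(iii). The argument is little more than bookkeeping on top of Lemma~\ref{lem:interior} and Fact~\ref{fact:ostlocal2}; the only point needing attention — the closest thing to an obstacle — is the interlocking of the four thresholds: $e_1\in\Ost_\beta$ must be pinned above $d$ \emph{before} density is invoked to produce $Y$, and $e_2\in\Ost_\alpha$ must be pinned above both $d'$ and $e_1$, while the passage from $\restr{Z}{e_2}{\alpha}=Y$ to $\restr{Z}{d'}{\alpha}=Y$ uses only that $d'$ dominates the support of $Y$'s Ostrowski representation. No new idea beyond those two results is needed.
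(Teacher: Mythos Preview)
Your proof is correct and follows essentially the same route as the paper's: invoke Lemma~\ref{lem:interior} (with a lower bound $e_0>d$ fed in) to obtain $e_1$ and the interval $J$, pick $Y$ with $f_\alpha(Y)\in J$ by density, and then use Fact~\ref{fact:ostlocal2} to obtain a truncation level large enough that any $Z$ agreeing with $Y$ up to that level lands in~$J$. The only cosmetic difference is that the paper takes $e_2$ directly from Fact~\ref{fact:ostlocal2} (noting it can be chosen arbitrarily large, hence $>e_1$), whereas you introduce an intermediate threshold $d'$ and then argue separately that $\restr{Z}{e_2}{\alpha}=Y$ implies $\restr{Z}{d'}{\alpha}=Y$; both are fine.
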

\begin{proof}
By Lemma \ref{lem:interior} there is an open interval $J\subseteq \big(f_{\alpha}(X_1),f_{\alpha}(X_2)\big)$ and $e_1 \in \Ost_{\beta}$ such that $e_1>d$ and for all $Z\in \N$
\[
f_{\alpha}(Z) \in J \Longrightarrow \Best (d,e_1,X_1,Z,s).
\]
Take $Y\in \N$ such that $f_{\alpha}(Y) \in J$. By Fact \ref{fact:ostlocal2} we can find $e_2\in \Ost_{\alpha}$ arbitrarily large such that $f_{\alpha}(Z) \in J$ for all $Z\in \N$ with $\restr{Z}{e_2}{\alpha}=Y$. The statement of the Lemma follows.
\end{proof}

\begin{defi}\label{def:Adm_Mem}
  Define $\Adm$ to be the relation on $\Ost_{\alpha}^4 \times \Ost_{\beta}^2 \times \N^6$ that holds precisely for all tuples
\[
(d_1,d_2,d_3,d_4,e_1,e_2,X_1,X_2,X_3,X_4,s,t) \in  \Ost_{\alpha}^4 \times \Ost_{\beta}^2 \times \N^6
\]
such that
\begin{itemize}
\item $d_1, d_2,d_3$ are consecutive elements of $\Ost_{\alpha}(X_1)$,
\item $d_4\in \Ost_{\alpha}(X_3)$ and $d_1\leq d_4 < d_2$,
\item $e_1, e_2 \in \Ost_{\beta}(X_2)$ and $d_1 \leq e_1 < d_2 \leq e_2 < d_3$
\item $\Best (d_1,e_1,\restr{X_4}{d_1}{\alpha},X_4,s)$
\item $\Best (d_2,e_2,\restr{X_4}{d_2}{\alpha},X_4,t)$
\end{itemize}

\noindent Define $\Member$ to be the $6$-ary relation on $\N$ that exactly for all tuples $(X_1,X_2,X_3,X_4,s,t)\in \N^6$ such that there exist $d_1,d_2,d_3,d_4 \in \Ost_{\alpha},\, e_1,e_2\in \Ost_{\beta}$ with
\[
\Adm(d_1,d_2,d_3,d_4,e_1,e_2,X_1,X_2,X_3,X_4,s,t).
\]
\end{defi}

\begin{thm}\label{thm:codepairs} Let $S \subseteq \N^2$ be finite. Then there are $X_1,X_2,X_3,X_4\in \N$ such that for all $s,t\in \N$
\[
(s,t) \in S \Leftrightarrow \Member(X_1,X_2,X_3,X_4,s,t).
\]
\end{thm}
\begin{proof}
Let $S \subseteq \N^2$ be finite. Let $c_1,\dots,c_{2n} \in \N$ be such that
\[
S = \{(c_1,c_2),\dots,(c_{2n-1},c_{2n})\}.
\]
Recall that the convergents of $\alpha$ and $\beta$ are $\{p_{n}/q_{n}\}$ and $\{p'_{n}/q'_{n}\}$, respectively.
We will construct two strictly increasing sequences $(k_i)_{i=0,\dots,2n}, (l_i)_{i=1,\dots,2n}$ of non-consecutive natural numbers and another sequence $(W_{i})_{i=0,...,2n}$ of natural numbers such that for all $i=0,\dots,2n$
\begin{enumerate}
\item $W_{j}=\restr{W_{i}}{q_{k_j}}{\alpha}$ for all $j\leq i$, and $f_{\alpha}(W_i) \in I_{\beta}$,
\item $q_{k_i} > \max \{c_1,\dots, c_{2n}\}$,
\end{enumerate}
and furthermore if $i\geq 1$, then
\begin{enumerate}
\item[(3)] $q_{k_{i-1}} < q'_{l_i} < q_{k_{i}}$,
\item[(4)] for all $Z \in \N$
\[
\restr{Z}{q_{k_i}}{\alpha}=W_{i} \Longrightarrow \Best (q_{k_{i-1}},q'_{l_{i}},W_{i-1},Z,c_i).
\]
\end{enumerate}

\noindent We construct these sequences recursively. For $i=0$, pick $k_0\in \N$ such that
\[
q_{k_0} > \max \{c_1,\dots, c_{2n}\}.
\]
Pick $W_0\in \N$ such that $W_0=\restr{W_0}{q_{k_0}}{\alpha}$ and $f_{\alpha}(W_0) \in I_{\beta}$. Now suppose that $i>0$ and that we already constructed $k_0,k_1,\dots,k_{i-1}$, $l_1,\dots,l_{i-1}$ and $W_1,\dots, W_{i-1}$ such that the above conditions (1)-(4) hold for $j=1,\dots,i-1$. We now have to find $k_i,l_i$ and $W_i$ that (1)-(4) also hold for $i$. We do so by applying Lemma \ref{lem:density}. By Fact \ref{fact:ostlocal} we can take $T\in \N$ such that
\begin{itemize}[align=left]
\item[(a)] $f_{\alpha}(T) > f_{\alpha}(W_{i-1})$, $\restr{T}{q_{k_{i-1}}}{\alpha}= W_{i-1}$, $f_{\alpha}(T) \in I_{\beta}$ and
\item[(b)] for all $Z\in \N$, $\big(f_{\alpha}(W_{i-1}) < f_{\alpha}(Z) < f_{\alpha}(T) \Longrightarrow \restr{Z}{q_{k_{i-1}}}{\alpha}=W_{i-1}\big)$.
\end{itemize}
We now apply  Lemma \ref{lem:density} with $X_1\coloneqq W_{i-1}, X_2\coloneqq  T, d \coloneqq  q_{k_{i-1}}$ and $s \coloneqq  c_{i-1}$.
We obtain $e_1\in \Ost_{\beta}$, $e_2 \in \Ost_{\alpha}$ and $Y \in \N$ such that $d< e_1<e_2$, $f_{\alpha}(W_{i-1}) < f_{\alpha}(Y) < f_{\alpha}(T)$ and for all $Z\in \N$
\[
\restr{Z}{e_2}{\alpha}=Y \Longrightarrow \Best(q_{k_{i-1}},e_1,W_{i-1},Z,c_{i-1}).
\]
If necessary, we increase $e_2$ such that $\restr{Y}{e_2}{\alpha}=Y$.
Choose $k_i$ such that $q_{k_i} = e_2$, choose $l_i$ such that $q'_{l_i}=e_1$. Set $W_i \coloneqq  Y$. It is immediate that (2)-(4) hold for $i=1,\dots,n$. For (1), observe that since $f_{\alpha}(W_{i-1}) < f_{\alpha}(Y) < f_{\alpha}(T)$, we deduce from (b) that
\[
\restr{W_{i}}{q_{k_{i-1}}}{\alpha} = \restr{Y}{q_{k_{i-1}}}{\alpha} = W_{i-1}.
\]
Since (1) holds for $i-1$, we get that for $j=1,\dots,i-2$
\[
\restr{W_{i}}{q_{k_{j}}}{\alpha} = \restr{W_{i-1}}{q_{k_{j}}}{\alpha} = W_j.
\]
Thus (1) holds for $i$.

\medskip

\noindent We have constructed $(k_i)_{i=0,\dots,2n}, (l_i)_{i=1,\dots,2n}$ and  $(W_{i})_{i=0,...,2n}$ satisfying (1)-(4) for each $i=0,1,\dots,2n$. We now define $(Z_1,Z_2,Z_3,Z_4)\in \N^4$ by
$$
Z_1 \coloneqq  \sum_{i=0}^{2n} \. q_{k_i}\,, \ \quad Z_2 \coloneqq  \sum_{i=1}^{2n} \. q'_{l_i}\,, \ \quad  
Z_3 \coloneqq  \sum_{i=0}^n \. q_{k_{2i}}\,, \quad \text{and} \quad Z_4 \coloneqq  W_{2n}\..
$$
Observe that we require the sequences $(k_i)_{i=0,\dots,2n}$ and $(l_i)_{i=1,\dots,2n}$ to be increasing sequences of non-consecutive natural numbers. Therefore the above description of $Z_1,Z_2$ and $Z_3$ immediately gives us the $\alpha$-Ostrowski representations of $Z_1$ and $Z_3$ and the $\beta$-Ostrowski representation of $Z_2$. In particular,
\begin{align}\label{eq:proofadm0}
\begin{split}
\Ost_{\alpha}(Z_1) \. = \. & \bigl\{ q_{k_i} \ : \ i=0,\dots, n\bigr\}, \qquad 
\Ost_{\beta}(Z_2) \. = \. \bigl\{ q'_{l_i} \ : \ i=1,\dots,n\bigr\},\\
&\text{and} \quad \Ost_{\alpha}(Z_3) \. = \. \bigl\{ q_{k_i} \ : \ i=0,\dots, n, \ i \hbox{ even}\bigr\}.
\end{split}
\end{align}
It is now left to prove that for all $s,t\in \N$
\[
(s,t) \in S \Longleftrightarrow \Member(Z_1,Z_2,Z_3,Z_4,s,t).
\]
``$\Rightarrow$'': Let $(s,t) \in S$. Let $i\in \{1,\dots,2n\}$ be such that $(s,t) = (c_i,c_{i+1})$. Observe that $i$ is odd.  We show that
\begin{equation}\label{eq:proofadm}
\Adm(q_{k_{i-1}},q_{k_{i}},q_{k_{i+1}},q_{k_{i-1}},q_{l_i},q_{l_{i+1}},Z_1,Z_2,Z_3,Z_4,c_i,c_{i+1})
\end{equation}
holds. By \eqref{eq:proofadm0} and the fact that $i-1$ is even, we have that
\[
q_{k_{i-1}},q_{k_{i}},q_{k_{i+1}}\in \Ost_{\alpha}(Z_1), \ q'_{l_i},q'_{l_{i+1}} \in \Ost_{\beta}(Z_2), \ q_{k_{i-1}} \in \Ost_{\alpha}(Z_3).
\]
Trivially, $q_{k_{i-1}}\leq q_{k_{i-1}} < q_{k_i}$. By (3) $q_{k_{i-1}}< q'_{l_{i}} < q_{k_i}< q'_{l_{i+1}} < q_{k_{i+1}}.$ Now observe that by (1) we have
\begin{align*}
\restr{Z_4}{q_{k_{i-1}}}{\alpha} &= \restr{W_{2n}}{q_{k_{i-1}}}{\alpha} = W_{i-1},\\
\restr{Z_4}{q_{k_{i}}}{\alpha} &=  \restr{W_{2n}}{q_{k_{i}}}{\alpha} = W_{i},\\
\restr{Z_4}{q_{k_{i+1}}}{\alpha} &=  \restr{W_{2n}}{q_{k_{i+1}}}{\alpha} = W_{i+1}.
\end{align*}
Thus by (4)
\[
\Best (q_{k_{i-1}},q'_{l_{i}},\restr{Z_4}{q_{k_{i-1}}}{\alpha},Z_4,c_i) \wedge \Best (q_{k_{i}},q'_{l_{i+1}},\restr{Z_4}{q_{k_{i}}}{\alpha} ,Z_4,c_{i+1}).
\]
Thus \eqref{eq:proofadm} holds.

\medskip

``$\Leftarrow$'': Suppose that $\Member(Z_1,Z_2,Z_3,Z_4,s,t)$ holds. Let $d_1,d_2,d_3,d_4 \in \Ost_{\alpha},e_1,e_2\in \Ost_{\beta}$ be such that
\begin{equation}\label{eq:proofadm1}
\Adm(d_1,d_2,d_3,d_4,e_1,e_2,Z_1,Z_2,Z_3,Z_4,s,t)
\end{equation}
holds. Then $d_1,d_2,d_3$ are consecutive elements of $\Ost_{\alpha}(Z_1)$. Thus there is $i\in \{1,\dots,2n-1\}$ such that
\[
d_1 \coloneqq  q_{k_{i-1}}, \ d_2 \coloneqq  q_{k_{i}}, \ d_3 \coloneqq  q_{k_{i+1}}.
\]
Since $d_4 \in \Ost_{\alpha}(Z_3)$ and $d_1 \leq d_4 < d_2$, it follows that $d_4 = d_1 = q_{k_{i-1}}$ and that $i$ is odd. Since $e_1,e_2 \in \Ost_{\beta}(Z_2)$ and
\[
d_1=q_{k_{i-1}} \leq e_1 < d_2= q_{k_{i}} \leq e_2 \leq d_3 = q_{k_{i+1}},
\]
we get from (3) that $e_1 = q'_{ l_i}$ and $e_2 = q'_{l_{i+1}}$. Thus by \eqref{eq:proofadm1}
\[
\Best (q_{k_{i-1}},q'_{l_{i}},\restr{Z_4}{q_{k_{i-1}}}{\alpha} ,Z_4,s) \wedge \Best (q_{k_{i}},q'_{l_{i+1}},\restr{Z_4}{q_{k_{i}}}{\alpha} ,Z_4,t).
\]
By (4) we get that $s=c_{i}$ and $t=c_{i+1}$. Since $i$ is odd, $(s,t)=(c_{i},c_{i+1}) \in S$.
\end{proof}

\begin{lem}\label{lem:ea} $\Adm$ and $\Member$ are definable by $\exists^{*}\forall^{*}$ $\Q(\alpha,\beta)$-PA formulas.
\end{lem}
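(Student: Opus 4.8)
The plan is to unwind Definition~\ref{def:Adm_Mem} clause by clause and check that each ingredient, and hence the whole relation, can be put in the form $\exists^{*}(\theta\wedge\forall^{*}\psi)$ with $\theta,\psi$ quantifier-free. A finite conjunction of formulas of this shape is again of this shape (move all leading existential blocks to the front and all trailing universal blocks to the back; note also that $\forall\vec y\,P\vee\forall\vec w\,Q\equiv\forall\vec y\,\forall\vec w\,(P\vee Q)$ when the variable tuples are disjoint, so finite disjunctions behave as well), and $\Member$ only prepends one more existential block to $\Adm$, so it suffices to verify this for each conjunct of $\Adm$.

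The raw material is already in place: by Lemma~\ref{lem:Best}, $\Best$ is $\exists^{5}\forall^{4}$; the relation $Y=\restr{X}{d}{\alpha}$ is defined by the $\exists^{2}$ formula displayed right after~\eqref{eq:restr}, once the consecutive convergent pair $(u,v),(u^{+},v^{+})$ with $v=d$ is supplied as parameters; and by~\eqref{eq:In_ex}, $v\in\Ost_{\alpha}(X)$ is purely existential once the two consecutive convergents at $v$ are supplied. Accordingly, in the defining formula I will existentially guess, alongside each of $d_{1},\dots,d_{4}$ (and likewise $e_{1},e_{2}$), its numerator and its neighbouring convergent pairs $(u_{i}^{-},d_{i}^{-}),(u_{i},d_{i}),(u_{i}^{+},d_{i}^{+})$, pinning them down with $\Conv(u_{i}^{-},d_{i}^{-},u_{i},d_{i})\wedge\Conv(u_{i},d_{i},u_{i}^{+},d_{i}^{+})$ (the $\beta$-version of $\Conv$ for the $e_{j}$'s). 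Each such $\Conv$ is a $\forall^{2}$-formula and will be swept into the final universal block; with the neighbours available, every occurrence in Definition~\ref{def:Adm_Mem} of $\restr{\cdot}{\cdot}{\alpha}$ and of the membership relations $\in\Ost_{\alpha}(\cdot)$, $\in\Ost_{\beta}(\cdot)$ becomes purely existential. (The finitely many boundary denominators $q_{0}=1$, etc., for which $\Conv$ requires $v>1$ and so does not apply, are dispatched by a bounded number of explicit quantifier-free disjuncts.)

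The one clause that genuinely needs reshaping is ``$d_{1},d_{2},d_{3}$ are consecutive elements of $\Ost_{\alpha}(X_{1})$''. The naive rendering --- ``$d_{i}\in\Ost_{\alpha}(X_{1})$ and no convergent strictly between $d_{i}$ and $d_{i+1}$ lies in $\Ost_{\alpha}(X_{1})$'' --- is bad, because ``for every convergent $w$ in $(d_{i},d_{i+1})$'' forces, for each such $w$, first guessing $w$'s companion convergents, which turns the subformula into a $\forall\exists$ pattern. Instead I use the equivalence: $d_{1}<d_{2}$ are consecutive in $\Ost_{\alpha}(X_{1})$ iff $d_{1},d_{2}\in\Ost_{\alpha}(X_{1})$ and $\restr{X_{1}}{d_{2}^{-}}{\alpha}=\restr{X_{1}}{d_{1}}{\alpha}$, where $d_{2}^{-}$ is the convergent immediately below $d_{2}$ (already guessed above). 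Indeed $\restr{X_{1}}{d_{2}^{-}}{\alpha}-\restr{X_{1}}{d_{1}}{\alpha}=\sum_{d_{1}<q_{n}<d_{2}}b_{n+1}q_{n}$ has all terms nonnegative, so it vanishes exactly when no element of $\Ost_{\alpha}(X_{1})$ lies strictly between $d_{1}$ and $d_{2}$. This is a conjunction of $\exists^{2}$-conditions; doing the same for $d_{2},d_{3}$ handles the clause.

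Assembling $\Adm$ is then bookkeeping: conjoin the guessed-convergent block (existential, plus the $\forall^{2}$ $\Conv$'s), the reformulated consecutiveness conditions (existential), the membership and ordering conditions on $d_{4}$ and $e_{1},e_{2}$ (existential plus $\Conv$'s plus quantifier-free), and the two $\Best$-conditions, each obtained by existentially introducing $W=\restr{X_{4}}{d_{1}}{\alpha}$ (resp. $\restr{X_{4}}{d_{2}}{\alpha}$) and invoking the $\exists^{5}\forall^{4}$ formula of Lemma~\ref{lem:Best}. Pulling all existential quantifiers to the front and all universal quantifiers (from the $\Conv$'s and the $\Best$'s) to the back shows $\Adm$ is $\exists^{*}\forall^{*}$, and prefixing $\exists d_{1}\cdots\exists e_{2}$ together with their auxiliary data gives the same for $\Member$. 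The main obstacle is precisely the consecutiveness clause: once the universally quantified ``gap'' condition is replaced by the predecessor-convergent identity above, everything else falls into line.
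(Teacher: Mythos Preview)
Your proof is correct and follows essentially the same strategy as the paper: replace each $d_i,e_j$ by a tuple of consecutive convergents pinned down by $\Conv$ (a $\forall^2$ condition), use the purely existential definitions of $\in\Ost_{\alpha}(\cdot)$ and $\restr{\cdot}{\cdot}{\alpha}$ once those neighbours are available, and invoke the $\exists^{5}\forall^{4}$ form of $\Best$. The only cosmetic difference is in the consecutiveness clause: you use the identity $\restr{X_1}{d_2^-}{\alpha}=\restr{X_1}{d_1}{\alpha}$, while the paper applies $\After$ to the difference $\restr{X_1}{d_2}{\alpha}-\restr{X_1}{d_1}{\alpha}$; both reduce the clause to an existential formula in the same way.
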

\begin{proof}
For $\Adm$ (Definition~\ref{def:Adm_Mem}), we replace each variable $d_{i}$, which earlier represented some convergent $q_{n} \in \Ost_{\alpha}$, by a $6$-tuple
$\cj d_{i} = (u^{-}_{i},v^{-}_{i},u_{i},v_{i},u^{+}_{i},v^{+}_{i})$ such that:
\begin{equation}\label{eq:6-tuple}
(u^{-}_{i},v^{-}_{i},u_{i},v_{i},u^{+}_{i},v^{+}_{i}) = (p_{n-1},q_{n-1},p_{n},q_{n},p_{n+1},q_{n+1}) \;\; \text{for some} \; n.
\end{equation}
We require that $\C_{\forall,\alpha}(u^{-}_{i},v^{-}_{i},u_{i},v_{i},u^{+}_{i},v^{+}_{i})$ holds, in order to guarantee~\eqref{eq:6-tuple}.
Here $v_{i}$ takes the earlier role of $d_{i}$.
Similarly, we replace each $e_{i}$ in $\Adm$ by a $6$-tuple $\cj e_{i}$ and also require that $\C_{\forall,\beta}(\cj e_{i})$ holds.
Here $\C_{\forall,\alpha}$ and $\C_{\forall,\beta}$ are from~\eqref{eq:multi_consec}, with the extra subscript $\alpha$ or $\beta$ indicating which irrational is being considered.
These $\C_{\forall,\alpha}$ and $\C_{\forall,\beta}$ conditions can be combined into a $\for^{2}$-part.
Altogether, the new $\Adm$ has $42$ variables.

\medskip

\nin
Recall that $\Best$ is $\exists^{5} \forall^{4}$-definable (Lemma~\ref{lem:Best}).
The relation $Y = \restr{X}{\d}{\alpha}$ from~\eqref{eq:restr} is $\exists^{2}$-definable.
Here $\Comp$ is from~\eqref{eq:Compatible}.

\medskip

\nin
The relation $\cj d \in \Ost_{\alpha}(X)$, meaning $v$ appears in $\Ost_{\alpha}(X)$, is $\ex^{3}$-definable (see~\eqref{eq:In_ex}).
The same holds for $\cj e \in \Ost_{\beta}(X)$ (just replace $\alpha$ by $\beta$).

\medskip

\nin
The relation
$$
\gathered
\Cons_{\ex}(\cj d_{1},\cj d_{2},X) \; \coloneqq \; v_{1} < v_{2} \wedge \cj d_{1} \in \Ost_{\alpha}(X) \wedge \cj d_{2} \in \Ost_{\alpha}(X) \wedge \\
\ex Y_{1},Y_{2} \;\; Y_{1} = \restr{X}{\cj d_{1}}{\alpha} \; \land \; Y_{2} = \restr{X}{\cj d_{2}}{\alpha} \wedge \hyperref[eq:After]{\After}(u^{-}_{2},v^{-}_{2},u_{i},v_{i},Y_{2}-Y_{1})
\endgathered
$$
means  $v_{1} < v_{2}$  appear consecutively in $\Ost_{\alpha}(X)$.
This is $\ex^{12}$-definable.


\medskip

\nin
It is now easy to see that $\Adm$ is $\exists^{*}\forall^{*}$-definable, and so is $\Member$.
A direct count reveals that $\Adm$ is at most $\ex^{50}\for^{10}$, and $\Member$ is at most $\ex^{100}\for^{10}$.
\end{proof}

\subsection{Proof of Theorem \ref{thm:undtwo}}
  Here we follow an argument given in the proof of Thomas \cite[Th.~16.5]{Thomas}.
  Consider $U = (\States, \Alph, \sigma_1, \delta, q_{1},q_{2})$ a universal $1$-tape Turing machine with $8$ states and $4$ symbols, as given in~\cite{NW}.
  Here $\States = \{q_{1},\dots,q_{8}\}$ are the states, $\Alph = \{\sigma_{1},\dots,\sigma_{4}\}$ are the tape symbols, $\sigma_{1}$ is the blank symbol,
  $q_{1}$ is the start state and $q_{2}$ is the unique halt state.
  Also, $\delta : [8] \times [4] \to [8] \times [4] \times \{\pm 1\}$ is the transition function.
  In other words, we have $\delta(i,j) = (i',j',d)$ if upon state $q_{i}$ and symbol $\sigma_{j}$, the machine changes to state $q_{i'}$, writes symbol $\sigma_{j'}$ and moves left ($d=-1$) or right ($d=1$).
  Given an input $x \in \Alph^{*}$, we will now produce an $\exists^{*} \forall^{*} \exists^{*} \forall^{*}$ $\alpha$-PA sentence $\varphi_{x}$ such that $\varphi_{x}$ holds if and only if $U(x)$ halts.

\medskip


\nin  We will now use sets $A_1,\dots,A_8 \subseteq \N^2$ and $B_1,\dots B_4 \subseteq \N^2$ to code the computation on $U(x)$.
  The $A_i$'s code the current state of the Turing machine.
  That is, for $(s,t) \in \N^{2}$, we have $(s,t) \in A_i$ if and only if at step $s$ of the computation, $U$ is in state $q_{i}$ and its head is over the $t$-th cell of the tape.
  The $B_j$'s code which symbols are written on the tape at a given step of the computation.
  We have $(s,t) \in B_j$ if and only if at step $s$ of the computation, the symbol $\sigma_{j}$ is written on $t$-th cell of the tape.  The computation $U(x)$ then halts if and only if there are $A_1,\dots,A_8 \subseteq \N^2$ and $B_1,\dots B_4 \subseteq \N^2$ such that:
\begin{enumerate}
\item[a)] $A_{i}$'s are pairwise disjoint; $B_{j}$'s are pairwise disjoint.
\item[b)] $(0,0) \in A_1$, i.e., the computation starts in the initial state.
\item[c)] There exists some $(u,v) \in A_2$, i.e., the computation eventually halts.
\item[d)] For each $s \in \N$, there is at most one $t \in \N$ such that $(s,t) \in \cup_{i} A_i$, i.e., at each step of the computation, $U$ can only be in exactly one state.
\item[e)] If $x = x_{0} \dots x_{n} \in \Alph^{*}$, then for every $0 \le t \le n$, we have $x_{t} = \sigma_{j} \iff (0,t) \in B_{j}$, i.e., the first rows of the $B_j$'s code the input string $x$.
\item[f)] Whenever $(s,t) \in B_j$,
\begin{enumerate}
  \item[f1)] if  $(s,t)\notin A_i$ for all $i \in [8]$, then $(s+1,t)\in B_{j}$. That is, if the current head position is not at $t$, then the $t$-th symbol does not change.
  \item[f2)] if  $(s,t)\in A_i$ for some $i \in [8]$ and $\delta(i,j) = (\delta^{1}_{ij},\delta^{2}_{ij},\delta^{3}_{ij}) \in [8] \times [4] \times \{\pm 1\}$, then $(s+1,t)\in B_{\delta^{2}_{ij}}$ and $(s+1,t + \delta^{3}_{ij}) \in A_{\delta^{1}_{ij}}$. That is, if the head position is at $t$, and the state is $i$, then a transition rule is applied.
\end{enumerate}
\end{enumerate}

\nin We use the predicate $\Member$ to code membership $(s,t) \in A_{i}, B_{j}$.
By Theorem~\ref{thm:codepairs}, there should exist  tuples $\bm{X}_{i} = (X_{i1},\dots,X_{i4}),\, \bm{Y}_{j} = (Y_{j1},\dots,Y_{j4}) \in \N^{4}$ that represent $A_{i}$ and $B_{j}$.
In other words, we have $$(s,t) \in A_{i} \iff \Member(\bm{X}_{i},s,t) \;,\quad (s,t) \in B_{j} \iff \Member(\bm{Y}_{j},s,t).$$
For the input condition $e)$, there exist $\bm{Z}_{j} = (Z_{j1},\dots,Z_{j4}) \in \N^{4}$ so that
$$x_{t}=\sigma_{j} \iff \Member(\bm{Z}_{j},0,t) \quad \forall \; 0 \le t \le n.$$
Note that $\bm{Z}_{j}$ can be explicitly constructed from the input $x$ (see~Theorem~\ref{thm:codepairs}'s proof).
Now the sentence $\phi_{x}$ that encodes halting of $U(x)$ is:
\allowdisplaybreaks
\begin{align*}
  \varphi_x \; \coloneqq \;
   & \exists \bm{X}_1,  \dots , \bm{X}_8,\,  \bm{Y}_1,\dots, \bm{Y}_4 \in \N^4, \, u,v \in \N \quad \forall s,t,t' \in \N  \\
& \bigwedge_{i \neq i'} \lnot \big( \Member(\bm{X}_{i},s,t) \wedge \Member(\bm{X}_{i'},s,t) \big)\\
\wedge & \bigwedge_{j \neq j'} \lnot \big( \Member(\bm{Y}_{j},s,t) \wedge \Member(\bm{Y}_{j'},s,t) \big)\\
\wedge & \Member(\bm{X}_{1},0,0) \; \wedge \; \Member(\bm{X}_{2},u,v)\\
\wedge &  \Big[ \big( \bigvee_{i} \Member(\bm{X}_i,s,t) \big) \wedge  \big( \bigvee_{i} \Member(\bm{X}_i,s,t') \big) \; \to \; t = t' \Big]\\
\wedge &  \bigwedge_{j} \big( \Member(\bm{Z}_{j},0,t) \to \Member(\bm{Y}_{j},0,t) \big)\\
\wedge &  \bigwedge_{j} \Big( \; \Member(\bm{Y}_{j},s,t)  \to \big[ \, \bigwedge_{i} \lnot \Member(\bm{X}_i,s,t) \wedge \Member(\bm{Y}_{j},s+1,t) \,\big]\\
\vee \bigvee_{i} \big[ \, & \Member(\bm{X}_i,s,t) \wedge \Member(\bm{Y}_{\delta^{2}_{ij}},s+1,t) \wedge \Member(\bm{X}_{\delta^{1}_{ij}},s+1,t+\delta^{3}_{ij})\, \big] \; \Big).\\
\end{align*}
Since $\Member$ is $\ex^{*}\for^{*}$-definable, the sentence $\phi_{x}$ is $\ex^{*}\for^{*}\ex^{*}\for^{*}$.
Whether $U(x)$ halts or not is undecidable, and so is $\phi_{x}$.
A direct count shows that $\Member$ appears at most $200$ times in $\phi_{x}$.
From the last estimate in the proof of Lemma \ref{lem:ea}, we see that
$\phi_{x}$ is at most a $\ex^{k} \for^{k} \ex^{k} \for^{k}$ sentence, where
$k = 20000$.  This completes the proof. \ $\sq$

\bigskip

\section{Final remarks and open problems} \label{s:fin-rem}

\subsection{}\label{ss:finrem-open}
Comparing Theorem~\ref{t:alg-IP} and Theorem~\ref{th:quad-pspace},
we see a big complexity jump by going from one to three
alternating quantifier blocks, even when $\al$ is quadratic.
It is an interesting open problem to determine the complexity of $\al$-PA sentence when $r=2,3$. Here we make the following conjecture:

\begin{conj}
Let $\al$ be non-quadratic. Then $\al$-\textup{PA} sentences with three alternating blocks of quantifiers are undecidable.
\end{conj}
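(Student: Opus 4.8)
\smallskip
\noindent\textbf{Proof proposal.}
The plan is to follow the template of the proof of Theorem~\ref{thm:undtwo}, working again in $(\al,\be)$-PA with $1,\al,\be$ $\Q$-linearly independent (which suffices by the remark after Theorem~\ref{thm:undtwo} since $1,\al,\al^{2}$ are then independent), but to push the alternation count down from four to three by re-engineering both the coding of finite subsets of $\N^{2}$ and the halting sentence $\varphi_{x}$. Recall that in the four-block construction the outermost layer is $\exists(\text{codes of the }A_{i},B_{j})\,\forall(\text{steps }s,t,t')\,[\Psi]$, where $\Psi$ is a Boolean combination of $\Member$-atoms, and that $\Member$ is only $\exists^{*}\forall^{*}$-definable (Lemma~\ref{lem:ea}); plugging a $\Sigma_{2}$ predicate under the step-$\forall$ into a formula using both polarities is exactly what forces a fourth block. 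To reach a sentence of the shape $\exists^{*}\forall^{*}\exists^{*}$ one must arrange that, underneath the step-$\forall$, the transition-verification body is $\Sigma_{1}$, i.e.\ expressible by existential quantifiers followed by a quantifier-free matrix.

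The first step I would take is to eliminate all \emph{negative} occurrences of the membership predicate. Instead of coding the computation by relations $A_{i},B_{j}\subseteq\N^{2}$ (which forces one to assert ``the $A_{i}$ are pairwise disjoint'', ``at most one state per step'', ``$(s,t)\notin A_{i}$ for all $i$'', etc.), code it by the \emph{functions} $\sigma:\N\to\{1,\dots,8\}$ (current state), $\pi:\N\to\N$ (head position) and $\tau:\N^{2}\to\{1,\dots,4\}$ (tape contents). Over a halting run each of these has finite graph, hence is coded by a tuple of naturals exactly as in Theorem~\ref{thm:codepairs} (one only needs the mild generalization of coding finite subsets of $\N^{3}$, or of pairing first). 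With a predicate $\mathrm{Val}(\mathbf{F},a,b)$ meaning ``the coded function $\mathbf{F}$ takes value $b$ at $a$'', the consistency conditions from the list a)--f) become, with a little care, a \emph{positive} Boolean combination of $\mathrm{Val}$-atoms: uniqueness of the state is automatic because $\sigma$ is a function, the input condition e) is asserted directly as $\mathrm{Val}(\tau,(0,t),j_{t})$ for the known $j_{t}$ and $\mathrm{Val}(\tau,(0,t),1)$ for $t>n$, and an inequality such as $\tau(s,t)\neq\sigma_{j}$ becomes the finite positive disjunction $\bigvee_{j'\neq j}\mathrm{Val}(\tau,(s,t),j')$. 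After this step $\varphi_{x}$ has the form $\exists(\text{function codes},u)\,\forall(s,t)\,[\text{positive Boolean combination of }\mathrm{Val}]$, so the whole sentence collapses to $\exists^{*}\forall^{*}\exists^{*}$ \emph{provided $\mathrm{Val}$ can be made $\exists^{*}$-definable}.

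This reduces the conjecture to a clean statement: for $1,\al,\be$ $\Q$-linearly independent there is a uniform $\exists^{*}$ $(\al,\be)$-PA definition of membership in an arbitrary finite subset of $\N^{2}$ --- equivalently, a $\Sigma_{1}$ \emph{addressing} scheme that, given $(s,t)$, locates in the $\al$-Ostrowski representation the digit encoding $(s,t)$ and reads it off. The reading is already existential (the relation $v\in\Ost(X)$ is $\exists^{3}$-definable, see \eqref{eq:In_ex}); the obstruction is the \emph{navigation}. In the quadratic case periodicity gives a closed form for $q_{n}$, but for non-quadratic $\al$ the paper's solution is the $\Best$/$\Adm$ machinery built on the $f_{\al}$-versus-$f_{\be}$ injectivity of Lemma~\ref{lem:injectivity}, and $\Best$ is genuinely $\exists^{5}\forall^{4}$ (Lemma~\ref{lem:Best}): it asserts that a point is the $g$-minimizer over a \emph{bounded box} $\N_{\le d}\times\N_{\le e}$, and that bounded universal quantifier, while harmless for the arithmetical hierarchy, costs a full quantifier block in the prenex count since the bound $d=q_{n}$ is itself a variable. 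Hence the main obstacle --- and the reason the statement is only conjectured --- is to carry out Ostrowski-navigation by a purely existential formula when $\al$ is non-quadratic: either by replacing the minimization in $\Best$ with an existential ``witness of separation'' certifying that a candidate convergent is the correct address (exploiting more of the metric structure of $\{f_{\al}(n)\}$ and $\{f_{\be}(n)\}$ than the brute minimization over all competitors), or by an entirely different encoding of finite sets in which position-lookup is $\Sigma_{1}$ by design. If such a $\Sigma_{1}$ $\Member$-type predicate can be produced, undecidability of $\exists^{K}\forall^{K}\exists^{K}$ $\al$-PA sentences follows verbatim from the reduction above together with the undecidability of the halting problem; if instead one could show that no such predicate exists and that three-block $\al$-PA is in fact decidable, that would refute the conjecture.
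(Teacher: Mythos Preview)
The statement you address is stated in the paper as a \emph{conjecture} (Section~\ref{s:fin-rem}), not a theorem; the paper gives no proof and explicitly lists it as open. Your proposal is likewise not a proof, and you say so: you reduce the conjecture to the existence of a purely existential $(\al,\be)$-PA definition of membership in a coded finite subset of $\N^{2}$, and then correctly identify that the paper's $\Best/\Adm$ machinery does not supply one because the bounded minimization in $\Best$ (Lemma~\ref{lem:Best}) contributes an irreducible $\forall$-block. This diagnosis matches the paper's own view of the obstruction, so there is nothing to compare against --- you and the paper agree that the key step is missing.

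One technical point in your reduction is understated. You assert that recoding the run by functions $\sigma,\pi,\tau$ makes ``uniqueness automatic'', eliminating all negative $\Member$-atoms. But Theorem~\ref{thm:codepairs} codes arbitrary finite \emph{sets}: the outer existential quantifier may pick a tuple $\mathbf{X}$ whose decoded relation is neither functional nor total. The forward direction is fine (witness with the genuine run), but for the backward direction you must argue that any tuple satisfying your purely positive body already forces a halting computation; a non-functional ``$\sigma$'' could in principle satisfy all positive transition clauses without corresponding to a real run. This is typically repairable by transcript-style local-consistency coding as in Section~\ref{sec:pspace}, but it should be argued, and in any case it does not touch the real obstacle --- the missing $\Sigma_{1}$ Ostrowski lookup --- which is exactly why the statement remains a conjecture.
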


\noindent Similarly, when $\al$ is quadratic we make the following conjecture:

\begin{conj}
Let $\al$ be quadratic. Then deciding $\al$-\textup{PA} sentences with two alternating blocks of quantifiers and a fixed number of variables and inequalities is \NP-hard.
\end{conj}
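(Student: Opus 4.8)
The plan is to mimic the $\pspace$-hardness argument of Section~\ref{sec:pspace}, but for a \emph{nondeterministic polynomial-time} computation, so that the witness can be absorbed into a single outer $\ex$-block and only one quantifier alternation survives. Fix a quadratic $\al$, reduce it (as in Section~\ref{sec:periodic}) to its purely periodic form with minimal period $k$, and set $M=\lcm(2,k)$. Fix once and for all an $\NP$-complete language $\L$ together with a fixed nondeterministic $1$-tape Turing machine $N$ deciding it in time $p(n)$. Given $x$ with $|x|=n$, membership $x\in\L$ is equivalent to the existence of an accepting tableau of $N$ on $x$: a $p(n)\times p(n)$ array over a fixed alphabet $\B$ (states coded as in Section~\ref{sec:pspace}) whose first row is the initial configuration built from $x$, one of whose rows is accepting, and each of whose cells equals the value forced by the three cells immediately above it. I would encode such a tableau as a single (possibly enormous, but only \emph{existentially quantified}, hence harmless) $T\in\N$ of the $\ZeroOne$-form of Section~\ref{sec:tools}, with the cell at row $i$, column $j$ occupying the block of Ostrowski digits starting at $q_{(\lambda i+j)M}$ for $\lambda:=p(n)$. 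The one structural point needed is that ``move one row down'' is the shift $q_{tM}\mapsto q_{(t+\lambda)M}$, and since $\lambda=\polyin(n)$ the coefficients $c_\lambda,d_\lambda$ with $q_{(t+\lambda)M}=c_\lambda v+d_\lambda v'$ (Fact~\ref{fact:div_k}) have $\polyin(n)$ bit-length, so they can be fed in as ordinary constants.

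The candidate sentence has the shape
\[
\ex T\;\;\bigl[\,\ZeroOne(T)\ \land\ \text{InitRow}_x(T)\ \land\ \text{SomeAcceptRow}(T)\ \land\ \text{Consistent}(T)\,\bigr],
\]
where $\text{InitRow}_x$ pins the first $\lambda$ blocks of $T$ to $\encode{Nx}$ (constructible in $\polyin(n)$ time exactly as $\Tape'_1(x)$), $\text{SomeAcceptRow}$ is an $\ex$-formula about a single block (its quantifiers merge into the outer $\ex T$), and $\text{Consistent}(T)$ is the universal statement: for every convergent $v=q_{tM}$ marking a non-first-row cell with $v\le T$, the cell at $v$ is $f$ of the three cells one row back --- literally the disjunction $\bigvee_{B,B',B''\in\B}\bigl(\Read^{B,B',B''}\wedge\Next^{B,B',B''}\bigr)$ of Section~\ref{sec:pspace_proof}, with ``shift by $\lambda M$'' now meaning ``one row up''. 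The resulting prenex form is $\ex^{*}\for^{*}\ex^{*}$ --- three blocks --- purely because of the inner $\ex Z_1,Z_2,Z_3$ inside $\Read$ and $\Next$. The whole game is to remove that inner existential.

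The natural move is to recast each extraction ``$\ex\,\vec Z$: $\vec Z$ codes the \emph{canonical} digit-split of $T$ at position $\star$, and the extracted block has the right pattern'' as ``$\for\,\vec Z$: if $\vec Z$ codes the canonical split at $\star$, then the extracted block has the right pattern'': since the clean concatenation of $\Comp$ (Fact~\ref{fact:Compatible}) is \emph{unique}, the two are logically equivalent \emph{provided the canonical split exists}. The split predicate is itself a Boolean combination of linear inequalities together with $\After$ (also linear), so this rewriting keeps the inequality count bounded, turns $\Read,\Next$ into universal formulas, and lets their universal variables merge into the $\for$-block of $\text{Consistent}$; prenexing then yields an $\ex^{O(1)}\for^{O(1)}$ $\al$-PA sentence with $O(1)$ inequalities (coefficients of length $\polyin(n)$, which is permitted) that holds iff $x\in\L$, giving the claimed $\NP$-hardness. \textbf{The hard part}, and presumably the reason this is still a conjecture, is exactly the italicised proviso: the canonical split of $T$ at a given position exists only when $T$ already has the expected digit pattern near that position, so the universal recasting is faithful only if that existence is separately guaranteed --- ideally forced by $\ZeroOne(T)$ together with the surrounding conjuncts, but this must be checked position-by-position and may itself reintroduce an existential quantifier; note also that $Y=\restr{X}{d}{\al}$ is only presented in an $\ex^{2}$ form in~\eqref{eq:restr}, so one needs a genuine $\for$-form for it. If the recasting cannot be pushed through cleanly, the fallback is a direct reduction from $3$-\textsc{Sat}: encode a candidate assignment as a $\ZeroOne$-number as the outer $\ex$-witness, and let $\text{Consistent}$ become ``every clause of the input-encoded formula is satisfied'', a $\for$ over clause-marking convergents; now the obstruction migrates to expressing, with a bounded quantifier-free matrix and \emph{no} new alternation, the variable-lookup matching a variable index stored inside a clause block to the corresponding bit of the assignment --- which one would try to defuse by storing in the input constant a pre-aligned, interleaved copy of the occurrence structure so that every lookup becomes a fixed-offset $\Read$-type operation. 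Either route reduces the conjecture to a delicate but finite piece of prenex engineering around the Ostrowski extraction formulas.
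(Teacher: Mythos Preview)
The statement you are addressing is listed in the paper as a \emph{Conjecture} (Section~\ref{ss:finrem-open}), not as a theorem; the paper offers no proof and explicitly treats it as open. So there is nothing to compare your attempt against, and you yourself flag that your write-up is a strategy with an unresolved obstruction rather than a proof.

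On the content of the strategy: your reduction outline is the natural one, and you have correctly located the bottleneck. Two remarks may sharpen it. First, the inner $\ex$ you are trying to kill is not only inside $\Read$/$\Next$: $\ZeroOne$ is itself $\for^{4}\ex^{3}$ (see~\eqref{eq:Onlyk}), so $\ex T\,\ZeroOne(T)\wedge\cdots$ already has shape $\ex\for\ex$ before you even touch $\text{Consistent}$. Any ``uniqueness $\Rightarrow$ replace $\ex$ by $\for$'' trick must therefore be applied to $\ZeroOne$ (and to $\Pref$/$\text{InitRow}$) as well, and the vacuity issue you identify---the universal rewrite is faithful only if the witnessing split is guaranteed to exist---bites in exactly the same way there. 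Second, in the big disjunction $\bigvee_{B,B',B''}\Tran^{B,B',B''}$ the witnesses $Z_{1},Z_{2},Z_{3}$ in~\eqref{eq:Read}--\eqref{eq:Next} are \emph{pattern-dependent} (the constraint $0\le Z_{1}-(av+bv')<q_{\ast}$ bakes the target pattern into $Z_{1}$). A disjunction of universals is not a universal, so to make the $\ex\to\for$ rewrite go through you must first refactor to a \emph{single}, pattern-independent canonical split at the fixed position and push the pattern test into a quantifier-free condition on that split; only then does uniqueness give you one $\for$-block. Whether that refactoring can be done with a bounded number of linear inequalities over $\Z[\al]$---in particular, whether ``$Y=\restr{X}{d}{\alpha}$'' admits a $\for$-definition and not just the $\ex^{2}$ one displayed after~\eqref{eq:restr}---is precisely the missing lemma, and until it (or your $3$-\textsc{Sat} fallback with pre-aligned lookup tables) is established, the argument remains a plausible plan rather than a proof.
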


\noindent We note that $\ex^{*}\for^{*}$ $\sqrt{5}$-PA sentences can already express non-trivial questions,
such as the following: \ts \emph{Given $a,b \in \Z$, decide whether there is a Fibonacci number $F_{n}$
congruent to $a$ modulo $b$?}  Note that the sequence $\{F_n~\text{mod}~b\}$ is periodic with period $O(b)$,
called the \emph{Pisano period}.  These periods were introduced by Lagrange and heavily studied
in number theory (see e.g.~\cite[$\S$29]{Sil}), but the question above is likely computationally
hard.

\subsection{} \label{ss:finrem-KP}
The main theorem by Khachiyan and Porkolab in~\cite{KP}
is the following general integer optimization result
on convex semialgebraic sets.

\begin{thmC}[\cite{KP}]\label{th:KP}
Consider a first order formula $F(\y)$ over the reals of the form:
\begin{equation*}
\y \in \R^{k}  : Q_{1}\ts\x_{1} \in \R^{n_{1}} \; \dots \; Q_{m}\ts\x_{m} \in \R^{n_{m}} \; P(\y,\x_{1},\dots,\x_{m}),
\end{equation*}
where $P(\y,\x_{1},\dots,\x_{w})$ is a Boolean combination of equalities/inequalities of the form $$g_{i}(\y,\x_{1},\dots,\x_{w}) \; *_{i} \; 0$$ with $*_{i} \in \{>,<,=\}$ and $g_{i} \in \Z[\y,\x_{1},\dots,\x_{w}]$.
Let $k,m,n_{1},\dots,n_{m}$ be fixed, and suppose that the set
$$S_{F} \. \coloneqq \. \{\y \in \rr^{n} : F(\y)=\text{\rm true}\}$$
is convex.  Then we can either decide in polynomial time that $S_{F} \cap \Z^{k} = \varnothing$,
or produce in polynomial time some $\y \in S_{F} \cap \Z^{k}$.
\end{thmC}

\noindent This immediately implies Theorem~\ref{t:alg-IP}.
Here there is no restriction on the number of $g_{i}$'s and their degrees.
The coefficients of $g_{i}$'s are encoded in binary. Note that convexity is crucially important in the theorem.
In Manders and Adleman~\cite{MA}, it is shown that given $a,b,c \in \zz$,
deciding $\ex \y \in \N^{2} \, : \, ay_{1}^{2} + by_{2} + c = 0$ is
$\NP$-complete. Here the semialgebraic set
\begin{equation*}
\bigl\{\y \in \rr^{2} \; : \; 0 \le ay_{1}^{2} + by_{2} + c < 1\bigr\}
\end{equation*}
is not necessarily convex.

\section*{Acknowledgements}
We are grateful to Matthias Aschenbrenner, Sasha Barvinok, Tristram Bogart, Art\"{e}m Chernikov,
Jes\'{u}s De Loera, Fritz Eisenbrand, John Goodrick, Robert Hildebrand, Ravi Kannan,
Matthias K\"oppe, and Kevin Woods for interesting conversations and helpful remarks. We thank Eion Blanchard, Madie Farris and Ran Ji for closely reading an earlier draft of this manuscript.
This work was finished while the last two authors were in residence
of the MSRI long term Combinatorics program in the Fall of 2017;
we thank MSRI for the hospitality.  The first author was partially supported by NSF Grant DMS-1654725.
The second author
was partially supported by the UCLA Dissertation Year Fellowship.
The third author was partially supported by the~NSF.

\vskip.82cm


\begin{thebibliography}{21132}\label{refpage}

\bibitem[Bar]{B2}
A.~Barvinok,
The complexity of generating functions for integer points in polyhedra and beyond,
in \emph{Proc.~ICM}, Vol.~3, EMS, Z\"urich, 2006, 763--787.


\bibitem[Ber]{Ber}
L.~Berman, The complexity of logical theories,
\emph{Theoret.\ Comput.\ Sci.}~\textbf{11} (1980), 71--77.

\bibitem[Coo]{C}
D.~C.~Cooper,
Theorem proving in arithmetic without multiplication,
in \emph{Machine Intelligence} (B.~Meltzer and D.~Michie, eds.),
Edinburgh Univ.\ Press, 1972, 91--99.



\bibitem[FR]{FR}
M.~J.~Fischer and M.~O.~Rabin,
Super-Exponential Complexity of Presburger Arithmetic, in
\emph{Proc.\ SIAM-AMS Symposium in Applied Mathematics}, AMS,
Providence, RI, 1974, 27--41.

\bibitem[F\"{u}r]{Fur}
M.~F\"{u}rer,
The complexity of Presburger arithmetic with bounded quantifier alternation depth,
\emph{Theoret.\ Comput.\ Sci.}~\textbf{18} (1982), 105--111.


\bibitem[Gr\"{a}]{Gra}
E.~Gr\"{a}del,
\emph{The complexity of subclasses of logical theories}, Ph.D.~thesis,
Universit\"{a}t Basel, 1987.



\bibitem[H1]{H2}
P.~Hieronymi,
When is scalar multiplication decidable, 
\emph{Ann.\ Pure Appl.\ Logic}~{\bf 170} (2019), 1162--1175. 

\bibitem[H2]{H}
P.~Hieronymi,
Expansions of the ordered additive group of real numbers by two discrete subgroups,
\emph{J.~Symb.\ Log.}~{\bf 81} (2016), 1007--1027.



\bibitem[HTe]{HT}
P.~Hieronymi and A.~Terry~Jr.,
Ostrowski Numeration Systems, Addition, and Finite Automata,
\emph{Notre Dame J.\ Form.\ Log.}~\textbf{59} (2018), 215--232.



\bibitem[HTy]{HT-Proj}
P.~Hieronymi and M.~Tychonievich,
Interpreting the projective hierarchy in expansions of the real line,
\emph{Proc.\ Amer.\ Math.\ Soc.}~{\bf 142} (2014), 3259--3267.


\bibitem[HUM]{HUM}
J.~E.~Hopcroft, J.~Ullman and R.~Motwani,
\emph{Introduction to automata theory, languages, and computation} (3rd~ed.),
Addison-Wesley, 2006.


\bibitem[Kan]{K1}
R.~Kannan,
Test sets for integer programs, $\forall\ts\exists$ sentences,
in \emph{Polyhedral Combinatorics}, AMS, Providence, RI, 1990,
39--47.



\bibitem[KP]{KP}
L.~Khachiyan and L.~Porkolab,
Integer optimization on convex semialgebraic sets,
\emph{Discrete Comput.\ Geom.}~{\bf 23} (2000), 207--224.


\bibitem[KN]{automata}
B.~Khoussainov and A.~Nerode,
\emph{Automata Theory and its Applications}, 
Birkh{\"a}user, Boston, MA, 2001, 430~pp.


\bibitem[Len]{L}
H.~Lenstra,
Integer programming with a fixed number of variables,
\emph{Math.\ Oper.\ Res.}~\textbf{8} (1983), 538--548.

\bibitem[MA]{MA}
K.~Manders and L.~Adleman,
$\NP$-complete decision problems for binary quadratics,
\emph{J.~Comput.\ System Sci.}~{\bf 16} (1978), 168--184.


\bibitem[Mey]{M}
A.~Meyer,
Weak monadic second order theory of succesor is not elementary-recursive,
in \emph{Lecture Notes in Math.}~\textbf{453}, Springer, Berlin, 1975, 132--154.



\bibitem[Mil]{ivp}
C.~Miller,
Expansions of dense linear orders with the intermediate value property,
{\em J.~Symbolic Logic}~\textbf{66} (2001), 1783--1790.


\bibitem[NW]{NW}
  T.~Neary and D.~Woods,
  Small fast universal Turing machines,
\emph{Theor.\ Comput.\ Sci.}~\textbf{362} (2006), 171--195.

\bibitem[NP]{NP}
D.~Nguyen and I.~Pak,
Short Presburger arithmetic is hard, to appear in {\em SIAM Jour.\ Comp.}; 
extended abstract in \emph{58-th Proc.\ FOCS}, Los Alamitos, CA, 2017, 37--48; 
\texttt{arXiv:1708.08179}.


\bibitem[Opp]{Oppen}
D.~C.~Oppen,
A $2^{2^{2^{pn}}}$ upper bound on the complexity of Presburger arithmetic,
\emph{J.\ Comput.\ System Sci.}~\textbf{16} (1978), 323--332.


\bibitem[Ost]{Ost}
A.~Ostrowski,
 Bemerkungen zur {T}heorie der {D}iophantischen {A}pproximationen (in German),
 {\em Abh.\ Math.\ Sem.\ Univ.\ Hamburg}~\textbf{1} (1922), 77--98.


\bibitem[Pre]{Pres}
M.~Presburger, \"{U}ber die Vollst\"{a}ndigkeit eines gewissen Systems der Arithmetik ganzer Zahlen,
in welchem die Addition als einzige Operation hervortritt (in German),
in \emph{Comptes Rendus du I congr\`{e}s de Math\'{e}maticiens des Pays Slaves},
Warszawa, 1929, 92--101.




\bibitem[RL]{RL}
C.~R.~Reddy and D.~W.~Loveland,
Presburger arithmetic with bounded quantifier alternation,
in \emph{Proc.\ 10th STOC}, ACM, 1978, 320-325.

\bibitem[Rei]{GTW}
K.~Reinhardt, The complexity of translating logic to finite automata, 
in \emph{Automata, Logics, and Infinite Games. A Guide
to Current Research}, 
    Springer, Berlin, 2002, 231--238.


\bibitem[RS]{RS}
  A.~M.~Rockett and P.~Sz\"{u}sz,
  \emph{Continued fractions}, World Sci., River Edge, NJ, 1992.

\bibitem[Sca]{Sca}
B.~Scarpellini,
Complexity of subcases of Presburger arithmetic,
\emph{Trans.\ AMS}~\textbf{284} (1984), 203--218.

\bibitem[Schr]{Schrijver}
A.~Schrijver,
\emph{Theory of linear and integer programming},
John Wiley, Chichester, 1986.

\bibitem[Sch\"{o}]{Sch}
U.~Sch\"{o}ning,
Complexity of Presburger arithmetic with fixed quantifier dimension,
\emph{Theory Comput.\ Syst.}~\textbf{30} (1997), 423--428.

\bibitem[Sil]{Sil}
J.~H.~Silverman,
\emph{A Friendly Introduction to Number Theory}, Pearson, 2011.


\bibitem[Sko]{skolem}
T.~Skolem,
{\"U}ber einige {S}atzfunktionen in der {A}rithmetik (in German), in
{\em Skr.\ Norske Vidensk.\ Akad., Oslo,
Math.-naturwiss.\ Kl.}~\textbf{7} (1931), 1--28.


\bibitem[Sto]{S}
 L.~Stockmeyer,
 \emph{The Complexity of Decision Problems in Automata Theory and Logic},
 Ph.D.\ thesis, MIT, 1974, 224 pp.



\bibitem[Tho]{Thomas}
W.~Thomas, Finite automata and the analysis of infinite transition systems,
in {\em Modern Applications of Automata Theory}, World Sci.,
Hackensack, NJ, 2012, 495--527.


\bibitem[W1]{Wei}
V.~D.~Weispfenning,
Complexity and uniformity of elimination in Presburger arithmetic,
in \emph{Proc.\ 1997 ISSAC}, ACM, New York, 1997, 48--53.

\bibitem[W2]{weis}
V.~D.~Weispfenning,
Mixed real--integer linear quantifier elimination, in
\emph{Proc.\ 1999 ISSAC}, ACM, New York, 1999, 129--136.


\bibitem[Woo]{Woods}
K.~Woods,
\emph{Rational Generating Functions and Lattice Point Sets},
Ph.D.~thesis, University of Michigan, 2004, 112~pp.


\end{thebibliography}
\end{document}